\numberwithin{equation}{section}
\theoremstyle{plain}
\newaliascnt{theorem}{equation}
\newtheorem{thm}[theorem]{Theorem}
\newaliascnt{prop}{equation}
\newtheorem{prop}[prop]{Proposition}
\newaliascnt{lemma}{equation}
\newtheorem{lemma}[lemma]{Lemma}
\newaliascnt{corollary}{equation}
\newtheorem{corollary}[corollary]{Corollary}
\theoremstyle{definition}
\newaliascnt{remark}{equation}
\newtheorem{remark}[remark]{Remark}
\newaliascnt{defn}{equation}
\newtheorem{defn}[defn]{Definition}
\newaliascnt{example}{equation}
\newtheorem{example}[example]{Example}
\crefname{theorem}{Theorem}{Theorems}
\crefname{prop}{Proposition}{Propositions}
\crefname{section}{Section}{Sections}
\crefname{subsection}{Section}{Sections}
\crefname{defn}{Definition}{Definitions}
\crefname{remark}{Remark}{Remarks}
\crefname{lemma}{Lemma}{Lemmas}
\crefname{corollary}{Corollary}{Corollaries}
\crefname{example}{Example}{Examples}
\newcommand{\mb}[1]{\mathbf{#1}}
\DeclareMathOperator{\Ad}{Ad}
\DeclareMathOperator{\Hom}{Hom}
\DeclareMathOperator{\End}{End}
\DeclareMathOperator{\Spec}{Spec}
\newcommand{\wt}{\widetilde}
\newcommand{\op}{\mathrm{op}}
\newcommand{\ot}{\otimes}
\newcommand{\nin}{\notin}
\newcommand{\comments}[1]{}
\newcommand{\ol}[1]{\overline{#1}}
\newcommand{\dol}[1]{\overline{\overline{#1}}}
\def\al{\alpha}
\def\be{\beta}
\def\de{\delta}
\def\De{\Delta}
\def\io{\iota}
\def\la{\lambda}
\def\th{\theta}
\def\si{\sigma}
\def\Si{\Sigma}
\def\om{\omega}
\def\ep{\epsilon}
\def\ze{\zeta}
\def\CA{{\mathcal A}}
\def\CB{{\mathcal B}}
\def\CX{{\mathcal X}}
\def\BA{{\mathbb A}}
\def\BC{{\mathbb C}}
\def\BN{{\mathbb N}}
\def\BP{{\mathbb P}}
\def\BQ{{\mathbb Q}}
\def\BR{{\mathbb R}}
\def\BZ{{\mathbb Z}}
\newcommand{\fg}{\mathfrak g}
\def\hh{{\mathfrak h}}
\begin{document}

\title{Cluster Ensembles and Kac-Moody Groups}
\author{Harold Williams}
\address{Harold Williams\newline
University of California, Berkeley\newline
Department of Mathematics\newline
Berkeley CA 94720\newline
USA}
\email{harold@math.berkeley.edu}


\begin{abstract}
We study the relationship between two sets of coordinates on a double Bruhat cell, the cluster variables introduced by Berenstein, Fomin, and Zelevinsky and the $\CX$-coordinates defined by the coweight parametrization of Fock and Goncharov.  In these coordinates, we show that the generalized Chamber Ansatz of Fomin and Zelevinsky is a nondegenerate version of the canonical monomial transformation between the cluster variables and $\CX$-coordinates defined by a common exchange matrix.  We prove this in the setting of an arbitrary symmetrizable Kac-Moody group, generalizing along the way many previous results on the double Bruhat cells of a semisimple algebraic group.  In particular, we construct an upper cluster algebra structure on the coordinate ring of any double Bruhat cell in a symmetrizable Kac-Moody group, proving a conjecture of Berenstein, Fomin, and Zelevinsky. 
\end{abstract}

\maketitle

\section{Introduction}

Cluster algebras were discovered by Fomin and Zelevinsky in the context of dual canonical bases and total positivity in semisimple algebraic groups.  Their formulation was based in part on identities satisfied by generalized minors encountered in the study of double Bruhat cells \cite{Fomin1999}.  These minors were used to write explicit formulas for the inverses of certain birational parametrizations of these cells, generalizing the Chamber Ansatz previously introduced in the context of unipotent cells \cite{Berenstein1996,Berenstein1997}.  After the axiomatization of cluster algebras in \cite{Fomin2001}, these generalized minors were reinterpreted as cluster variables in an upper cluster algebra structure on the coordinate ring of the double Bruhat cell \cite{Berenstein2005}.  In the present article we extend this family of results to the more general setting of symmetrizable Kac-Moody groups, and in particular construct the corresponding cluster algebras.  Furthermore, we show that the generalized Chamber Ansatz is a component of a larger cluster ensemble formed by the simply-connected and adjoint forms of the double Bruhat cell.

The structure of a cluster algebra is encoded in the combinatorial datum of an exchange matrix.  From such a matrix others may be produced by an iterative process of mutation, and the cluster algebra is determined by the collection of all matrices obtained in this way.  Soon after \cite{Fomin2001} it was discovered that the dynamics of mutations encode a second type of algebraic structure, variously called coefficients or $Y$-variables \cite{Fomin2006}, $\tau$-coordinates \cite{Gekhtman2002}, and $\CX$-coordinates \cite{Fock2003}.  In \cite{Fock2006} a class of such coordinates were constructed on the double Bruhat cells of the adjoint form of a semisimple algebraic group.  These are given by another family of birational parametrizations of the cell, related to those studied in \cite{Fomin1999} but defined in terms of coweight subgroups rather than one-parameter unipotent subgroups.  However, the relationship between these $\CX$-coordinates and the cluster variables of \cite{Berenstein2005} was not studied explicitly.

In general, the cluster algebra and $\CX$-coordinates encoded by a common exchange matrix are related by a canonical map, defined abstractly as a Laurent monomial transformation whose exponents are the entries of the exchange matrix.   Concrete instances of this include the projection from decorated Teichm\"{u}ller space to Teichm\"{u}ller space \cite{Fock2005} and the transformation of $T$-system solutions to corresponding $Y$-system solutions \cite{Kuniba2011}.  It was first defined in terms of exchange matrices in the study of compatible Poisson structures on a cluster algebra \cite{Gekhtman2002}, and in \cite{Fomin2006} played a key role in the derivation of universal formulas for cluster variables in terms of $F$-polynomials.  Following the terminology of \cite{Fock2003} we refer to it as the cluster ensemble map; one of our main results is that the generalized Chamber Ansatz of \cite{Fomin1999}, when expressed in terms of the coweight parametrization of a double Bruhat cell, is a certain nondegenerate version of this structure (see \cref{thm:ensthm}).  In particular, this change of variables turns the initially opaque formulas of \cite{Fomin1999} into ones whose form is completely intuitive from the perspective of the general theory.

Our broader goal is to extend the constructions of \cite{Fomin1999,Berenstein2005,Fock2006} to the setting of arbitrary symmetrizable Kac-Moody groups.  These groups share many structural properties with semisimple algebraic groups, in particular a decomposition into finite-dimensional double Bruhat cells.  We show that the coordinate rings of all such double Bruhat cells are upper cluster algebras, verifying a conjecture of \cite{Berenstein2005} (see \cref{thm:clusterthm}).  For the unipotent cells of a Kac-Moody group, this cluster algebra structure was described in \cite{Geiss2010,Demonet2011}.  Their treatment is based on the representation theory of preprojective algebras, and provides categorical interpretations of many earlier group-theoretic constructions \cite{Geiss2012}.  However, at the present time this framework does not extend beyond unipotent cells; instead, we adapt the techniques of \cite{Fomin1999,Zelevinsky2000,Berenstein2005} to the infinite-dimensional case.

Whereas cluster variables are motivated by the theory of canonical bases, $\CX$-coordinates are more natural from the perspective of Poisson geometry.  In particular, an exchange matrix endows the corresponding $\CX$-coordinates with a canonical Poisson bracket, which in the case of double Bruhat cells coincides with the Sklyanin bracket.  The characters of the group restrict to Poisson-commuting functions on the double Bruhat cell, and in some cases form a completely integrable system \cite{Hoffmann2000,Reshetikhin2003}.  Many interesting examples come from non-unipotent cells in affine Kac-Moody groups, and this is one of our main motivations for studying double Bruhat cells in this generality \cite{Williams2012,Marshakov2012}.  Moreover, this context calls specific attention to role of the coweight parametrization, in that the resulting $\CX$-coordinates provide the link between these systems and those constructed from the dimer partition function of a bipartite torus graph \cite{Fock2012,Goncharov2011}.  

The layout of the paper is as follows. In \cref{sec:kmgroups} we recall the necessary background on Kac-Moody groups and discuss their generalized minors.  In \cref{sec:dbc} we study various coordinate systems on the double Bruhat cells of such groups.  In particular, we generalize the Chamber Ansatz of \cite{Fomin1999} to the Kac-Moody case, and derive the analogous formula for the coweight parametrization of \cite{Fock2006}.  From the latter we recover the exchange matrix defined in \cite{Berenstein2005}, and in \cref{sec:clusterdbc} we consider the corresponding cluster structures associated with the double Bruhat cell, summarizing the main results in \cref{thm:ensthm}.

\textsc{Acknowledgements}  I would like to generously thank Bernard Leclerc, Vladimir Fock, Nicolai Reshetikhin, Lauren Williams, Pablo Solis, and Qi You for valuable discussions and comments.  A particular gratitude is owed to the late Andrei Zelevinsky for a number of useful and encouraging remarks.  This research was supported by NSF grant DMS-0943745 and the Centre for Quantum Geometry of Moduli Spaces at Aarhus University.

\section{Kac-Moody Groups and Generalized Minors}\label{sec:kmgroups}

\subsection{Kac-Moody Algebras}\label{sec:KMalg} 
We briefly recall the theory of Kac-Moody algebras \cite{Kac1994}.  A generalized Cartan matrix $C$ is an $r \times r$ integer matrix such that
\begin{enumerate}
\item $C_{ii} = 2$ for all $1 \leq i \leq r$
\item $C_{ij} \leq 0$ for $i \neq j$
\item $C_{ij} = 0$ if and only if $C_{ji} = 0$.
\end{enumerate}
We will assume throughout that $C$ is symmetrizable; that is, there exist positive integers $d_1,\dots,d_r$ such that $d_i C_{ij} = d_j C_{ji}$ for all $1 \leq i,j \leq r$.  To the matrix $C$ is associated a Lie algebra $\fg:=\fg(C)$.  The Cartan subalgebra $\hh \subset \fg$ contains simple coroots $\{ \al^\vee_1, \dots , \al^\vee_r\}$, its dual contains simple roots $\{ \al_1, \dots, \al_r\}$, and these satisfy $\langle \al_j | \al_i^\vee \rangle = C_{ij}$.  The dimension of $\hh$, which we denote throughout by $\wt{r}$, is equal to $2r - \text{rank}(C)$.

The algebra $\fg$ is generated by $\hh$ and the Chevalley generators $\{ e_1, f_1, \dots, e_r, f_r\}$, subject to the relations
\begin{enumerate}
\item $[h,h'] = 0$ for all $h, h' \in \hh$
\item $[h,e_i] = \langle \al_i | h \rangle e_i$
\item $[h,f_i] = - \langle \al_i | h \rangle f_i$
\item $[e_i,f_i] = \al^\vee_i$
\item $[e_i,f_j] = ad(e_i)^{1-C_{ij}}e_j = ad(f_i)^{1-C_{ij}}f_j = 0$ for all $i \neq j$.
\end{enumerate}

The roots of $\fg$ are the elements $\al \in \hh^*$ such that
\[
\fg_{\al} =  \{X \in \fg\: |\: [h,X] = \langle \al|h \rangle X \mathrm{\:\: for\: all\:\:} h \in \hh \}
\]
is nonzero.  Any nonzero root is a sum of simple roots with either all positive or all negative integer coefficients, and we say it is a positive or negative root accordingly.  

The Weyl group $W$ is the subgroup of $\mathrm{Aut}(\hh^*)$ generated by the simple reflections
\[
s_i: \be \mapsto \be - \langle \be|\al^\vee_i \rangle \al_i.
\]
 A nonzero root is said to be real if it is conjugate to a simple root under $W$, and imaginary otherwise.  A reduced word for an element of $W$ is an expression $w = s_{i_1} \cdots s_{i_n}$ such that $n$ is as small as possible; the length $\ell(w)$ is then defined as the length of such a reduced word. 

We fix a complex algebraic torus $H$ with Lie algebra $\hh$, which in the following section will be the Cartan subgroup of the group associated with $\fg$.  The integral weight lattice $P := \mathrm{Hom}(H,\BC^*)$ can be regarded as a sublattice of $\hh^*$, with
\[
\langle \om | \al^\vee_i \rangle \in \BZ
\]
for all $\om \in P$ and all simple coroots $\al^\vee_i$.  We fix once and for all a basis $\{ \om_1,\dots, \om_{\wt{r}} \}$ of $P$, the \emph{fundamental weights}, such that
\[
\langle \om_j | \al^\vee_i \rangle = \de_{i,j}, \quad 1\leq i \leq r, \quad 1 \leq j \leq \wt{r}.
\]
The choice of fundamental weights lets us uniquely define $C_{ij}$ for $r \leq i \leq \wt{r}$ by the requirement that
\begin{equation}\label{eq:extcartan}
\al_j = \sum_{1 \leq i \leq \wt{r}} C_{ij}\om_i.
\end{equation}

Given $a \in H$, we will denote the value of the character $\la \in P$ at $a$ as $a^\la$.  Conversely, given $t \in \BC^*$ and a cocharacter $\la^\vee \in \Hom(\BC^*,H)$, we write $t^{\la^\vee}$ for the corresponding element of $H$.  Having fixed the basis $\om_1,\dots,\om_{\wt{r}}$ of $P$, we have a corresponding dual basis of the cocharacter lattice $\Hom(\BC^*,H)$.  We denote its elements by $\al_1^\vee,\dots,\al_{\wt{r}}^\vee$, since for $i<r$ these are just the coroots of $G$.   

The set of dominant weights is $P_+ := \{\la \in P : \langle \la | \al^\vee_i \rangle \geq 0 \text{ for all } 1 \leq i \leq r\}$.  For each $\la \in P_+$ there is an irreducible $\fg$-representation $L(\la)$ with highest weight $\la$, unique up to isomorphism.  The representation $L(\la)$ is the direct sum of finite-dimensional $\hh$-weight spaces, and its graded dual $L(\la)^\vee$ is an irreducible lowest-weight representation. 

Let $\si$ be the involution of $\fg$ determined by
\begin{equation}\label{eq:si}
\si(h) = -h \text{ for all } h \in H, \quad \si(e_i) = -f_i, \quad \si(f_i) = -e_i,
\end{equation}
and let $\rho_\la: \fg \to \End{L(\la)}$ be the map defining the action of $\fg$ on $L(\la)$.   Then there is a $\fg$-module isomorphism between $L(\la)^\vee$ and the representation whose underlying vector space is $L(\la)$ and whose $\fg$-action is given by $\rho_\la \circ \si$.  In particular this isomorphism yields a nondegenerate symmetric bilinear form
\[
L(\la)\ot L(\la) \cong L(\la)^\vee \ot L(\la) \to \BC.
\]

\subsection{Kac-Moody Groups and Double Bruhat Cells}\label{subsec:KMgroups}

To a generalized Cartan matrix $C$ we may also associate a group $G$, which is a simply-connected complex algebraic group when $C$ is positive-definite \cite{Peterson1983,Kumar2002}.  In general $G$ is an ind-algebraic group, and shares many important properties with the simple algebraic groups, in particular a Bruhat decomposition and generalized Gaussian factorization.

For each real root $\al$, $G$ contains a one-parameter subgroup $x_{\al}(t)$, and $G$ is generated by these together with the Cartan subgroup $H$ (for simple roots, we will write $x_{\pm i}(t) := x_{\pm \al_i}(t)$).  We denote the subgroups generated by the positive and negative real root subgroups by $N_+$ and $N_-$, respectively, and we also have the positive and negative Borel subgroups $B_{\pm} := H \ltimes N_{\pm}$.  

For each $1 \leq i \leq r$ there is a unique embedding $\varphi_i : SL_2 \to G$ such that
\[
\varphi_i \begin{pmatrix} t & 0 \\ 0 & t^{-1} \end{pmatrix} = t^{\al^\vee_i}, \quad  \varphi_i \begin{pmatrix} 1 & t \\ 0 & 1 \end{pmatrix} = x_i(t), \quad  \varphi_i \begin{pmatrix} 1 & 0 \\ t & 1 \end{pmatrix} = x_{-i}(t).
\]
The Weyl group $W$ is isomorphic with $N_G(H)/H$, where $N_G(H)$ is the normalizer of $H$ in $G$.  The simple reflections $s_{i}$ have representatives in $G$ of the form
\begin{align}\label{eqn:simplerootformula}
\ol{s_{i}} = x_{i}(-1) x_{-i}(1) x_{i}(-1) = \varphi_i \begin{pmatrix} 0 & -1 \\ 1 & 0 \end{pmatrix} \\
\dol{s_{i}} = x_{i}(1) x_{-i}(-1) x_{i}(1) = \varphi_i \begin{pmatrix} 0 & 1 \\ -1 & 0 \end{pmatrix}.
\end{align}  
In particular, for any $w \in W$ we have well-defined representatives
\[
\ol{w} = \ol{s_{i_1}} \cdots \ol{s_{i_n}}, \quad \dol{w} = \dol{s_{i_1}} \cdots \dol{s_{i_n}},
\]
where $s_{i_1} \cdots s_{i_n}$ is any reduced word for $w$.  

\begin{prop} \label{prop:gaussian}
\emph{(\cite[6.5.8, 7.4.11]{Kumar2002})} The multiplication map $N_- \times H \times N_+ \to G$ is a biregular isomorphism onto an open subvariety $G_0$.  Thus for any $g \in G_0$ we may write
\[
g = [g]_- [g]_0 [g]_+
\]
for some unique $[g]_{\pm} \in N_{\pm}$ and $[g]_0 \in H$.  Moreover, the maps 
\[
G_0\to N_{\pm} \: (\text{resp. } H), \quad g \mapsto [g]_{\pm} \: (\text{resp. } [g]_0)
\]
are regular.  
\end{prop}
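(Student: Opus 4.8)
The statement is quoted from \cite{Kumar2002}, so I only indicate how one would argue directly. The plan is threefold: first establish uniqueness of the factorization from the group structure alone; then cut out the image $G_0$ as the nonvanishing locus of finitely many matrix coefficients; and finally recover the three factors by explicit formulas in those coefficients, which simultaneously yields regularity of the component maps.

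I would begin with \emph{uniqueness}, which rests only on the structural facts $B_\pm = H \ltimes N_\pm$ and $B_+ \cap B_- = H$. Suppose $n_- h n_+ = n_-' h' n_+'$ with $n_\pm, n_\pm' \in N_\pm$ and $h, h' \in H$. Rearranging gives
\[
(n_-')^{-1} n_- h = h' \, n_+' n_+^{-1},
\]
an element lying in $N_- H \cap H N_+ = B_- \cap B_+ = H$. Calling this common element $h''$, the left-hand expression forces $(n_-')^{-1} n_- = h'' h^{-1} \in N_- \cap H = \{e\}$, hence $n_- = n_-'$, and then $h = h'$ and $n_+ = n_+'$. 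In particular the multiplication map is injective.

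Next I would \emph{cut out the image}. For each fundamental weight $\om_i$ let $v_i \in L(\om_i)$ be a highest weight vector and let $(\cdot,\cdot)$ be the bilinear form of \cref{sec:KMalg}, normalized so $(v_i, v_i) = 1$; recall it pairs distinct $\hh$-weight spaces to zero. Set $\De_i(g) := (v_i, g \cdot v_i)$, a regular function on $G$ since the action on $L(\om_i)$ is. As $N_+$ fixes $v_i$, $H$ scales it by $h^{\om_i}$, and $N_-$ lowers weights (so $n_- v_i = v_i + \text{lower weight terms}$), for $g = n_- h n_+ \in G_0$ one finds
\[
\De_i(g) = \bigl(v_i, \, h^{\om_i}(v_i + \text{lower weight terms})\bigr) = h^{\om_i}.
\]
Thus $G_0$ is contained in the open set $U := \{\, g : \De_i(g) \neq 0 \text{ for all } i \,\}$, and on $G_0$ the torus factor is already pinned down regularly by $[g]_0^{\om_i} = \De_i(g)$, since the $\om_i$ form a basis of $P$.

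The substantive step, and the one I expect to be the main obstacle, is the reverse inclusion $U \subseteq G_0$ together with regularity of $[g]_\pm$. Here I would express the coordinates of the unipotent factors as ratios $(u, g \cdot v_i)/\De_i(g)$ and $(v_i, g \cdot u)/\De_i(g)$ for suitable weight vectors $u$, in the spirit of the classical minor formulas; such ratios are manifestly regular on $U$, and a direct computation would check that the resulting triple inverts multiplication. Surjectivity onto $U$ is most cleanly organized through the Bruhat decomposition: $U$ is exactly the preimage of the open $B_-$-orbit through the base point of the flag ind-variety $G/B_+$, i.e. the big cell, on which the Gaussian factors exist. The genuine difficulties are infinite-dimensional rather than algebraic: $N_+$ is pro-unipotent and $G$ is an ind-scheme, so one must verify that the ratio formulas converge in the appropriate completion and define morphisms of ind-varieties rather than merely set-theoretic maps. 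These convergence and ind-regularity checks — not any single algebraic identity — are where the real content of \cite{Kumar2002} lies.
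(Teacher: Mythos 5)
The paper gives no argument here at all: \cref{prop:gaussian} is quoted verbatim from Kumar, so there is nothing internal to compare your proof against. Judged on its own terms, your sketch is a correct outline of the standard argument, and it dovetails with what the paper itself records elsewhere: your characterization of the image as $\{g : \De^{\om_i}(g) \neq 0 \text{ for all } i\}$ is exactly \cref{prop:G_0eq} (cited from Geiss--Leclerc--Schr\"oer), and your formula $[g]_0^{\om_i} = \De^{\om_i}(g)$ is the defining property of the principal minors in \cref{prop:repint}. The uniqueness step is complete and correct as written ($N_-H \cap HN_+ = B_- \cap B_+ = H$, then $N_- \cap H = \{e\}$), and the inclusion $G_0 \subseteq U$ is likewise a full argument. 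Two small points of precision: the decomposition $G = \bigsqcup_w B_- \dot w B_+$ that you invoke for surjectivity is the \emph{Birkhoff} decomposition rather than the Bruhat decomposition of \cref{prop:Bruhat} (your description of it as the $B_-$-orbit stratification of $G/B_+$ is nevertheless the right one, and the open orbit is indeed $B_-B_+/B_+$); and to conclude $U \subseteq B_-B_+$ from it you should make explicit the weight argument that $\De^{\om_i}(b_-\dot w b_+) = 0$ whenever $w\om_i \neq \om_i$, which forces $w=e$ when all principal minors are nonzero. You are right that the genuine content of the cited result is the ind-variety regularity of $g \mapsto [g]_\pm$, which you correctly identify and defer rather than paper over; since the proposition is a quoted structural fact, that is an acceptable place to stop.
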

\begin{prop}\label{prop:G_0eq}
\emph{(\cite[7.2]{Geiss2010})}
We have
\[
G_0 = \{ x \in G | \De^{\om_j}(x) \neq 0 \text{ for all } 1 \leq j \leq \wt{r} \},
\]
where the $\De^{\om_j}$ are the principal minors of \cref{prop:repint}.
\end{prop}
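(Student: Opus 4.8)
The plan is to prove the two inclusions separately, in each case translating the non-vanishing of a principal minor into a statement about a parabolic coset. Throughout I use that $\De^{\om_j}$ is the matrix coefficient $\De^{\om_j}(g)=\langle g\cdot v_{\om_j},v_{\om_j}\rangle$, where $v_{\om_j}\in L(\om_j)$ is a highest weight vector normalized by $\langle v_{\om_j},v_{\om_j}\rangle=1$ and $\langle\,,\,\rangle$ is the symmetric form of \cref{sec:KMalg}; since that form annihilates pairs of distinct weight vectors, $\De^{\om_j}(g)$ is exactly the coefficient of $v_{\om_j}$ in the weight expansion of $g\cdot v_{\om_j}$. The inclusion $G_0\subseteq\{\,\De^{\om_j}\neq 0\,\}$ is the easy one: writing $g=[g]_-[g]_0[g]_+$ as in \cref{prop:gaussian}, the factor $[g]_+\in N_+$ fixes $v_{\om_j}$, the factor $[g]_0\in H$ scales it by $[g]_0^{\om_j}$, and $[g]_-\in N_-$ adds only vectors of strictly lower weight, so that $\De^{\om_j}(g)=[g]_0^{\om_j}\neq 0$.

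For the reverse inclusion the central step, and the main obstacle, is to identify the non-vanishing locus of each minor with a single parabolic double coset: I would show that $\{\,g\in G:\De^{\om_j}(g)\neq 0\,\}=B_-P_j$, where $P_j\supseteq B_+$ is the maximal parabolic stabilizing the line $\BC v_{\om_j}$ (for $r<j\leq\wt r$ the representation $L(\om_j)$ is one-dimensional, $\De^{\om_j}$ is nowhere zero, and $P_j=G$, so these indices impose no condition). One inclusion is immediate: if $g=b_-p$ with $p\in P_j$ then $g\cdot[v_{\om_j}]=[b_-\cdot v_{\om_j}]$, whose $\om_j$-coefficient is nonzero as before. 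For the converse I would analyze the $B_-$-orbits on the partial flag ind-variety $G/P_j$; these are indexed by the cosets $W/W_j$ with $W_j=\langle s_i : i\neq j\rangle$ the stabilizer of $\om_j$ in $W$, and the orbit attached to $wW_j$ contains the extremal line $[\ol{w}\cdot v_{\om_j}]=[v_{w\om_j}]$. When $w\notin W_j$ one has $w\om_j\neq\om_j$, and because $\om_j$ is the highest weight of $L(\om_j)$ every vector in $B_-\cdot v_{w\om_j}$ has vanishing $\om_j$-coefficient; hence only the open orbit $B_-\cdot[v_{\om_j}]$ meets the chart $\De^{\om_j}\neq 0$, giving the desired equality. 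The delicate points here are this orbit computation and the verification that the Birkhoff stratification and the projections to $G/P_j$ are well behaved in the ind-algebraic setting.

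Granting this, the conclusion is quick. The hypothesis places $x$ in $\bigcap_{j=1}^{\wt r}B_-P_j$. Using the Birkhoff decomposition $G=\bigsqcup_{w\in W}B_-\ol{w}B_+$ \cite{Kumar2002}, write $x\in B_-\ol{w}B_+$ for the unique $w\in W$. Since $B_-P_j$ is invariant under left multiplication by $B_-$ and right multiplication by $P_j\supseteq B_+$, membership $x\in B_-P_j$ depends only on $w$ and is equivalent to $\ol{w}\in B_-P_j$, i.e.\ to $\De^{\om_j}(\ol{w})=\langle\ol{w}\cdot v_{\om_j},v_{\om_j}\rangle\neq 0$, i.e.\ to $w\om_j=\om_j$. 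Thus $w\in\bigcap_{j=1}^{r}W_j$, and since $W_K\cap W_L=W_{K\cap L}$ for standard parabolic subgroups of $W$ this intersection is $W_\emptyset=\{e\}$. Therefore $x\in B_-\ol{e}B_+=B_-B_+=N_-HN_+=G_0$, as required.
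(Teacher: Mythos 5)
Your proof is correct, but there is nothing in the paper to compare it against: the paper does not prove \cref{prop:G_0eq}, it simply quotes it from \cite[7.2]{Geiss2010}. Taken on its own terms, your argument is sound. The easy inclusion is exactly the characterization of $\De^{\om_j}$ on $G_0$ from \cref{prop:repint}; the reverse inclusion correctly reduces, via the Birkhoff decomposition $G=\bigsqcup_{w\in W}B_-\ol{w}B_+$ and the standard facts that $\mathrm{Stab}_W(\om_j)=\langle s_i : i\neq j\rangle$ and that intersections of standard parabolic subgroups of $W$ are standard parabolic, to the triviality of $\bigcap_{1\leq j\leq r}\mathrm{Stab}_W(\om_j)$. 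Your observation that the indices $r<j\leq\wt{r}$ impose no condition, because $L(\om_j)$ is one-dimensional there, is also right and worth making explicit. One structural remark: the step you single out as delicate --- identifying $\{\De^{\om_j}\neq 0\}$ with $B_-P_j$ via the $B_-$-orbit stratification of the ind-variety $G/P_j$ --- can be bypassed entirely. Writing $x=b_-\ol{w}b_+$ directly from the Birkhoff decomposition of $G$, one has $\De^{\om_j}(x)=c\,\langle v_{\om_j}\,|\,b_-\cdot v_{w\om_j}\rangle$ with $c\neq 0$, and this vanishes unless $w\om_j=\om_j$, since $w\om_j\leq\om_j$ for dominant $\om_j$ and $N_-$ only lowers weights. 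Nonvanishing of all $r$ minors then forces $w\in\bigcap_j\mathrm{Stab}_W(\om_j)=\{e\}$, i.e.\ $x\in B_-B_+=G_0$, with no mention of partial flag varieties. This shortcut removes the only genuinely technical point in your write-up and leaves a proof relying solely on the Birkhoff decomposition and elementary highest-weight theory.
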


\begin{prop} \label{prop:Bruhat}
\emph{(\cite[7.4.2]{Kumar2002})} The group $G$ has positive and negative Bruhat decompositions
\[
G = \bigsqcup_{w \in W} B_+ \dot{w} B_+ = \bigsqcup_{w \in W} B_- \dot{w} B_-,
\]
where $\dot{w}$ is any representative of $w$ in $G$.  
\end{prop}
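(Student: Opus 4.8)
The plan is to obtain both decompositions from the Tits system (BN-pair) structure carried by the Kac-Moody group $G$, since a Bruhat decomposition is precisely the output of such a structure by a classical theorem of Bourbaki. First I would verify that $(B_+, N_G(H))$ is a Tits system with Weyl group $W = N_G(H)/H$ and distinguished generators $S = \{s_1, \dots, s_r\}$. Of the axioms, those asserting that $G$ is generated by $B_+$ and $N_G(H)$, that $H = B_+ \cap N_G(H)$ is normal in $N_G(H)$, and that the $s_i$ are involutions generating $W$ are immediate from the presentation of $G$ recalled in \cref{subsec:KMgroups}. The only substantive axioms are the exchange-type relation
\[
\dot{s_i}\, B_+\, \dot{w} \subseteq B_+\, \dot{w}\, B_+ \cup B_+\, \dot{s_i w}\, B_+, \qquad s_i \in S,\ w \in W,
\]
together with $\dot{s_i}\, B_+\, \dot{s_i}^{-1} \neq B_+$.

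These reduce to a computation inside the rank-one subgroup $\varphi_i(SL_2)$. Writing $P_i := B_+ \cup B_+ \dot{s_i} B_+$ for the minimal parabolic attached to $s_i$, the defining relations of $\varphi_i$ show that conjugation by $\dot{s_i}$ sends each real root subgroup $x_\al(t)$ to $x_{s_i \al}(\pm t)$, permuting the positive root subgroups other than $x_i$ among themselves and carrying $x_i$ to the negative side; the exchange relation then follows from the single $SL_2$ identity
\[
x_{-i}(t) = \varphi_i\!\begin{pmatrix} 1 & 0 \\ t & 1 \end{pmatrix} \in B_+\, \dot{s_i}\, B_+ \qquad (t \neq 0),
\]
which merely records that a lower-triangular matrix with nonzero off-diagonal entry lies in the big Bruhat cell of $SL_2$. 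Only the real roots — equivalently the subgroups $\varphi_i(SL_2)$ and their $W$-conjugates — enter here, so the imaginary roots of $\fg$ play no role in this part of the argument.

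With the Tits system in hand, the general theory yields $G = \bigcup_{w \in W} B_+ \dot{w} B_+$ together with the disjointness of distinct double cosets. The disjointness, the one ingredient that uses more than formal manipulation, I would prove by induction on the length $\ell(w)$: from the product rule $B_+ \dot{s_i} B_+ \cdot B_+ \dot{w} B_+ \subseteq B_+ \dot{s_i w} B_+ \cup B_+ \dot{w} B_+$ (with the first cell alone occurring when $\ell(s_i w) > \ell(w)$), an equality $B_+ \dot{w} B_+ = B_+ \dot{w'} B_+$ is pushed to shorter elements by left-multiplying with a simple reflection lowering the length, the base case being excluded by $\dot{s_i}\, B_+\, \dot{s_i}^{-1} \neq B_+$. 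Since $W$ is infinite in the Kac-Moody case, it is essential that this induction is on length rather than on the order of $W$; no finiteness of $W$ is used.

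The negative decomposition follows by symmetry: the involution $\si$ of \eqref{eq:si} lifts to an automorphism of $G$ interchanging $N_+$ and $N_-$ and inverting $H$, hence carrying $B_+$ to $B_-$ while preserving $N_G(H)$ and inducing the identity on $W$; transporting the positive decomposition through $\si$ gives $G = \bigsqcup_{w \in W} B_- \dot{w} B_-$. The main obstacle I anticipate is not the combinatorics but the ind-scheme bookkeeping peculiar to the infinite-dimensional setting: one must check that $P_i$, the double cosets, and the relevant multiplication maps are honest ind-subvarieties, so that the set-theoretic Tits-system argument upgrades to a statement about the ind-algebraic group $G$, and that $N_G(H)/H$ is genuinely all of $W$. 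Granting the structural facts about $G$ assumed in \cref{subsec:KMgroups}, these points are settled by the constructions of \cite{Kumar2002}, which is why the result may be quoted directly from there.
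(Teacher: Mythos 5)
The paper offers no proof of this proposition---it is quoted verbatim from \cite[7.4.2]{Kumar2002}---and your sketch is precisely the standard Tits-system argument by which that reference (and Kac--Peterson before it) establishes the result: verify the BN-pair axioms with the exchange relation reduced to a rank-one computation in $\varphi_i(SL_2)$, prove disjointness by induction on length, and transport the positive decomposition to the negative one via the Chevalley involution of \eqref{eq:si}. Your outline is correct, including the observations that only real root subgroups enter and that no finiteness of $W$ is needed, so there is nothing to flag beyond the fact that the substantive verifications (e.g.\ the factorization $N_+ = N_+'(s_i)\cdot x_i(\BC)$ underlying the exchange axiom, and the ind-variety bookkeeping you mention) are exactly what the citation to \cite{Kumar2002} is supplying.
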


In particular, $G$ is a disjoint union of the \emph{double Bruhat cells}
\[
G^{u,v} := B_+ \dot{u} B_+ \cap B_- \dot{v} B_-.
\]

To obtain a more explicit description of the double Bruhat cells, we introduce the $\ell(w)$-dimensional unipotent subgroups
\begin{gather*}
N_{+}(w) := N_{+} \cap \dot{w} N_{-} \dot{w}^{-1}, \quad N_{-}(w) := N_{-} \cap \dot{w}^{-1} N_{+} \dot{w}
\end{gather*}
associated to any $w \in W$.  These have complementary infinite-dimensional subgroups
\[
N'_+(w) := N_+ \cap \dot{w} N_{+} \dot{w}^{-1}, \quad N'_-(w) := N_- \cap \dot{w}^{-1} N_{-} \dot{w}.
\]

\begin{prop}\label{prop:unipotentfactorization}
\emph{(\cite[6.1.3]{Kumar2002})} For any $w \in W$, the multiplication maps
\[
N_\pm(w) \times N'_\pm(w) \to N_\pm
\]
are biregular isomorphisms.
\end{prop}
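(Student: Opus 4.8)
The statement is really about the combinatorics of root subgroups, so the plan is to reduce it to a partition of the positive roots and then lift that partition to the group level. I will treat the $N_+$ factorization in detail; the case of $N_-(w),N'_-(w)$ is handled by the identical argument with the roles of positive and negative roots exchanged, equivalently by transporting through the involution $\si$ of \eqref{eq:si}, which interchanges $N_+$ and $N_-$.

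First I would record the root content. Since $N_+$ is the pro-unipotent subgroup supported on $\Delta_+$, $N_-$ on $\Delta_- = -\Delta_+$, and conjugation by $\dot{w}$ carries the root subgroup of $\al$ to that of $w\al$, the groups $N_+(w) = N_+\cap\dot{w}N_-\dot{w}^{-1}$ and $N'_+(w) = N_+\cap\dot{w}N_+\dot{w}^{-1}$ are supported on $\Delta_+\cap w\Delta_-$ and $\Delta_+\cap w\Delta_+$ respectively. As $\Delta = w\Delta_+\sqcup w\Delta_-$, intersecting with $\Delta_+$ yields the disjoint decomposition
\[
\Delta_+ = (\Delta_+\cap w\Delta_+)\sqcup(\Delta_+\cap w\Delta_-).
\]
Each positive imaginary root keeps a fixed sign under $W$ and so lies in the first set; hence $\Delta_+\cap w\Delta_-$ consists entirely of real roots, is the inversion set of $w^{-1}$, and has cardinality $\ell(w)$, matching the asserted dimension of $N_+(w)$. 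A direct check shows both sets are closed: if $\al,\be$ lie in one of them and $\al+\be\in\Delta$, then $w^{-1}\al$ and $w^{-1}\be$ share a common sign, so $\al+\be$ lies in the same set. Thus each is the root set of a genuine subgroup of $N_+$.

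Granting this, the factorization is the pro-unipotent analogue of the classical statement that a unipotent group factors as a product along a partition of its roots into two closed sets, and I would prove it by induction on $\ell(w)$. The base case $w=e$ is trivial, since $N_+(e)=N_+\cap N_-=\{1\}$ by the uniqueness in \cref{prop:gaussian} and $N'_+(e)=N_+$. For the rank-one case $w=s_i$ one has $\Delta_+\cap s_i\Delta_- = \{\al_i\}$, so $N_+(s_i)=x_i(\BC)$, and the factorization $N_+ = x_i(\BC)\cdot N'_+(s_i)$ is checked directly from the commutation relations of the $x_\al$ together with the $SL_2$ computation inside $\varphi_i$. For the inductive step I write $w = s_i w'$ with $\ell(w')=\ell(w)-1$; then $(w')^{-1}\al_i>0$, whence $\al_i\in\Delta_+\cap w\Delta_-$ and
\[
\Delta_+\cap w\Delta_- = \{\al_i\}\sqcup s_i\big(\Delta_+\cap w'\Delta_-\big).
\]
Conjugating by $\dot{s_i}$, using $\dot{w} = \dot{s_i}\dot{w'}$ for the reduced factorization, lifts this to $N_+(w) = x_i(\BC)\cdot\dot{s_i}\,N_+(w')\,\dot{s_i}^{-1}$, and combining the inductive hypothesis for $w'$ with the rank-one factorization for $s_i$ gives the biregularity of multiplication for $w$.

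The genuine difficulty is not this combinatorics but making it precise in the ind/pro-algebraic category. I expect the main obstacle to be the infinite-dimensionality of $N_+$ and $N'_+(w)$: one must know that $N_+$ is a pro-unipotent group with a well-behaved root filtration, that each closed root set cuts out a closed subgroup, that conjugation by $\dot{w}$ and the multiplication maps are morphisms of ind-varieties, and that the infinitely many imaginary-root contributions to $N'_+(w)$ assemble correctly into the complement. This is exactly the structure theory of \cite[Ch. 6]{Kumar2002}; granting it, the finite combinatorial input above — two complementary closed sets of roots, handled by induction on length — completes the argument.
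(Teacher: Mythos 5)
The paper gives no proof of this statement, citing it directly from \cite[6.1.3]{Kumar2002}, and your sketch follows essentially the same route as that source: partition $\Delta_+$ into the two closed subsets $\Delta_+\cap w\Delta_\pm$ supporting $N'_+(w)$ and $N_+(w)$, note the imaginary roots all land in the first, and induct on $\ell(w)$ via $\Delta_+\cap w\Delta_- = \{\al_i\}\sqcup s_i(\Delta_+\cap w'\Delta_-)$, with the pro-unipotent structure theory of \cite[Ch.\ 6]{Kumar2002} supplying the lift from closed root sets to closed subgroups and the biregularity of multiplication. The combinatorial steps are all correct, and you correctly identify that the only real content beyond them is the pro-group formalism you are importing.
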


The Bruhat decomposition then admits the following refinement:

\begin{corollary} \label{prop:refinedBruhat}
The natural maps
\[
N_+(w) \to N_+(w) \dot{w} B_+ / B_+, \quad N_-(w) \to B_- \backslash B_- \dot{w} N_-(w)
\]
are biregular isomorphisms.  In particular, the Bruhat cells can be written as
\[
B_+ \dot{w} B_+ = N_+(w) \dot{w} B_+, \quad B_- \dot{w} B_- = B_- \dot{w} N_-(w).
\]
\end{corollary}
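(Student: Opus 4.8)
The plan is to deduce both assertions from the product factorization of \cref{prop:unipotentfactorization} together with the uniqueness in the Gaussian factorization of \cref{prop:gaussian}. The one extra ingredient I need is that $N_-\cap B_+ = \{e\}$, and symmetrically $N_+\cap B_- = \{e\}$: any $x\in N_-\cap B_+$ lies in $G_0$ and has, because $x\in N_-$, trivial $H$- and $N_+$-components, while because $x\in B_+ = HN_+$ it has trivial $N_-$-component, so uniqueness forces $x = e$. I will treat the positive assertion in detail; the negative one is parallel, with the right cosets $B_-\backslash(\cdot)$ and the right $N_-$-action replacing the left cosets $(\cdot)/B_+$ and the left $N_+$-action.

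I would first record the cell equalities, since these identify the targets of the two maps. Writing $B_+ = N_+ H$ and using that $\dot{w}$ normalizes $H$, we get $B_+\dot{w}B_+ = N_+\dot{w}B_+$ from $H\dot{w}B_+ = \dot{w}HB_+ = \dot{w}B_+$. Factoring $N_+ = N_+(w)N'_+(w)$ by \cref{prop:unipotentfactorization}, it remains to absorb the complementary factor: for $n\in N'_+(w) = N_+\cap\dot{w}N_+\dot{w}^{-1}$ we have $\dot{w}^{-1}n\dot{w}\in N_+\subseteq B_+$, so $n\dot{w}\in\dot{w}B_+$ and hence $N'_+(w)\dot{w}B_+ = \dot{w}B_+$. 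This gives $B_+\dot{w}B_+ = N_+(w)\dot{w}B_+$, and symmetrically $B_-\dot{w}B_- = B_-\dot{w}N_-(w)$.

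For the map itself I would view $n\mapsto n\dot{w}B_+$ as the restriction to $N_+(w)$ of the orbit map of the left $N_+$-action on the flag ind-variety $G/B_+$ at the base point $\dot{w}B_+$, whose stabilizer is $N_+\cap\dot{w}B_+\dot{w}^{-1}$. I claim this stabilizer is exactly $N'_+(w)$. The inclusion $\supseteq$ is immediate from $N_+\subseteq B_+$; conversely, factoring a stabilizing element as $n = ab$ with $a\in N_+(w)$ and $b\in N'_+(w)$, the definitions give $\dot{w}^{-1}a\dot{w}\in N_-$ and $\dot{w}^{-1}b\dot{w}\in N_+$, so $\dot{w}^{-1}n\dot{w}\in B_+$ forces $\dot{w}^{-1}a\dot{w}\in N_-\cap B_+ = \{e\}$, whence $a = e$ and $n\in N'_+(w)$. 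Since \cref{prop:unipotentfactorization} exhibits $N_+(w)$ as a transversal to $N'_+(w)$ in $N_+$, restricting the orbit map to $N_+(w)$ is therefore a bijective morphism onto the Schubert cell $N_+(w)\dot{w}B_+/B_+$.

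The main obstacle is to promote this bijection to a biregular isomorphism, given that $N'_+(w)$ and $G/B_+$ are genuinely infinite-dimensional ind-objects and the quotient $N_+/N'_+(w)$ must be handled with care. The point is to reduce to finite dimensions: \cref{prop:unipotentfactorization} presents $N_+(w)$ as a smooth $\ell(w)$-dimensional subgroup complementary to the stabilizer $N'_+(w)$, and the construction of the Kac--Moody flag variety realizes the Schubert cell $N_+(w)\dot{w}B_+/B_+$ as a finite-dimensional smooth locally closed subvariety of $G/B_+$. With $N_+(w)$ transversal to the stabilizer, the standard homogeneous-space argument then identifies the restricted orbit map as a biregular isomorphism onto the cell, the inverse being the regular projection $N_+\to N_+(w)$ along $N'_+(w)$ composed with a local lift. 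The negative statement follows by the mirror argument, using $B_-\backslash G$, the right $N_-$-action, and $N_+\cap B_- = \{e\}$.
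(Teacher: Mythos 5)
Your argument is correct and is essentially the one the paper intends: the corollary is stated without proof as a direct consequence of \cref{prop:unipotentfactorization}, and your derivation of the cell equalities by absorbing $N'_+(w)$ into $\dot{w}B_+$, together with the stabilizer computation using $N_-\cap B_+=\{e\}$ from the uniqueness in \cref{prop:gaussian}, supplies exactly the intended details. The only soft spot is the appeal to a ``local lift'' for regularity of the inverse; this can be made explicit via the formula $xB_+\mapsto \dot{w}[\dot{w}^{-1}x]_-\dot{w}^{-1}$ (well defined on cosets and regular by \cref{prop:gaussian}; it is essentially the content of \cref{cor:FZ2.9/10}), or by citing the standard finite-dimensional description of Schubert cells in the Kac--Moody flag variety.
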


\begin{corollary} \label{cor:FZ2.9/10}
For any $x \in B_+ \dot{w} B_+$, we have $\dot{w}^{-1} x \in G_0$.  Then
\[
\pi_+(x) := \dot{w}[\dot{w}^{-1} x]_- \dot{w}^{-1} \in N_+(w)
\]
and $x = \pi_+(x) \dot{w} b_+$ for some $b_+ \in B_+$.  Similarly, if $x \in B_- \dot{w} B_-$, then $x \dot{w}^{-1} \in G_0$,
\[
\pi_-(x) := \dot{w}^{-1} [x \dot{w}^{-1}]_+ \dot{w} \in N_-(w),
\]
and $x = b_- \dot{w} \pi_-(x)$ for some $b_- \in B_-$.  
\end{corollary}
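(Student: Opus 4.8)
The plan is to deduce \cref{cor:FZ2.9/10} directly from the refined Bruhat decomposition in \cref{prop:refinedBruhat}, using the Gaussian factorization of \cref{prop:gaussian} to extract the unipotent component.

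First I would treat the positive case. By \cref{prop:refinedBruhat} we have $B_+\dot w B_+ = N_+(w)\dot w B_+$, so any $x\in B_+\dot w B_+$ can be written as $x = n\,\dot w\,b_+$ with $n\in N_+(w)$ and $b_+\in B_+$. The first task is to see that $\dot w^{-1}x\in G_0$: from the factorization $\dot w^{-1}x = (\dot w^{-1}n\dot w)\,b_+$, and since $n\in N_+(w)=N_+\cap\dot w N_-\dot w^{-1}$ gives $\dot w^{-1}n\dot w\in N_-$, we get $\dot w^{-1}x\in N_- B_+ = N_- H N_+$, which lands in $G_0$ by \cref{prop:gaussian}. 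The second and more delicate task is to verify the explicit formula $\pi_+(x)=\dot w[\dot w^{-1}x]_-\dot w^{-1}$. Applying the uniqueness of the Gaussian factorization to $\dot w^{-1}x = (\dot w^{-1}n\dot w)\,b_+$, I would identify $[\dot w^{-1}x]_- = \dot w^{-1}n\dot w$ (reading off the $N_-$-part, noting $b_+\in B_+=HN_+$ contributes nothing to the $N_-$-component). Conjugating back gives $\dot w[\dot w^{-1}x]_-\dot w^{-1}=n\in N_+(w)$, which is exactly $\pi_+(x)$, and then $x=\pi_+(x)\dot w b_+$ follows immediately. The one point requiring care is that the $N_-$-part of a product $n'\,b_+$ with $n'\in N_-$ and $b_+\in B_+$ is exactly $n'$; this holds because $n' b_+ = n'\cdot(h n'')$ with $h\in H$, $n''\in N_+$ already exhibits the unique $N_-\times H\times N_+$ factorization.

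The negative case is entirely parallel, obtained either by running the same argument with the roles of $N_\pm$ interchanged, or formally by applying the anti-automorphism $g\mapsto g^{-1}$ (or the transpose-type involution) that swaps $B_+$ and $B_-$. Here one uses $B_-\dot w B_- = B_-\dot w N_-(w)$, writes $x=b_-\dot w m$ with $m\in N_-(w)$, checks $x\dot w^{-1}=b_-(\dot w m\dot w^{-1})\in B_- N_+\subset G_0$ since $\dot w m\dot w^{-1}\in N_+$, and then reads off $[x\dot w^{-1}]_+ = \dot w m\dot w^{-1}$ to conclude $\pi_-(x)=\dot w^{-1}[x\dot w^{-1}]_+\dot w = m\in N_-(w)$.

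I do not anticipate a genuinely hard step; the content is really just bookkeeping with the two factorizations. The main thing to be careful about is the interplay of the conjugation by $\dot w$ with the Gaussian decomposition — specifically making sure that the $N_-$-component picked out by $[\,\cdot\,]_-$ matches the conjugated factor $\dot w^{-1}n\dot w$ on the nose, which relies on the uniqueness clause of \cref{prop:gaussian} and the defining property $N_+(w)=N_+\cap\dot w N_-\dot w^{-1}$. A secondary subtlety is that the representatives $\dot w$ appearing here need not be the specific lifts $\ol w$ or $\dol w$; since every statement only involves $\dot w^{-1}x$ or $x\dot w^{-1}$ together with conjugation by $\dot w$, the argument is insensitive to the choice of representative, so no normalization is needed.
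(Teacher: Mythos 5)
Your argument is correct and is precisely the derivation the paper intends: the corollary is stated without explicit proof as a consequence of \cref{prop:refinedBruhat}, and your use of the factorization $x = n\,\dot w\,b_+$ together with the uniqueness clause of \cref{prop:gaussian} to identify $[\dot w^{-1}x]_- = \dot w^{-1}n\dot w$ supplies exactly the bookkeeping the paper omits. The negative case and the remark about independence of the representative $\dot w$ are likewise handled correctly.
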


\begin{prop}\label{prop:fdimdbc}
The map
\[
G^{u,v} \to N_+(u) \times N_-(v) \times H, \quad x \mapsto (\pi_+(x),\pi_-(x),[\ol{u}^{-1}x]_0)
\]
provides an isomorphism of $G^{u,v}$ with the open set 
\[
\{(n_+,n_-,h)|\ol{v}n_- n_+^{-1} \ol{u}^{-1} \in G_0 \} \subset N_+(u) \times N_-(v) \times H.
\]  In particular, $G^{u,v}$ is a rational affine variety of dimension $\ell(u)+\ell(v)+\wt{r}$.
\end{prop}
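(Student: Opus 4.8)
The plan is to exhibit an explicit inverse to the stated map, using the two one-sided factorizations of \cref{prop:refinedBruhat} together with the Gaussian factorization of \cref{prop:gaussian}.

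First I would check that the map is a well-defined morphism. For $x\in G^{u,v}$ we have $x\in B_+\ol{u}B_+$, so $\ol{u}^{-1}x\in G_0$ by \cref{cor:FZ2.9/10} and $[\ol{u}^{-1}x]_0$ is defined and regular in $x$ by \cref{prop:gaussian}; the same corollary produces the regular maps $\pi_+(x)\in N_+(u)$ and $\pi_-(x)\in N_-(v)$. Writing $n_+=\pi_+(x)$ and $\nu:=\ol{u}^{-1}n_+\ol{u}\in N_-$, the factorization $x=n_+\ol{u}b_+$ with $b_+\in B_+$ yields the Gaussian factorization $\ol{u}^{-1}x=\nu\,h\,m_+$, where $h=[\ol{u}^{-1}x]_0$ and $m_+=[\ol{u}^{-1}x]_+\in N_+$; in particular $h$ is exactly the Cartan part of $b_+$. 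Thus recording $(n_+,\pi_-(x),h)$ is equivalent to recording $(n_+,\pi_-(x),b_+)$ modulo the unknown unipotent factor $m_+$, and the whole problem is to show that $m_+$, equivalently $x$ itself, is determined by this data precisely when $\ol{v}\,\pi_-(x)\,n_+^{-1}\ol{u}^{-1}$ is Gaussian-factorizable.

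The heart of the argument is the inverse construction. Given $(n_+,n_-,h)$ with $g:=\ol{v}\,n_-\,n_+^{-1}\ol{u}^{-1}\in G_0$, I must produce $b_+\in B_+$ with Cartan part $h$ and $b_-\in B_-$ solving the matching equation $n_+\ol{u}b_+=b_-\ol{v}n_-$, and then set $x$ equal to either side. Left-multiplying by $\ol{u}^{-1}$ rewrites this as $\nu\,h\,m_+=\ol{u}^{-1}b_-\ol{v}\,n_-$ with $\nu=\ol{u}^{-1}n_+\ol{u}\in N_-$ and $b_+=h m_+$; since the left side is literally a Gaussian factorization, the equation says that $\ol{u}^{-1}b_-\ol{v}\,n_-$ lies in $G_0$ with prescribed factors $\nu$, $h$, $m_+$. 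I would solve this by feeding in the Gaussian factorization $g=[g]_-[g]_0[g]_+$: the identity $\ol{v}n_-=g\,\ol{u}n_+$ lets one express $b_-$ and $m_+$ as regular functions of $[g]_\pm,[g]_0$ and the input data, and conversely the solvability of the system is equivalent to $g\in G_0$. The resulting $x=n_+\ol{u}h m_+$ lies in $B_+\ol{u}B_+$ by construction and in $B_-\ol{v}B_-=B_-\ol{v}N_-(v)$ by \cref{prop:refinedBruhat}, with $\pi_+(x)=n_+$, $\pi_-(x)=n_-$, and $[\ol{u}^{-1}x]_0=h$; regularity of the inverse follows from regularity of the Gaussian factorization.

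I expect the main obstacle to be exactly this inversion step, for two related reasons. First, $m_+$ ranges over the infinite-dimensional group $N_+$, so one must use the refined Bruhat decomposition of \cref{prop:refinedBruhat} and the unipotent factorization $N_+=N_+(v^{-1})N'_+(v^{-1})$ of \cref{prop:unipotentfactorization} to see that the Bruhat condition $x\in B_-\ol{v}B_-$ cuts the free part of $m_+$ down to an $\ell(v)$-dimensional family matched bijectively to $N_-(v)$ by $\pi_-$; this is what forces uniqueness despite $N_+$ being infinite-dimensional. Second, one must verify that the solvability locus of the matching equation is cut out exactly by the single nonvanishing condition $g\in G_0$, which by \cref{prop:G_0eq} amounts to checking the principal minors $\De^{\om_j}(g)$; identifying these with the obstruction to the Gaussian factorization above is where the generalized minor calculus does the real work. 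Once the bijection and its regularity are in place, the dimension count $\dim N_+(u)+\dim N_-(v)+\dim H=\ell(u)+\ell(v)+\wt{r}$ is immediate, and rationality follows since the target is an open subvariety of a product of unipotent groups, which are affine spaces, with the torus $H$.
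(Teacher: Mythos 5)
Your proposal follows the paper's proof essentially verbatim: the paper's entire argument consists of writing down the explicit inverse $(n_+,n_-,h)\mapsto n_+\ol{u}h\,[\ol{v}n_-n_+^{-1}\ol{u}^{-1}]_+$ (with its verification left as ``an elementary calculation''), identifying the open set as the nonvanishing locus of $\prod_{1\leq j\leq\wt{r}}\De^{\om_j}$ pulled back along the regular map $(n_+,n_-,h)\mapsto\ol{v}n_-n_+^{-1}\ol{u}^{-1}$ via \cref{prop:G_0eq}, and deducing rationality and the dimension count from $N_+(u)\times N_-(v)\times H$ being open in $\BA^{\ell(u)+\ell(v)+\wt{r}}$. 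Your discussion of the matching equation $n_+\ol{u}b_+=b_-\ol{v}n_-$ and of \cref{prop:refinedBruhat,prop:unipotentfactorization} is precisely an unpacking of that deferred elementary calculation, so the two arguments coincide in route and in level of detail.
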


\begin{proof}
By an elementary calculation one checks that
\[
(n_+,n_-,h) \mapsto n_+ \ol{u} h [\ol{v} n_- n_+^{-1} \ol{u}^{-1}]_+
\]
provides the inverse map.  By \cref{prop:G_0eq} the given open set is the nonvanishing locus of the pullback of $\prod_{1 \leq j \leq \wt{r}}\De^{\om_j} \in \BC[G]$ along the regular map
\[
(n_+,n_-,h) \mapsto \ol{v}n_- n_+^{-1} \ol{u}^{-1}.
\]
The last statement then follows since $N_+(u) \times N_-(v) \times H$ is an open subvariety of $\BA^{\ell(u)+\ell(v)+\wt{r}}$.
\end{proof}

\subsection{Strongly Regular Functions and Generalized Minors} 
When $G$ is infinite-dimen\-sional, there are several natural algebras of functions one may consider on it.  Being an ind-variety, $G$ is the increasing union of finite-dimensional varieties, and the inverse limit of their coordinate rings is a complete topological algebra of functions on $G$.  For our purposes it is more practical to consider a proper subalgebra of this, the ring of strongly regular functions.  

Given a dominant integral weight $\la \in P_+$ we have an irreducible highest-weight $\fg$-module $L(\la)$ and its graded dual $L(\la)^\vee$, both of which integrate to representations of $G$.  Recall from \cref{sec:KMalg} that $L(\la)$ is equipped with a nondegenerate bilinear form.  For each $v_1, v_2 \in L(\la)$, we use this to define a function on $G$ by taking
\[
g \mapsto \langle v_1 | g \cdot v_2 \rangle.
\]
We regard this as a matrix coefficient of the image of $g$ in $\mathrm{End}\; L(\la)$.

\begin{defn}
(\cite{Kac1983}) The algebra of \emph{strongly regular} functions, which we will denote simply by $\BC[G]$, is the algebra generated by all such matrix coefficients of irreducible highest-weight representations.  
\end{defn}

\begin{prop}\label{prop:sreg}
\emph{(\cite[Theorem 1]{Kac1983})}
The algebra $\BC[G]$ is closed under the $G \times G$ action
\[
((g_1,g_2)\cdot f)(g) = f(g_1^{-1}g g_2).
\]
Furthermore, as $G \times G$-modules there is an isomorphism
\[
\BC[G] \cong \bigoplus_{\la \in P_+}(L(\la)^\vee \ot L(\la)).
\]
\end{prop}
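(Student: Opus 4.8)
The plan is to prove this as an algebraic Peter--Weyl theorem, organizing $\BC[G]$ by the highest weights of the modules whose matrix coefficients generate it. For each $\la \in P_+$ let $M(\la) \subseteq \BC[G]$ denote the linear span of the functions $c_{v_1,v_2}\colon g \mapsto \langle v_1 | g\cdot v_2\rangle$ with $v_1,v_2 \in L(\la)$; I will show each $M(\la)$ is $G\times G$-stable and isomorphic to $L(\la)^\vee \ot L(\la)$, that $\BC[G] = \sum_{\la} M(\la)$, and that this sum is direct. The first ingredient is the $G$-contravariance of the form on $L(\la)$: writing $g \mapsto g^t$ for the anti-automorphism of $G$ integrating the Chevalley anti-involution $\tau = -\si$ (so $x_i(t)^t = x_{-i}(t)$ and $h^t = h$), one has $\langle g v_1 | v_2\rangle = \langle v_1 | g^t v_2\rangle$. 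A direct computation then gives $(g_1,g_2)\cdot c_{v_1,v_2} = c_{(g_1^t)^{-1}v_1,\, g_2 v_2}$, so that the assignment $c_\la\colon L(\la)^\vee \ot L(\la) \to \BC[G]$, with $L(\la)^\vee$ realized via the form as $L(\la)$ carrying the twisted action $g\cdot v_1 = (g^t)^{-1}v_1$, is a homomorphism of $G\times G$-modules with image $M(\la)$; in particular $M(\la)$ is $G\times G$-stable.

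Next I would establish that $c_\la$ is injective and that the $M(\la)$ together span $\BC[G]$ directly. For injectivity, differentiating a vanishing combination $\sum_i c_{v_1^i,v_2^i} \equiv 0$ at the identity yields $\sum_i \langle v_1^i | X v_2^i\rangle = 0$ for every $X \in U(\fg)$; since $L(\la)$ is an irreducible $\fg$-module, the Jacobson density theorem makes $\rho_\la(U(\fg))$ dense (in the finite topology) in $\End L(\la)$, and the pairing defining this functional depends on its argument only through finitely many vectors, so the condition extends to all of $\End L(\la)$ and forces $\sum_i v_1^i \ot v_2^i = 0$ by nondegeneracy. For the span, note that $\BC[G]$ is by definition the algebra generated by the $M(\la)$, so it suffices that $\sum_\la M(\la)$ is already closed under multiplication. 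This follows because $c_{v_1,v_2}\cdot c_{w_1,w_2}$ is the matrix coefficient $g \mapsto \langle v_1\ot w_1 | g(v_2\ot w_2)\rangle$ of $L(\la)\ot L(\mu)$, which is completely reducible as an integrable highest-weight module, $L(\la)\ot L(\mu) \cong \bigoplus_\nu L(\nu)^{\oplus m_\nu}$ with each $\nu \in P_+$; hence the product lies in $\sum_\nu M(\nu)$. Directness of $\sum_\la M(\la)$ comes from the linear independence of matrix coefficients of inequivalent irreducibles: the $M(\la)$ are pairwise non-isomorphic $G\times G$-modules because the $H\times H$-weights occurring in $M(\la)$, equivalently its pair of extreme weights, recover $\la$.

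Closure of $\BC[G]$ under $G\times G$ is then immediate, since $(g_1,g_2)$ acts by the algebra automorphism $f \mapsto f(g_1^{-1}\,\cdot\, g_2)$ and preserves each generating subspace $M(\la)$, hence preserves the algebra they generate; and the decomposition $\BC[G] \cong \bigoplus_{\la \in P_+} L(\la)^\vee \ot L(\la)$ is exactly the combination of the isomorphisms $c_\la$ with the directness of the sum. I expect the main obstacle to be the care demanded by infinite-dimensionality: $L(\la)$ is a weight-graded module and $L(\la)^\vee$ is its \emph{restricted} dual, so one must check that each $c_{v_1,v_2}$ is a genuine (strongly regular) function and that $c_\la$ both lands in and exhausts $M(\la)$ at the level of these restricted tensor products. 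The other structural input that must be imported from the Kac--Moody theory, rather than invoked by finite-dimensional reflex, is the complete reducibility of $L(\la)\ot L(\mu)$ as an integrable module (as in \cite{Kac1994}); this is precisely what makes $\sum_\la M(\la)$ an algebra. Once these two points are secured, the density and linear-independence arguments are formal.
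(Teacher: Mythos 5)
The paper does not prove this statement; it is quoted verbatim from Kac--Peterson (\cite[Theorem 1]{Kac1983}), so there is no internal argument to compare yours against. What you have written is essentially the standard algebraic Peter--Weyl proof, which is also the proof in the cited source, and it is correct. You correctly isolate the two inputs that are genuinely nontrivial in the Kac--Moody setting: the contravariance and nondegeneracy of the form on $L(\la)$ (which identifies $L(\la)$ with its \emph{restricted} dual, weight space by weight space), and the complete reducibility of $L(\la)\ot L(\mu)$ into integrable highest-weight modules $L(\nu)$, $\nu \in P_+$, which is what makes $\sum_\la M(\la)$ multiplicatively closed. Two small points deserve a sentence each in a full write-up: (i) in the decomposition $L(\la)\ot L(\mu)\cong\bigoplus_\nu L(\nu)^{\oplus m_\nu}$ infinitely many $\nu$ may occur, so you should note that a fixed vector $v_2\ot w_2$ has only finitely many nonzero isotypic components and that distinct isotypic components are orthogonal for the product contravariant form, so that $c_{v_1,v_2}\cdot c_{w_1,w_2}$ is a \emph{finite} sum of matrix coefficients of the $L(\nu)$; (ii) ``differentiating at the identity'' should be read as extracting coefficients of the polynomial functions $t_1,\dots,t_k\mapsto\langle v_1\,|\,x_{j_1}(t_1)\cdots x_{j_k}(t_k)\,v_2\rangle$, which is what actually produces $\langle v_1\,|\,Xv_2\rangle$ for all $X$ in $U(\fg)$ rather than just in $\fg$. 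Alternatively, injectivity of $c_\la$ follows more quickly from the irreducibility of $L(\la)^\vee\ot L(\la)$ as a $G\times G$-module (its kernel is a submodule not containing $c_{v_\la,v_\la}$), which also streamlines the directness argument.
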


\begin{defn}\label{prop:repint}
Given a fundamental weight $\om_i$ and a pair $w,w' \in W$, the \emph{generalized minor} $\De_{w,w'}^{\om_i}$ is the matrix coefficient
\[
g \mapsto \langle \ol{w} v_{\om_i} | g \ol{w'} v_{\om_i} \rangle,
\]
where $v_{\om_i}$ is a highest-weight vector of $L(\om_i)$.  The \emph{principal minor} $\De^{\om_i} := \De^{\om_i}_{e,e}$ is characterized by the fact that on the dense open set $G_0$,
\[
\De^{\om_i}: g=[g]_-[g]_0[g]_+ \mapsto [g]_0^{\om_i}.
\]
The other minors can then be expressed in terms of $\De^{\om_i}$ by
\[
\De^{\om_i}_{w,w'}(g) = \De^{\om_i}(\ol{w}^{-1}g\ol{w'}).
\]
\end{defn}

\begin{prop} \label{prop:prime}
The algebra $\BC[G]$ is a unique factorization domain in which the generalized minors are prime.  Two minors $\De_{u,v}^{\om_j}$ and $\De_{u',v'}^{\om_i}$ are relatively prime unless $u \om_j = u' \om_i$ and $v \om_j = v' \om_i$.
\end{prop}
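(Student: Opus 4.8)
The plan is to deduce every assertion from the structure of $\BC[G]$ as a $P\times P$-graded algebra together with the geometry of the big cell $G_0$. Under the $H\times H$ action of \cref{prop:sreg} the algebra $\BC[G]$ decomposes into weight spaces, and each generalized minor is a semi-invariant: writing $\De^{\om_i}_{u,v}(g)=\langle\ol u\,v_{\om_i}\,|\,g\,\ol v\,v_{\om_i}\rangle$, the vectors $\ol u\,v_{\om_i}$ and $\ol v\,v_{\om_i}$ are extremal, of weights $u\om_i$ and $v\om_i$, so $\De^{\om_i}_{u,v}$ is an $H\times H$-eigenvector of weight $(-u\om_i,\,v\om_i)$. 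Since the extremal weight spaces of $L(\om_i)$ are one-dimensional, $\ol u\,v_{\om_i}$ is determined up to scalar by the weight $u\om_i$; hence $\De^{\om_i}_{u,v}$ depends, up to a scalar, only on the pair of extremal weights $(u\om_i,v\om_i)$, and in particular determines $\om_i$ as the dominant representative of the Weyl orbit $W\om_i$. I record this bookkeeping first because it drives both the primality and the coprimality claims. The reduction of the general minors to the principal ones is then immediate: by \cref{prop:repint} one has $\De^{\om_i}_{u,v}(g)=\De^{\om_i}(\ol u^{-1}g\,\ol v)$, and by \cref{prop:sreg} the translation $g\mapsto\ol u^{-1}g\,\ol v$ induces an automorphism of $\BC[G]$ carrying $\De^{\om_i}$ to $\De^{\om_i}_{u,v}$. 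Automorphisms preserve both primality and the property of being a unique factorization domain, so it suffices to treat the $\De^{\om_i}$.

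For the factorization property I would argue by localization at the principal minors, granting the primality proved in the next paragraph. By \cref{prop:gaussian} the big cell is $G_0\cong N_-\times H\times N_+$, whose coordinate ring is a tensor product of polynomial rings (for the pro-unipotent factors $N_\pm$, each element involving only finitely many root coordinates) with the Laurent polynomial ring $\BC[H]=\BC[P]$; such a ring is a unique factorization domain whose units are exactly the scalar multiples of the torus characters $[g]_0^\mu$, $\mu\in P$. By \cref{prop:G_0eq} we have $\BC[G_0]=\BC[G]\bigl[(\prod_{j}\De^{\om_j})^{-1}\bigr]$ and $G\setminus G_0=\bigcup_j V(\De^{\om_j})$. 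Granting that each $\De^{\om_j}$ is prime, Nagata's criterion—equivalently, the localization sequence $\bigoplus_j\BZ\,[V(\De^{\om_j})]\to\operatorname{Cl}(\BC[G])\to\operatorname{Cl}(\BC[G_0])=0$ for the Krull domain $\BC[G]$—shows that $\BC[G]$ is itself a unique factorization domain, since the classes of the boundary divisors are principal.

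It remains to prove that $\De^{\om_i}$ is prime, which I expect to be the main obstacle. The homogeneity recorded above reduces this to a statement about semi-invariants: if $\De^{\om_i}=PQ$ in the $P\times P$-graded domain $\BC[G]$, then $P$ and $Q$ are themselves semi-invariants, and since $\De^{\om_i}$ restricts on $G_0$ to the nonvanishing character $[g]_0^{\om_i}$, each of $P,Q$ restricts to a unit of $\BC[G_0]$, hence to a scalar times some $[g]_0^{\mu}$. The key lemma, and the genuinely hard step, is that a strongly regular semi-invariant restricting to $[g]_0^\mu$ on the dense set $G_0$ must have $\mu\in P_+$ and must equal a scalar multiple of $\De^\mu=\prod_i(\De^{\om_i})^{\langle\mu|\al_i^\vee\rangle}$; concretely this amounts to identifying the codimension-one components of $G\setminus G_0$ with the irreducible Schubert-type divisors $V(\De^{\om_i})$ and checking that $\De^{\om_i}$ vanishes along $V(\De^{\om_i})$ to order one, for which I would use the Bruhat decomposition of \cref{prop:Bruhat}. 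Granting the lemma, the vanishing divisor of $\De^{\om_i}$ is the single reduced irreducible component $V(\De^{\om_i})$, so $\De^{\om_i}$ generates the prime ideal of an integral divisor and is therefore prime; equivalently, any semi-invariant factorization gives $\mu_P+\mu_Q=\om_i$ with $\mu_P,\mu_Q\in P_+$, and since $\om_i$ is a fundamental weight one summand must vanish, making the factorization trivial.

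Finally, the coprimality statement follows formally. In the unique factorization domain $\BC[G]$ two prime elements are relatively prime unless they are associates, i.e.\ unless they agree up to a nonzero scalar. By the bookkeeping of the first paragraph, $\De^{\om_j}_{u,v}$ and $\De^{\om_i}_{u',v'}$ agree up to scalar precisely when they lie in the same isotypic component $L(\la)^\vee\ot L(\la)$—which forces $W\om_j=W\om_i$ and hence $\om_j=\om_i$—and have equal extremal weights, that is $u\om_j=u'\om_i$ and $v\om_j=v'\om_i$. This is exactly the exceptional condition in the statement, so the two minors are relatively prime in all other cases, completing the proof. Everything outside the boundary-divisor lemma is grading bookkeeping and the formal localization argument, so I expect that lemma to be the sole point requiring real work.
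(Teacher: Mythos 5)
Your reduction of general minors to principal ones via the automorphism $g\mapsto \ol{u}^{-1}g\ol{v}$, and your coprimality argument (associates in a UFD whose only units are constants, combined with linear independence of minors in distinct isotypic components of the decomposition in \cref{prop:sreg}), are exactly the paper's. Where you diverge is that the paper does not prove the UFD property or the primality of the principal minors at all: it cites them directly from \cite[Theorem 3]{Kac1983} and the proof thereof. Your second and third paragraphs are therefore an attempt to reprove that citation, and your outline --- localize at $\prod_j\De^{\om_j}$ to land in $\BC[G_0]\cong\BC[N_-\times H\times N_+]$, identify the boundary divisors, and apply Nagata's criterion --- is in fact close to the strategy of Kac and Peterson's own argument.

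The difficulty is that, as written, everything of substance is concentrated in the step you defer. The ``key lemma'' you state (a strongly regular $H\times H$-semi-invariant restricting to $[g]_0^\mu$ on $G_0$ is a scalar times $\prod_i(\De^{\om_i})^{\langle\mu|\al_i^\vee\rangle}$ with $\mu\in P_+$) is essentially equivalent to the primality statement itself, and the inputs you would need to establish it are asserted rather than proven: that $\BC[G]$ is a Krull domain to which the divisor-class localization sequence applies (not automatic, since $\BC[G]$ is not Noetherian and $G$ is only an ind-variety); that $G\setminus G_0$ has pure codimension one with irreducible components exactly the $V(\De^{\om_j})$; and that each $\De^{\om_j}$ vanishes to order one along its divisor. ``Use the Bruhat decomposition'' is where the real work lives, and none of it is carried out. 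So the proposal is a legitimate alternative route in outline, but it has a genuine gap precisely at the point the paper outsources to \cite{Kac1983}; if you want a self-contained argument you must either supply that divisor analysis or, as the paper does, cite it.
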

\begin{proof}
That $\BC[G]$ is a unique factorization domain is Theorem 3 in \cite{Kac1983}, and the fact that the principal minors are prime is contained in the proof thereof.  Since an arbitrary generalized minor only differs from a principal minor by an automorphism of $\BC[G]$, it is also prime.

If $u \om_j = u' \om_i$ and $v \om_j = v' \om_i$, it is clear from \cref{prop:repint} that the generalized minors $\De_{u,v}^{\om_j}$ and $\De_{u',v'}^{\om_i}$ differ by a scalar multiple. On the other hand, if $u \om_j \neq u' \om_i$ or $v \om_j \neq v' \om_i$, it is clear from the decomposition in \cref{prop:sreg} that $\De_{u,v}^{\om_j}$ and $\De_{u',v'}^{\om_i}$ are linearly independent.  But the only units of $\BC[G]$ are the constant functions \cite[2.1c]{Kac1983}, so the proposition follows.
\end{proof}

The identity established in the next proposition plays a key role in the cluster algebras constructed on double Bruhat cells, providing the prototypical example of an exchange relation.  It is a direct generalization of \cite[1.17]{Fomin1999}, which in turn generalizes several classical determinantal identities.  The proof below follows that in \cite[1.17]{Fomin1999}, though when the Cartan matrix does not have full rank and $r < \wt{r} = \dim(H)$ it is important to use \cref{eq:extcartan} in interpreting the right-hand side of the identity.

\begin{prop} \label{prop:gendetid}
Suppose $u,v \in W$ satisfy $\ell(us_i) > \ell(u)$ and $\ell(vs_i) > \ell(v)$ for some $1 \leq i \leq r$.  Then
\[
\De^{\om_i}_{u,v}\De^{\om_i}_{us_i,vs_i} = \De^{\om_i}_{us_i,v}\De^{\om_i}_{u,vs_i} + \prod_{\substack{1 \leq k \leq \wt{r} \\ k \neq i}}(\De^{\om_k}_{u,v})^{-C_{ki}}.
\]
\end{prop}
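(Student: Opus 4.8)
The plan is to reduce the general Kac-Moody identity to a computation inside a single rank-one $SL_2$-subgroup, exactly as in the proof of \cite[1.17]{Fomin1999}. Since $\De^{\om_i}_{u,v}(g) = \De^{\om_i}(\ol{u}^{-1} g \ol{v})$ by \cref{prop:repint}, and similarly $\De^{\om_i}_{us_i,vs_i}(g) = \De^{\om_i}(\ol{us_i}^{-1} g \ol{vs_i})$, the hypotheses $\ell(us_i)>\ell(u)$ and $\ell(vs_i)>\ell(v)$ guarantee that $\ol{us_i} = \ol{u}\,\ol{s_i}$ and $\ol{vs_i}=\ol{v}\,\ol{s_i}$. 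Thus after substituting $h := \ol{u}^{-1} g \ol{v}$, the entire identity becomes a statement about the four minors $\De^{\om_i}_{e,e}(h)$, $\De^{\om_i}_{s_i,s_i}(h)$, $\De^{\om_i}_{s_i,e}(h)$, $\De^{\om_i}_{e,s_i}(h)$, together with the principal minors $\De^{\om_k}_{e,e}(h) = \De^{\om_k}(h)$ on the right-hand side, all evaluated at a single group element $h$. So first I would make this substitution to strip away $u$ and $v$.

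The heart of the argument is then the weight-space analysis in the highest-weight module $L(\om_i)$. Because $\langle\om_i\,|\,\al^\vee_j\rangle = \de_{ij}$, the $\al^\vee_i$-string through the highest weight $v_{\om_i}$ has length exactly one: the vectors $v_{\om_i}$ and $\ol{s_i}v_{\om_i} = f_i v_{\om_i}$ span a two-dimensional $\varphi_i(SL_2)$-submodule isomorphic to the standard representation, while $e_i v_{\om_i}=0$ and $f_i^2 v_{\om_i}=0$. The four minors above are precisely the four matrix coefficients of the action of $h$ on this two-dimensional subspace, read off in the basis $\{v_{\om_i}, f_i v_{\om_i}\}$. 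The identity to be proved is therefore nothing but the statement that a certain $2\times 2$ determinant factors as claimed. Concretely, I would express $\De^{\om_i}_{s_i,s_i}(h)$ using the relation between the form on $L(\om_i)$ and the $SL_2$-action, showing that the cross term $\De^{\om_i}_{e,e}\De^{\om_i}_{s_i,s_i} - \De^{\om_i}_{s_i,e}\De^{\om_i}_{e,s_i}$ equals the $2\times 2$ minor of $h$ acting on the standard $\varphi_i(SL_2)$-submodule.

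The key computation is to identify that $2\times 2$ minor with the monomial $\prod_{k\neq i}(\De^{\om_k})^{-C_{ki}}$. The point is that the determinant of $h$ acting on the two-dimensional weight space is an $H$-semiinvariant (under left and right multiplication by $H$) whose weight is $\om_i + s_i\om_i = 2\om_i - \al_i$. By \cref{eq:extcartan}, $\al_i = \sum_{1\leq k\leq \wt r} C_{ki}\om_k$, so $2\om_i - \al_i = \sum_{k\neq i}(-C_{ki})\om_k$ (the $k=i$ term contributes $2\om_i - C_{ii}\om_i = 0$). On the dense open set $G_0$ every $H$-biinvariant regular function of a given weight is a scalar multiple of the corresponding monomial in the principal minors, by \cref{prop:prime} and the fact that the $\De^{\om_k}$ are the coordinates on $H$; matching the value at $h=e$ pins the scalar to $1$. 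This is where \cref{eq:extcartan} is essential when $r<\wt r$, since the exponents $-C_{ki}$ for $k>r$ are defined only through that relation.

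The main obstacle I expect is making the $SL_2$-reduction rigorous in the infinite-dimensional setting: I must verify that the restriction of the bilinear form on $L(\om_i)$ to the two-dimensional subspace $\langle v_{\om_i}, f_i v_{\om_i}\rangle$ behaves as in the finite-dimensional case, and that no higher weight spaces interfere with the four matrix coefficients in question. This is controlled by the length-one property of the $\al_i$-string at the highest weight, which holds uniformly because $L(\om_i)$ is an integrable highest-weight module; the relevant $\varphi_i(SL_2)$-module structure is finite-dimensional regardless of whether $\fg$ itself is. Once the identity is established as an equality of strongly regular functions on the dense open set where all principal minors are nonvanishing, it extends to all of $G$ by continuity, since both sides lie in the domain $\BC[G]$ and agree on a dense subset.
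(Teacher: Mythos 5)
Your overall strategy matches the paper's (reduce to $u=v=e$ using $\ol{us_i}=\ol{u}\,\ol{s_i}$, characterize both sides as functions on the dense set $G_0$, and compute the weight $2\om_i-\al_i=-\sum_{k\neq i}C_{ki}\om_k$ via \cref{eq:extcartan}), but there is a genuine gap at the step where you pin the two sides down. You claim that on $G_0$ an $H$-bisemiinvariant regular function of a given weight is a scalar multiple of the corresponding monomial in principal minors. That is false: for example $\De^{\om_i}_{s_i,e}\De^{\om_i}_{e,s_i}$ has the same $H\times H$-weight as $\prod_{k\neq i}(\De^{\om_k}_{e,e})^{-C_{ki}}$ but vanishes on $H$ and is certainly not proportional to it. Since $G_0=N_-HN_+$, a function is determined by its value at $e$ and its $H$-weight only if it is \emph{also} invariant under left $N_-$- and right $N_+$-translation. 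Establishing this invariance for $f_1=\De^{\om_i}_{e,e}\De^{\om_i}_{s_i,s_i}-\De^{\om_i}_{s_i,e}\De^{\om_i}_{e,s_i}$ is precisely the nontrivial content of the paper's proof: the individual minors $\De^{\om_i}_{e,s_i}$ and $\De^{\om_i}_{s_i,s_i}$ are \emph{not} right $N_+$-invariant (they transform by \cref{eq:desi,eq:desi2}, via the $SL_2$ identity $x_i(t)\ol{s_i}\cdot v_{\om_i}=\ol{s_i}\cdot v_{\om_i}+tv_{\om_i}$), and only the determinant combination is; left $N_-$-invariance is then deduced from $f_1(x)=f_1(\si(x^{-1}))$. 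Your proposal never states, let alone proves, this invariance.

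Your $2\times 2$-compression picture could in fact be upgraded to supply the missing step: the span $V=\langle v_{\om_i}, f_iv_{\om_i}\rangle$ is $B_+$-stable (no weight $\om_i-\al_i+\be$ with $\be$ a positive root other than $\al_i$ occurs in $L(\om_i)$), $N_+$ acts on $V$ unipotently and hence with determinant $1$, so the determinant of the compression $P_V\circ h|_V$ is unchanged under $h\mapsto hn_+$; the bilinear form intertwines left $N_-$-translation with the adjoint action of $N_+$ on $V$, giving left $N_-$-invariance as well. But as written you bypass this entirely and rely on an incorrect uniqueness principle, so the argument does not close.
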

\begin{proof}
It suffices to consider $u=v=e$.  In the case of arbitrary $u,v$, showing both sides are equal when evaluated at some $x \in G$ is then equivalent to showing both sides take the same value at $\ol{u}^{-1}x\ol{v}$ in the identity case.

Let
\[
f_1 = \De^{\om_i}_{e,e}\De^{\om_i}_{s_i,s_i} - \De^{\om_i}_{s_i,e}\De^{\om_i}_{e,s_i}, \quad f_2 = \prod_{\substack{1 \leq k \leq \wt{r} \\ k \neq i}}(\De^{\om_k}_{e,e})^{-C_{ki}}.
\]
We claim that $f_1$ and $f_2$ satisfy the following conditions, where we consider $\BC[G]$ as a $G \times G$ representation as in \cref{prop:sreg}:
\begin{enumerate}
\item
They are invariant under $N_- \times N_+$.
\item
They have weight $(\al_i - 2\om_i, 2\om_i - \al_i)$.
\item
They both evaluate to 1 at the identity.
\end{enumerate}
These conditions uniquely determine a function on the dense subset $G_0$, hence on all of $G$, so together imply the proposition.

The fact that $f_2$ satisfies the given conditions is essentially immediate; for (2) we must recall the definition of $C_{ij}$ for $r \leq j \leq \wt{r}$ in \cref{eq:extcartan}.  Likewise conditions (2) and (3) hold straightforwardly for $f_1$.

We claim then that $f_1$ is invariant under right translations by $N_+$.  Clearly it is invariant under right translation by $x_j(t)$ for $j \neq i$ and $t \in \BC$, so we need only show that it is invariant under right translations by $x_i(t)$.  

It is immediate that $\De_{e,e}^{\om_i}(xx_i(t)) =  \De_{e,e}^{\om_i}(x)$ and $\De_{s,e}^{\om_i}(xx_i(t)) =  \De_{s,e}^{\om_i}(x)$.  We claim further that
\begin{gather} \label{eq:desi}
\De_{e,s_i}^{\om_i}(xx_i(t)) = \De_{e,s_i}^{\om_i}(x) + t\De_{e,e}^{\om_i}(x), \\ \label{eq:desi2}
\De_{s_i,s_i}^{\om_i}(xx_i(t)) = \De_{s_i,s_i}^{\om_i}(x) + t\De_{s_i,e}^{\om_i}(x).
\end{gather}
To see this, first note that for a highest-weight vector $v_{\om_i}$ of $L(\om_i)$ we have
\begin{equation}\label{eq:SL2}
x_i(t)\ol{s_i} \cdot v_{\om_i} = \ol{s_i}\cdot v_{\om_i} + tv_{\om_i}.
\end{equation}
This is a simple computation in $SL_2$ representation theory; when we decompose $L(\om_i)$ as a $\varphi_i(SL_2)$-representation, $v_{\om_i}$ generates a copy of the standard $SL_2$-representation.  But now \cref{eq:desi,eq:desi2} follow immediately in light of \cref{prop:repint}, and we conclude that
\begin{align*}
f_1(xx_i(t)) & = \De_{e,e}^{\om_i}(x)( \De_{s_i,s_i}^{\om_i}(x) + t\De_{s_i,e}^{\om_i}(x) ) - \De_{s_i,e}^{\om_i}(x) (\De_{e,s_i}^{\om_i}(x) + t\De_{e,e}^{\om_i}(x)) \\
& = f_1(x).
\end{align*}

One easily checks that $f_1(x) = f_1(\si(x^{-1}))$, where $\si$ is the automorphism of $G$ induced from \cref{eq:si}. From this the right $N_+$-invariance of $f_1$ implies its left $N_-$-invariance, hence condition (1) indeed holds for $f_1$.
\end{proof}  

\section{Coordinates on Double Bruhat Cells} \label{sec:dbc}

When $G$ is a semisimple algebraic group, each double Bruhat cell $G^{u,v}$ is endowed with several natural families of coordinate systems.  To any double reduced word for $(u,v)$ is associated a parametrization of $G^{u,v}$ by one-parameter simple root subgroups, the definition of which is motivated by the theory of total positivity \cite{Fomin1999}.  In \cite{Fock2006}, a modified version of this parametrization was introduced on the adjoint form of $G$ using coweight subgroups; the resulting coordinates are convenient for working with the standard Poisson bracket, and transform as cluster $\CX$-coordinates as the double reduced word is varied. 

Explicitly describing the inverse maps to these parametrizations amounts to solving certain factorization problems in the group.  In the case of one-parameter simple root subgroups the solution was found in terms of twisted generalized minors in \cite{Fomin1999}.  In \cref{sec:dbcfact} we extend this result to the setting of symmetrizable Kac-Moody groups, after generalizing the various coordinates as necessary in \cref{sec:words}.  In \cref{sec:XandA} we use this to solve the corresponding factorization problem for the coweight parametrization.  In the process we will directly recover the entries of the exchange matrix defined in \cite{Berenstein2005}.

\subsection{Double Reduced Words and Parametrizations}\label{sec:words}

Let $G$ be a symmetrizable Kac-Moody group and $G^{u,v}$ a fixed double Bruhat cell.  A \emph{double reduced word} $\mb{i}=(i_1, \dots, i_m)$ for $(u,v)$ is a shuffle of a reduced word for $u$ written in the alphabet $\{-1,\dots,-r\}$ and a reduced word for $v$ written in the alphabet $\{1,\dots,r\}$.  

\begin{defn}\label{defn:Tmap}
Let $\mb{i}$ be a double reduced word for $(u,v)$, and set $m = \ell(u) + \ell(v)$.  Let $T_{\mb{i}}$ denote the complex torus $(\BC^*)^{m+\wt{r}}$ with coordinates $t_1,\dots,t_{m+\wt{r}}$.  Then we have a map $x_{\mb{i}}:T_{\mb{i}} \to G$ given by
\[
x_{\mb{i}}: (t_i, \dots , t_{m+\wt{r}}) \mapsto x_{i_1}(t_1) \cdots x_{i_m}(t_m)t_{m+1}^{\al^\vee_1}\cdots t_{m+\wt{r}}^{\al^\vee_{\wt{r}}}.
\]
Here $x_i(t)$ and $x_{-i}(t)$ denote the one-parameter subgroups corresponding to $\al_i$ and $-\al_i$, respectively.  When $G$ is an algebraic group this was defined in \cite{Fomin1999}, where the following result was also proved.
\end{defn}

\begin{prop}\label{prop:factorization}
The map $x_{\mb{i}}$ is an open immersion from $T_{\mb{i}}$ to $G^{u,v}$.
\end{prop}
\begin{proof}
First we show that the image of $x_{\mb{i}}$ is contained in $G^{u,v}$.  For each $1 \leq i \leq r$, we have $x_i(t) \in \CB_+$ and $x_{-i}(t) \in \CB_+ s_i \CB_+$.  Thus if ${k_1 < \dots < k_{\ell(u)}} \subset \{ 1,\dots,m \}$ are the indices of the negative entries in $\mb{i}$, 
\[
x_{\mb{i}}(t_1, \dots , t_{m+\wt{r}}) \in \CB_+ \cdots \CB_+ s_{i_{k_1}} \CB_+ \cdots \CB_+ s_{i_{k_{\ell(u)}}} \CB_+ \cdots \CB_+.
\]
Recall that for $w, w' \in W$, 
\[
\CB_+ w \CB_+ \cdot \CB_+ w' \CB_+ = \CB_+ w w' \CB_+
\]
whenever $\ell(ww') = \ell(w) + \ell(w')$ \cite[5.1.3]{Kumar2002}.  Thus in particular $x_{\mb{i}}(t_1, \dots , t_{m+\wt{r}}) \in \CB_+ u \CB_+$, and by the same argument $x_{\mb{i}}(t_1, \dots , t_{m+\wt{r}}) \in \CB_- v \CB_-$.

Suppose that
\[
x_{\mb{i}}(t_1, \dots , t_{m+\wt{r}}) = x_{\mb{i}}(t'_1, \dots , t'_{m+\wt{r}})
\] but $(t_1, \dots , t_{m+\wt{r}}) \neq (t'_1, \dots , t'_{m+\wt{r}})$, and let $k$ be the smallest index such that $t_k \neq t'_k$.  If $k > m$ this is a contradiction, since an element of $H$ factors uniquely as a product of coroot subgroups.  

On the other hand, if $k \leq m$, then $\mb{i}' := (i_k, \dots ,i_m)$ is a double reduced word for some $(u',v')$, and $x_{\mb{i}'}(t_k, \dots , t_{m+\wt{r}}) = x_{\mb{i}'}(t'_k, \dots , t'_{m+\wt{r}})$.  Multiplying both sides on the left by $x_{i_k}(-t'_k)$, we obtain
\[
x_{\mb{i}'}(t_k-t'_k, \dots , t_{m+\wt{r}}) = x_{\mb{i}''}(t'_{k+1}, \dots , t'_{m+\wt{r}}),
\]
where $\mb{i}'':=(i_{k+1}, \dots, i_m)$.  But by the first part of the proposition the left and right sides lie in different double Bruhat cells, hence by contradiction $x_{\mb{i}}$ must be injective.  But an injective regular map between smooth complex varieties of the same dimension is an open immersion, and the proposition follows.
\end{proof}

A closely related family of parametrizations was introduced in \cite{Fock2006} for semisimple algebraic groups.  Whereas so far we have taken $G$ to be simply-connected, to describe these $\CX$-coordinates we must consider its adjoint version.  When the Cartan matrix is not of full rank and the center of $G$ is positive-dimensional, we will abuse terminology and use $G_{\Ad}$ to denote a variant of the adjoint group.

Recall from \cref{sec:KMalg} that the fundamental weight basis of $P$ induces a dual basis of the cocharacter lattice $\Hom(\BC^*,H)$. We denote it by $\al_1^\vee,\dots,\al_{\wt{r}}^\vee$ since the first $r$ are exactly the coroots of $G$.  In parallel with this we define elements $\al_{r+1},\dots,\al_{\wt{r}}$ of $P$ by 
\[
\al_i = D \sum_{j=1}^r d_j^{-1}C_{ij}\om_j,
\]
where $D$ is the least common integer multiple of $d_1,\dots,d_r$.  Then $\oplus_{1\leq i \leq \wt{r}}\BZ \al_i$ is a full rank sublattice of $P$, and its kernel $\{h \in H | h^{\al_i} =1, 1 \leq i \leq \wt{r} \}$ is a discrete subgroup of the center of $G$.  We let $G_{\Ad}$ denote the quotient of $G$ by this discrete subgroup.  Of course, if $C$ has full rank this is exactly the adjoint form of $G$.

If $H_{\Ad}$ is the image of $H$ in $G_{\Ad}$, the character lattice of $H_{\Ad}$ is canonically isomorphic with $\oplus_{1\leq i \leq \wt{r}}\BZ \al_i$.  In particular, the cocharacter lattice of $H_{\Ad}$ inherits a dual basis $\om_1^\vee,\dots,\om_{\wt{r}}^\vee$ of \emph{fundamental coweights} such that $\langle \al_i | \om_j^\vee \rangle = \de_{i,j}$ for $1 \leq i,j \leq \wt{r}$.  We will denote elements of the corresponding one-parameter subgroups of $H_{\Ad}$ by $t^{\om_i^\vee}$, where $t \in \BC^*$; in other words, $t^{\om_i^\vee}$ is defined so that
\[
(t^{\om_i^\vee})^{\al_j} = t^{\de_{ij}}.
\]
We can now define $C_{ij} := \langle \al_j | \al^\vee_i \rangle$ for all $1 \leq i,j\leq \wt{r}$.  The definitions of $\al_i$ for $i > r$ are chosen exactly to obtain the following proposition, which the reader may easily verify.

\begin{prop}
The $\wt{r} \times \wt{r}$ integer matrix with entries $C_{ij}$ is nondegenerate and symmetrizable (with $d_i = D$ for $i > r$).  Moreover, the coweights and coroots are related by
\[
\al_i^\vee = \sum_{j = 1}^{\wt{r}} C_{ij} \om_j^\vee.
\]
\end{prop}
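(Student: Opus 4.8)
The plan is to deduce all three assertions from the single formula $C_{ij} = \langle \al_j | \al_i^\vee \rangle$ (for $1 \le i,j \le \wt{r}$) together with the defining relations of the various bases. Throughout I would record the extended matrix in the block form
\[
C = \begin{pmatrix} A & B \\ E & 0 \end{pmatrix},
\]
where $A = (C_{ij})_{i,j \le r}$ is the original generalized Cartan matrix, $E = (C_{ij})_{r < i,\, j \le r}$ collects the coefficients appearing in \cref{eq:extcartan}, and $B = (C_{ij})_{i \le r < j}$. A short computation from the definition $\al_i = D\sum_{j \le r} d_j^{-1} C_{ij}\om_j$ for $i > r$, combined with $\langle \om_k | \al_i^\vee \rangle = \de_{ki}$, shows $C_{ij} = D d_i^{-1} C_{ji}$ when $i \le r < j$ and $C_{ij} = 0$ when $i,j > r$; in matrix form $B = D S^{-1} E^{\mathsf{T}}$ and the lower-right block vanishes, with $S = \mathrm{diag}(d_1,\dots,d_r)$.

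I would dispatch the coroot–coweight relation first, as it is purely formal. Pairing both sides of $\al_i^\vee = \sum_j C_{ij}\om_j^\vee$ — read in $\Hom(\BC^*, H_{\Ad})$ after pushing $\al_i^\vee$ forward along $H \to H_{\Ad}$ — against the basis $\{\al_k\}$ of the character lattice of $H_{\Ad}$, the left side evaluates (since pushforward of cocharacters is adjoint to restriction of characters) to $\langle \al_k | \al_i^\vee \rangle = C_{ik}$, while the right side gives $\sum_j C_{ij}\langle \al_k | \om_j^\vee \rangle = \sum_j C_{ij}\de_{kj} = C_{ik}$ by the defining duality $\langle \al_i | \om_j^\vee \rangle = \de_{ij}$. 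As characters separate cocharacters, this proves the identity. Symmetrizability with $d_i = D$ for $i > r$ is the statement that $\wt{S}\,C$ is symmetric, where $\wt{S} = \mathrm{diag}(d_1,\dots,d_r,D,\dots,D)$. From the block description $\wt{S}\,C = \begin{pmatrix} SA & SB \\ DE & 0 \end{pmatrix}$: the block $SA$ is symmetric by the original relation $d_i C_{ij} = d_j C_{ji}$ ($i,j \le r$), the lower-right block is zero, and $SB = S \cdot D S^{-1} E^{\mathsf{T}} = D E^{\mathsf{T}} = (DE)^{\mathsf{T}}$, so the off-diagonal blocks are transposes of one another. (Equivalently one checks $d_i C_{ij} = d_j C_{ji}$ in the three cases $i,j \le r$; $i \le r < j$; $i,j > r$ using the formulas above.)

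The one assertion requiring an actual idea is nondegeneracy, and here the cleanest route is to recognize $C$ as a Gram matrix. The pairing $\Hom(\BC^*,H) \times P \to \BZ$ is perfect, with $\{\al_i^\vee\}$ and $\{\om_j\}$ dual bases. Restricting the second factor to the sublattice $\bigoplus_i \BZ\al_i \subseteq P$ — which is stated to be of full rank $\wt{r}$ — and extending scalars to $\BQ$ leaves a nondegenerate pairing, since a finite-index sublattice spans the same $\BQ$-vector space. Its Gram matrix in the bases $\{\al_i^\vee\}$ and $\{\al_j\}$ has $(i,j)$ entry $\langle \al_j | \al_i^\vee \rangle = C_{ij}$, so $C$ represents a nondegenerate form and is therefore invertible. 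The substantive input, hidden in the full-rank claim, is the relation $\dim\hh = \wt{r} = 2r - \rank(C)$, which guarantees that the simple roots together with the auxiliary weights $\al_{r+1},\dots,\al_{\wt{r}}$ span $P \ot \BQ$; without it the extended matrix would be singular. Should a hands-on argument be preferred, solving $\wt{S}\,C\binom{x}{y} = 0$ also works: pairing the first block-equation with elements of $\ker A$ and using $\ker A \cap \ker E = 0$ (full column rank of $\binom{A}{E}$, i.e.\ linear independence of the simple roots) forces $y = 0$ and then $x = 0$.
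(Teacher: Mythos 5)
The paper gives no proof of this proposition (it is explicitly left to the reader), so there is nothing to compare against; your verification is correct and complete. The coroot--coweight identity and the symmetrizability check are exactly the routine computations intended, and your block bookkeeping $B = DS^{-1}E^{\mathsf{T}}$ with vanishing lower-right block is right. The one point worth flagging is that your primary argument for nondegeneracy --- reading $C$ as the Gram matrix of the perfect pairing restricted to $\bigoplus_i \BZ\al_i$ --- leans on the paper's assertion that this sublattice has full rank, and that assertion is \emph{logically equivalent} to the nondegeneracy of $C$: the columns of $C$ are precisely the coordinate vectors of $\al_1,\dots,\al_{\wt{r}}$ in the $\om$-basis. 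So by itself that route only relocates the content. You correctly identify this and supply the genuine input: linear independence of the simple roots (i.e.\ $\ker A \cap \ker E = 0$) together with $\dim\ker A = \wt{r}-r$, which makes $E|_{\ker A}$ a bijection onto $\BQ^{\wt{r}-r}$; your hands-on computation then closes the argument (pairing $SAx + DE^{\mathsf{T}}y=0$ against $z\in\ker A$ kills the first term by symmetry of $SA$ and forces $y=0$ by surjectivity of $E|_{\ker A}$, after which $x\in\ker A\cap\ker E=0$). With that second argument included, the proof is self-contained and correct.
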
 

\begin{example}
Let $G$ be the untwisted affine Kac-Moody group corresponding to a simply-connected simple algebraic group $\mathring{G}$.  That is, $G$ is the semidirect product of $\BC^*$ and the universal central extension of the group of regular maps from $\BC^*$ to $\mathring{G}$.  Then the center $Z(\mathring{G})$ of $\mathring{G}$ sits inside $G$ as constant maps, and we may choose the fundamental coweights so that $G_{\Ad} = G/Z(\mathring{G})$.
\end{example}

\begin{defn}\label{defn:Xdef}
Let $\mb{i} = (i_1,\dots,i_m)$ be a double reduced word for $(u,v)$, and let $I$ denote the index set $I=\{-\wt{r},\dots,-1\} \cup \{1,\dots,m\}$.  Let $\CX_{\mb{i}}$ denote the torus $(\BC^*)^I$ with coordinates $\{X_i\}_{i \in I}$.  We will write $E_{i} := x_i(1)$ for $i \in \{\pm 1,\dots,\pm r \}$. Then we have a map $x_{\mb{i}}: \CX_{\mb{i}} \to G_{\Ad}^{u,v}$ given by
\begin{gather*}\label{eq:Xmap}
x_{\mb{i}}: (X_{-\wt{r}},\dots,X_m) \mapsto X^{\om_{\wt{r}}^\vee}_{-\wt{r}}\cdots X^{\om_1^\vee}_{-1} E_{i_1} X^{\om_{|i_1|}^\vee}_1 \cdots E_{i_j} X^{\om_{|i_j|}^\vee}_j\cdots E_{i_m}X^{\om_{|i_m|}^\vee}_m.
\end{gather*}
\end{defn}
Though we have also used $x_{\mb{i}}$ to denote the map of \cref{defn:Tmap}, it will always be clear from the context which we mean.  The following proposition may be deduced straightforwardly from \cref{prop:factorization}.  
\begin{prop}\label{prop:fincov}
The map $x_{\mb{i}}: \CX_{\mb{i}} \to G_{\Ad}^{u,v}$ is an open immersion.  Moreover, the restriction of the quotient map $\pi_G: G^{u,v} \to G^{u,v}_{\Ad}$ to $T_{\mb{i}}$ is a finite covering of $\CX_{\mb{i}}$.
\end{prop}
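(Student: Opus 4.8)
The plan is to deduce everything from the simply-connected parametrization of \cref{prop:factorization} by commuting the fundamental-coweight torus factors of \cref{defn:Xdef} to the right. Write $x^{T}$ and $x^{\CX}$ for the two maps denoted $x_{\mb{i}}$ in \cref{defn:Tmap} and \cref{defn:Xdef}, and let $\pi_H\colon H\to H_{\Ad}$ be the restriction of $\pi_G$. Using $h\,x_i(s)\,h^{-1}=x_i(h^{\mathrm{wt}(i)}s)$, where $\mathrm{wt}(i)$ is $\al_i$ or $-\al_{|i|}$ according to the sign of $i$, I would push each factor $X_j^{\om_{|i_j|}^\vee}$ past the elements $E_{i_k}=x_{i_k}(1)$ to its right. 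This rewrites $x^{\CX}(X)$ as $x_{i_1}(t_1)\cdots x_{i_m}(t_m)\,P_m$, where $P_m\in H_{\Ad}$ is the product of all coweight factors and each $t_j$ is an explicit Laurent monomial in the $X_i$. Thus both parametrizations factor through the single reduced map
\[
F\colon (\BC^*)^m\times H_{\Ad}\to G_{\Ad}^{u,v},\qquad (s_1,\dots,s_m,\bar h)\mapsto x_{i_1}(s_1)\cdots x_{i_m}(s_m)\,\bar h,
\]
namely $x^{\CX}=F\circ q$ for an explicit monomial map $q\colon \CX_{\mb{i}}\to(\BC^*)^m\times H_{\Ad}$, while $\pi_G\circ x^{T}=F\circ(\mathrm{id}\times\pi_H)$. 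The proposition then follows once I establish (i) $q$ is an isomorphism of tori, (ii) $F$ is an open immersion, and (iii) $\mathrm{id}\times\pi_H$ is a finite covering.

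For (i), I would invert $q$ color by color. Fixing $1\le c\le\wt{r}$ and listing the positions $j_1<\cdots<j_{p_c}$ of $\mb{i}$ with $|i_{j_a}|=c$, the monomial formulas read $t_{j_a}=(X_{-c}X_{j_1}\cdots X_{j_{a-1}})^{\pm1}$ together with $P_m^{\al_c}=X_{-c}X_{j_1}\cdots X_{j_{p_c}}$. These equations are triangular, so $X_{-c}$ and each $X_{j_a}$ are recovered as Laurent monomials in the $t_{j_a}$ and $P_m^{\al_c}$; since the colors partition the index set $I$, this exhibits a monomial inverse and $q$ is an isomorphism.

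For (ii) the key point is injectivity of $F$. If $F(s,\bar h)=F(s',\bar h')$, I would lift $\bar h,\bar h'$ to $h,h'\in H$; the two lifted products then agree in $G$ up to an element $z$ of the finite central subgroup $\ker\pi_G\subset H$. As $z$ is central it may be absorbed into the $H$-factor, so after replacing $h$ by $zh$ the equality holds in $G$, and the injectivity of $x^{T}$ from \cref{prop:factorization} forces $s=s'$ and $h'=zh$, whence $\bar h'=\bar h$. Since $\mathrm{id}\times\pi_H$ is surjective, the image of $F$ equals $\pi_G(x^{T}(T_{\mb{i}}))\subseteq G_{\Ad}^{u,v}$, and $F$ maps between smooth varieties of the same dimension $m+\wt{r}$ (using \cref{prop:fdimdbc} and that $\pi_G$ is finite); the same ``injective morphism of equidimensional smooth varieties'' principle invoked in \cref{prop:factorization} then shows $F$ is an open immersion, and composing with the isomorphism $q$ yields the first assertion.

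Finally, $\ker\pi_G=\{h\in H\mid h^{\al_i}=1,\ 1\le i\le\wt{r}\}$ is the kernel of the isogeny $H\to(\BC^*)^{\wt{r}}$ cut out by the full-rank characters $\al_i$, hence finite, so $\pi_H$ and therefore $\mathrm{id}\times\pi_H$ are finite coverings, giving (iii). Identifying $T_{\mb{i}}$ and $\CX_{\mb{i}}$ with their images, $\pi_G|_{T_{\mb{i}}}$ corresponds under $x^{T}$ and $x^{\CX}$ to $q^{-1}\circ(\mathrm{id}\times\pi_H)$, a finite covering, which is the second assertion. I expect the main obstacle to be the bookkeeping around the center: one must correctly identify $H_{\Ad}$ and the isogeny $\pi_H$ in the non-full-rank case, and verify that collapsing by $\ker\pi_G$ does not destroy the injectivity inherited from the simply-connected parametrization.
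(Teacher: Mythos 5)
Your argument is correct and is precisely the deduction the paper has in mind: the paper offers no written proof (it only asserts the statement "may be deduced straightforwardly from \cref{prop:factorization}"), and your monomial change of variables $t_k=\prod_{j<k,\,|i_j|=|i_k|}X_j^{\ep_k}$ is exactly the substitution the paper itself writes down later in the proof of \cref{prop:XtoA}. The triangular inversion of $q$, the lift-and-absorb-the-center argument for injectivity of $F$, and the finiteness of $\ker\pi_G$ from the full-rank sublattice $\oplus_i\BZ\al_i$ all check out, so nothing is missing.
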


In particular, the $t_i$ and  $X_i$ may be regarded as implicitly defined rational coordinates on $G^{u,v}$ and $G^{u,v}_{\Ad}$.  In \cite{Fomin1999}, the former coordinates were explicitly described in the semisimple case in terms of a certain family of generalized minors whose definition we now recall.

Given an index $1 \leq k \leq m$ and a double reduced word $\mb{i}$, we define two Weyl group elements
\[
u_{<k} := s^{\frac12(1-\ep_1)}_{i_1} \cdots s^{\frac12(1-\ep_{(k-1)})}_{i_{(k-1)}}, \quad v_{>k} :=  s^{\frac12(\ep_n+1)}_{i_n} \cdots s^{\frac12(\ep_{(k+1)}+1)}_{i_{k+1}},
\]
where $\ep_k$ is equal to $1$ if $i_k >0$ and $-1$ if $i_k < 0$.  In short, $u_{<k}$ is the part of the reduced word for $u$ whose indices in $\mb{i}$ are less than $k$, and $v_{>k}$ is the inverse of the part of the reduced word for $v$ whose indices in $\mb{i}$ are greater than $k$.  For purposes of the following definition, we will also set $v_{>k} = v^{-1}$ if $k < 0$.

\begin{defn}\label{defn:Aminors}
If $\mb{i} = (i_1,\dots,i_m)$ is a double reduced word for $(u,v)$, let $I$ denote the index set $\{-\wt{r},\dots,-1\} \cup \{1,\dots,m\}$ and let $i_k = k$ for $k<0$.  Then to each $k \in I$ we associate a generalized minor
\[
A_{k,\mb{i}} := \De^{\om_{|i_k|}}_{u_{\leq k},v_{>k}}.
\]
When the choice of double reduced word is clear we will abbreviate this to $A_k$.  
\end{defn}

\begin{remark}
One may define the postive part $G^{u,v}_{>0}$ of $G^{u,v}$ as the image of $\BR^{m+\wt{r}}_{>0} \subset T_{\mb{i}}$ in $G^{u,v}$; when $G$ is a semisimple algebraic group this is an important object in the theory of total positivity, the study of which motivated the work \cite{Fomin1999}.  Though total positivity will not play a direct role in the present article, we note in passing that the above definition of $G^{u,v}_{>0}$ agrees with the analogous definition in terms of the coweight parametrization.  That is, if $g \in G^{u,v}_{>0}$ it follows straightforwardly that $\pi_G(g) \in G^{u,v}_{\Ad}$ is in the image of $\BR^{m+\wt{r}}_{>0} \subset \CX_{\mb{i}}$.
\end{remark}

\subsection{The Twist Isomorphism}
To precisely describe the relationships among the various coordinates introduced in \cref{sec:words}, we will require a certain isomorphism of inverse double Bruhat cells, called the twist map in \cite{Fomin1999}.  In this section we recall its key properties, which extend readily to the setting of Kac-Moody groups.  

\begin{defn} We write $x \mapsto x^\th$ for the automorphism of $G$ which acts as follows on the Cartan subgroup and Chevalley generators:
\[
a^\th=a^{-1} \quad (a \in H), \quad x_i(t)^\th = x_{-i}(t) \quad (1 \leq i \leq r).
\]
\end{defn}

\begin{defn}
For any $u, v \in W$, the \emph{twist map} $\ze^{u,v}: G^{u,v} \to G^{u^{-1},v^{-1}}$ is defined by
\begin{equation} \label{eq:twistdef}
\ze^{u,v}: x \mapsto \left([\ol{u}^{-1}x]_-^{-1}\ol{u}^{-1}x\ol{v^{-1}}[x\ol{v^{-1}}]_+^{-1}\right)^\th.
\end{equation}
\end{defn}

\begin{prop} \label{prop:twist}
The twist map $\ze^{u,v}$ is an isomorphism of $G^{u,v}$ and $G^{u^{-1},v^{-1}}$ whose inverse is $\ze^{u^{-1},v^{-1}}$.
\end{prop}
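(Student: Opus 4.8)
The plan is to show directly that $\ze^{u^{-1},v^{-1}}\circ\ze^{u,v}$ is the identity on $G^{u,v}$; by the symmetry of the defining formula \cref{eq:twistdef} under swapping $(u,v)\leftrightarrow(u^{-1},v^{-1})$, the same computation with the roles reversed then shows $\ze^{u,v}\circ\ze^{u^{-1},v^{-1}}=\mathrm{id}$, so each is a regular inverse of the other and both are isomorphisms. First I would record that $\ze^{u,v}$ is a well-defined regular map: for $x\in G^{u,v}=B_+\ol{u}B_+\cap B_-\ol{v}B_-$, \cref{cor:FZ2.9/10} guarantees $\ol{u}^{-1}x\in G_0$ and $x\ol{v^{-1}}\in G_0$ (the latter since $x\in B_-\ol{v}B_-=B_-\ol{v}N_-(v)$ gives $x\ol{v}^{-1}\in B_-N'_-(v)\subset G_0$), so the factors $[\ol{u}^{-1}x]_-$ and $[x\ol{v^{-1}}]_+$ appearing in \cref{eq:twistdef} are regular by \cref{prop:gaussian}, and $\th$ is an automorphism. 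The image lands in $G^{u^{-1},v^{-1}}$: the element inside the parentheses lies in $B_-\ol{u}^{-1}\cdots\cap\cdots\ol{v^{-1}}B_+$ up to the $N_\pm$ corrections, and applying $\th$ (which swaps $B_+\leftrightarrow B_-$ and sends $\ol{w}$ essentially to a representative of $w$) places it in the correct double Bruhat cell; this Bruhat-cell bookkeeping is routine but should be stated.

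The heart of the argument is the composition identity. Writing $y=\ze^{u,v}(x)$, I would unwind $\ze^{u^{-1},v^{-1}}(y)$ by substituting the definition of $y$ and simplifying. The key algebraic facts to exploit are: $\th$ is an involutive automorphism, so $(z^\th)^\th=z$; $\th$ interchanges $N_+$ and $N_-$ and sends $H$ to itself by inversion, so it is compatible with Gaussian factorization in the sense that $[z^\th]_\pm=([z]_\mp)^\th$ and $[z^\th]_0=([z]_0)^\th$; and the Weyl-group representatives satisfy $\ol{w}^\th=\ol{w^{-1}}^{\,\pm1}$-type relations coming from \cref{eqn:simplerootformula}. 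Threading these through, the $N_-$ and $N_+$ correction factors introduced at the first application of the twist are precisely the ones stripped off at the second application, so everything telescopes back to $x$.

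The main obstacle will be controlling the Weyl-group representatives $\ol{u},\ol{v}$ under $\th$ and under the inversion $u\mapsto u^{-1}$, since $\ol{\,\cdot\,}$ is only a set-theoretic section of $W\cong N_G(H)/H$ and $\ol{u}^{-1}$ need not equal $\ol{u^{-1}}$ on the nose --- they differ by a computable element of $H$, governed by \cref{eqn:simplerootformula}. Tracking these torus discrepancies and verifying they cancel is the delicate point; I expect this to reduce to the identities $\ol{s_i}^{\,\th}=\ol{s_i}$ and $\ol{s_i}^{-1}=\dol{s_i}$ read off from \cref{eqn:simplerootformula}, extended multiplicatively along a reduced word. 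Since $\ze^{u,v}$ is a morphism between irreducible varieties of the same dimension (\cref{prop:fdimdbc}), an alternative and possibly cleaner route is to verify the composition is the identity only on the dense positive part or on a dense open subset where all Gaussian factorizations are simultaneously defined, and then invoke density; I would keep this fallback in mind should the direct global cancellation prove unwieldy.
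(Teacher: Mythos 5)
Your plan is essentially the paper's proof: it too verifies directly that $\ze^{u^{-1},v^{-1}}\circ\ze^{u,v}=\mathrm{id}$, organizing the computation around the explicit formulas $[x'\ol{v}]_+=(\pi_-(x)^{-1})^\th$ and $[\dol{u}x']_-=(\pi_+(x)^{-1})^\th$, which simultaneously show that the image lands in $G^{u^{-1},v^{-1}}$ and supply exactly the data needed to evaluate $\ze^{u^{-1},v^{-1}}$ on $x'$. One small correction to your anticipated identities: $\ol{s_i}^{\th}=\dol{s_i}=\ol{s_i}^{-1}$, not $\ol{s_i}$, so the relation actually used is $\ol{u^{-1}}^{-1}=\dol{u}$ --- which is why the second formula above is stated for $[\dol{u}x']_-$.
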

\begin{proof}
That $\ze^{u,v}$ is well-defined on $G^{u,v}$ follows from \cref{cor:FZ2.9/10}.  To see that $x' = \ze^{u,v}(x) \in B_- \dot{v}^{-1} B_-$, we simplify \cref{eq:twistdef} as
\[
x' = \left( [\ol{u}^{-1} x]_0 [\ol{u}^{-1} x]_+ y_-^{-1} \right)^\th \ol{v}^{-1} \in G_0 \ol{v}^{-1},
\]
where $y_- = \pi_-(x)$ as in \cref{cor:FZ2.9/10}.
In particular, 
\begin{equation} \label{eq:x'v}
[x' \ol{v}]_+ = (y_-^{-1})^\th \in N_-(v)^\th = N_+(v^{-1}),
\end{equation}
hence $x' \in B_- \dot{v}^{-1} B_-$.  Similarly one can see that
\begin{equation} \label{eq:ux'}
[\dol{u} x']_- = (y_+^{-1})^\th \in N_-(u^{-1}),
\end{equation}
hence $x' \in B_+ \dot{u}^{-1} B_+$.  But now the fact that $\ze^{u,v}$ and $\ze^{u^{-1},v^{-1}}$ are inverse to each other follows from plugging our expressions for $[x' \ol{v}]_+$ and $[\dol{u} x']_-$ into the definition of $\ze^{u^{-1},v^{-1}}$ and simplifying.
\end{proof}

\begin{prop} \label{prop:twist_0}
The twist map $\ze^{u,v}$ restricts to an isomorphism of the open sets $G_0^{u,v}$ and $G_0^{u^{-1},v^{-1}}$.  Moreover, if $x \in G_0^{u,v}$, $x' = \ze^{u,v}(x)$, we have
\begin{equation} \label{eq:x'_0}
[x']_0 = [\ol{u}^{-1} x]_0^{-1} [x]_0 [x \ol{v^{-1}}]_0^{-1}.
\end{equation}
\end{prop}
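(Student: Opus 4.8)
The plan is to prove the displayed formula for $[x']_0$ by computing the principal minors $\De^{\om_i}(x')$ for $1 \leq i \leq \wt{r}$ and showing that
\[
\De^{\om_i}(x') = \De^{\om_i}(\ol{u}^{-1}x)^{-1}\,\De^{\om_i}(x)\,\De^{\om_i}(x\ol{v^{-1}})^{-1}.
\]
Since $x$, $\ol{u}^{-1}x$, and $x\ol{v^{-1}}$ all lie in $G_0$ by \cref{cor:FZ2.9/10}, the right-hand side is a nonzero scalar for every $i$, so this single identity does double duty: it forces $\De^{\om_i}(x') \neq 0$ for all $i$, whence $x' \in G_0$ by \cref{prop:G_0eq}, and, using $\De^{\om_i}(g) = [g]_0^{\om_i}$ from \cref{prop:repint} together with the fact that $\om_1,\dots,\om_{\wt{r}}$ form a basis of the character lattice of $H$, it yields \cref{eq:x'_0}. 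The assertion that $\ze^{u,v}$ restricts to an isomorphism $G_0^{u,v} \to G_0^{u^{-1},v^{-1}}$ then follows from \cref{prop:twist}: that result already gives an isomorphism $G^{u,v} \to G^{u^{-1},v^{-1}}$ with inverse $\ze^{u^{-1},v^{-1}}$, and applying the minor computation to both maps shows each carries the big-cell locus into the big-cell locus.

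To establish the minor identity I would first rewrite the element being twisted. Writing $z$ for the expression inside the parentheses in \cref{eq:twistdef}, so that $x' = z^\th$, I substitute the Gaussian factorizations of $\ol{u}^{-1}x$ and $x\ol{v^{-1}}$ to obtain
\[
z = [\ol{u}^{-1}x]_0\,[\ol{u}^{-1}x]_+\;x^{-1}\;[x\ol{v^{-1}}]_-\,[x\ol{v^{-1}}]_0,
\]
an identity that uses only these two factorizations and no manipulation of the Weyl representatives. Applying $\th$ and recalling that $\th$ inverts $H$ and interchanges $N_+$ and $N_-$, the outer factors of $x'$ become $[\ol{u}^{-1}x]_0^{-1} \in H$ and $[\ol{u}^{-1}x]_+^\th \in N_-$ on the left, and $[x\ol{v^{-1}}]_-^\th \in N_+$ and $[x\ol{v^{-1}}]_0^{-1} \in H$ on the right. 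I would then evaluate $\De^{\om_i}(x') = \langle v_{\om_i} \,|\, x'\,v_{\om_i}\rangle$ by peeling off these factors one at a time, using that $v_{\om_i}$ is fixed by $N_+$ and is an $H$-eigenvector, and that the contravariant form is invariant under left translation by $N_-$ (because the adjoint of $N_-$ is $N_+$). Concretely, the right-hand $H$-factor contributes $[x\ol{v^{-1}}]_0^{-\om_i} = \De^{\om_i}(x\ol{v^{-1}})^{-1}$, the adjacent $N_+$-factor drops out, the left-hand $H$-factor contributes $[\ol{u}^{-1}x]_0^{-\om_i} = \De^{\om_i}(\ol{u}^{-1}x)^{-1}$, and the adjacent $N_-$-factor drops out, leaving the central term $\langle v_{\om_i} \,|\, (x^{-1})^\th\,v_{\om_i}\rangle$.

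The crux, and the step I expect to require the most care, is to identify this central term with $\De^{\om_i}(x)$, since it is precisely where the difference between $\th$ and the involution $\si$ of \cref{eq:si} enters. The anti-automorphism $g \mapsto \si(g^{-1})$ fixes $H$ pointwise and sends $x_i(t) \mapsto x_{-i}(t)$, and is exactly the adjoint (transpose) with respect to the contravariant form, so symmetry of the form gives $\langle v_{\om_i} \,|\, \si(g^{-1})\,v_{\om_i}\rangle = \langle g\,v_{\om_i} \,|\, v_{\om_i}\rangle = \De^{\om_i}(g)$. Since $\th$ differs from $\si$ only by conjugation by a fixed element $\eta \in H$ (with $\eta^{\al_j} = -1$ for $1 \leq j \leq r$), we have $(x^{-1})^\th = \eta^{-1}\,\si(x^{-1})\,\eta$, and conjugation by $\eta \in H$ leaves $\De^{\om_i}$ unchanged because the two resulting characters $\eta^{\pm\om_i}$ cancel. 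Hence $\langle v_{\om_i} \,|\, (x^{-1})^\th\,v_{\om_i}\rangle = \De^{\om_i}(x)$, which closes the minor identity and, by the reduction of the first paragraph, proves the proposition.
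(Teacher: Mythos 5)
Your proof is correct, and it shares the paper's key first step --- rewriting the twisted element as $\bigl([\ol{u}^{-1}x]_0[\ol{u}^{-1}x]_+\,x^{-1}\,[x\ol{v^{-1}}]_-[x\ol{v^{-1}}]_0\bigr)^\th$ --- but diverges from there. The paper finishes in one line by ``taking the Cartan part of each side,'' which implicitly uses that $x^{-1}\in N_+HN_-$ for $x\in G_0$, that the displayed product therefore lies in $N_+HN_-$ with Cartan part $[\ol{u}^{-1}x]_0[x]_0^{-1}[x\ol{v^{-1}}]_0$, and that $\th$ carries $N_+HN_-$ to $G_0=N_-HN_+$ while inverting $H$. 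You instead compute each principal minor $\De^{\om_i}(x')$ via the contravariant form and conclude by \cref{prop:G_0eq} and \cref{prop:repint}. Your route is longer but buys two things: the membership $x'\in G_0$ is genuinely \emph{proved} (from nonvanishing of the minors) rather than read off, and the discrepancy between $\th$ and the form's adjoint $g\mapsto\si(g^{-1})$ is handled explicitly, whereas the paper leaves the equivalent identity $\De^{\om_j}((g^{-1})^\th)=\De^{\om_j}(g)$ as a ``one can check'' in the proof of \cref{prop:newlem}. All the individual steps check out: the adjoint of $x_i(t)$ with respect to the form built from $\si$ is indeed $x_{-i}(t)$, the two anti-automorphisms $g\mapsto(g^{-1})^\th$ and $g\mapsto\si(g^{-1})$ agree on $H$ and differ by the sign of the unipotent parameter, and conjugation by $\eta$ contributes cancelling factors $\eta^{\pm\om_i}$. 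The only loose end is the existence of $\eta\in H$ with $\eta^{\al_j}=-1$ for all $1\leq j\leq r$; this is immediate because the simple roots are linearly independent in the character lattice of the torus $H$ and $\BC^*$ is divisible, so the assignment $\al_j\mapsto -1$ extends to all of $P$, but you should say so (or simply cite the identity $\De^{\om_i}((g^{-1})^\th)=\De^{\om_i}(g)$ directly, as the paper does elsewhere).
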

\begin{proof}
We can rewrite \cref{eq:twistdef} as 
\[
x' = \left( [\ol{u}^{-1} x]_0 [\ol{u}^{-1} x]_+ x^{-1} [x \ol{v^{-1}}]_- [x \ol{v^{-1}}]_0 \right)^\th,
\]
and the proposition follows from taking the Cartan part of each side.
\end{proof}

If  $w = s_{i_1}\cdots s_{i_{\ell(w)}}$ is a reduced word for $w \in W$, we define Weyl group elements
\[
w_{<k} := s_{i_1}\cdots s_{i_{k-1}}, \quad w_{>k} := s_{i_{\ell(w)}}\cdots s_{i_k},
\]
and similarly $w_{\leq k}$, $w_{\geq{k}}$.  

\begin{prop} \label{prop:newlem}
If $x \in G_0^{u,v}$, $x' = \ze^{u,v}(x)$, and $1 \leq j \leq \wt{r}$,
\[
\De_{v_{>k},e}^{\om_j}(y_-) = \frac{\De^{\om_j}_{e,v_{\leq k}}(x')}{\De^{\om_j}_{e,v}(x')}, \quad \De^{\om_j}_{e,u_{<k}}(y_+) = \frac{\De^{\om_j}_{u_{\geq k },e}(x')}{\De^{\om_j}_{u^{-1},e}(x')}.
\]
\end{prop}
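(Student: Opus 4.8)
The plan is to reduce both identities to two elementary facts — one about the principal minors $\De^{\om_j}=\De^{\om_j}_{e,e}$, one about the automorphism $\th$ — and then feed in the refined factorizations produced in the proof of \cref{prop:twist}. Recall from that proof that, with $x'=\ze^{u,v}(x)$, one has the Gaussian factorizations $x'\ol v=[x'\ol v]_-[x'\ol v]_0(y_-^{-1})^\th$ and $\dol u\, x'=(y_+^{-1})^\th[\dol u\, x']_0[\dol u\, x']_+$, where $[x'\ol v]_+=(y_-^{-1})^\th\in N_+(v^{-1})$ and $[\dol u\, x']_-=(y_+^{-1})^\th\in N_-(u^{-1})$ as in \cref{eq:x'v,eq:ux'}; in particular $x'\ol v\in G_0$ since $x\in G_0^{u,v}$. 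Since the highest weight vector $v_{\om_j}$ is fixed by $N_+$, since $\si(N_-)\subset N_+$, and since the bilinear form pairs $H$ self-adjointly, $\De^{\om_j}$ is left $N_-$-invariant, right $N_+$-invariant, and transforms through the character $\om_j$ under both left and right multiplication by $H$. Thus $\De^{\om_j}(n_- h\, g\, n_+)=h^{\om_j}\De^{\om_j}(g)$ and $\De^{\om_j}(g\,h\,n_+)=h^{\om_j}\De^{\om_j}(g)$ for $n_-\in N_-$, $n_+\in N_+$, $h\in H$.

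First I would record two auxiliary identities for $\th$. Comparing the explicit $SL_2$-representatives in \cref{eqn:simplerootformula} gives $\th(\ol{s_i})=\dol{s_i}$, hence $\th(\ol w)=\dol w$ for every $w\in W$. Next, writing $\th=\Ad(h_0)\circ\si$, where $\si$ is the group automorphism induced by \cref{eq:si} and $h_0\in H$ is any element with $h_0^{\al_i}=-1$ for all $i$, the conjugation $\Ad(h_0)$ is invisible to $\De^{\om_j}$, while contravariance of the form under $\si$ (the relation $\langle\si(g)v\,|\,w\rangle=\langle v\,|\,g^{-1}w\rangle$) yields $\De^{\om_j}(\si(g))=\langle v_{\om_j}\,|\,\si(g)v_{\om_j}\rangle=\langle\si(g)v_{\om_j}\,|\,v_{\om_j}\rangle=\langle v_{\om_j}\,|\,g^{-1}v_{\om_j}\rangle=\De^{\om_j}(g^{-1})$. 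Combining, $\De^{\om_j}(g^\th)=\De^{\om_j}(g^{-1})$.

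With these in hand the first identity is short. Reduced-word bookkeeping (using $\ol{w^{-1}}=\dol w^{-1}$) gives $\ol{v_{\leq k}}=\ol v\,\dol{v_{>k}}$, so $\De^{\om_j}_{e,v_{\leq k}}(x')=\De^{\om_j}(x'\ol v\,\dol{v_{>k}})$. Substituting the factorization of $x'\ol v$ and extracting the Cartan factor $[x'\ol v]_0^{\om_j}=\De^{\om_j}_{e,v}(x')$ via the left invariance of $\De^{\om_j}$ leaves $\De^{\om_j}_{e,v_{\leq k}}(x')/\De^{\om_j}_{e,v}(x')=\De^{\om_j}\big((y_-^{-1})^\th\dol{v_{>k}}\big)$. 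Finally, since $\dol{v_{>k}}=\th(\ol{v_{>k}})$, the argument equals $\th(y_-^{-1}\ol{v_{>k}})$, and the two auxiliary facts give $\De^{\om_j}\big((y_-^{-1})^\th\dol{v_{>k}}\big)=\De^{\om_j}(\ol{v_{>k}}^{-1}y_-)=\De^{\om_j}_{v_{>k},e}(y_-)$, as claimed.

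The second identity is the mirror image. Here $\ol{u^{-1}}^{-1}=\dol u$ and $\ol{u_{\geq k}}^{-1}=\dol{u_{<k}}^{-1}\dol u$, so using $\dol u\, x'=(y_+^{-1})^\th[\dol u\, x']_0[\dol u\, x']_+$ together with right $N_+$-invariance and right $H$-equivariance, the ratio $\De^{\om_j}_{u_{\geq k},e}(x')/\De^{\om_j}_{u^{-1},e}(x')$ collapses to $\De^{\om_j}\big(\dol{u_{<k}}^{-1}(y_+^{-1})^\th\big)=\De^{\om_j}\big(\th(\ol{u_{<k}}^{-1}y_+^{-1})\big)=\De^{\om_j}(y_+\ol{u_{<k}})=\De^{\om_j}_{e,u_{<k}}(y_+)$. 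The hard part will be purely organizational: establishing $\De^{\om_j}(g^\th)=\De^{\om_j}(g^{-1})$ and keeping the two families of Weyl-group representatives $\ol{\cdot}$ and $\dol{\cdot}$ straight, so that at each stage the twisted unipotent factor $(y_\mp^{-1})^\th$ lands on the side ($N_+$, respectively $N_-$) where the invariance of $\De^{\om_j}$ can absorb it and the Cartan factor cancels against the denominator.
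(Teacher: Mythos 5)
Your proof is correct and follows essentially the same route as the paper's: both rest on the twisted unipotent factors $[x'\ol{v}]_+=(y_-^{-1})^\th$ and $[\dol{u}\,x']_-=(y_+^{-1})^\th$ from the proof of \cref{prop:twist}, on the identity $\De^{\om_j}(g^\th)=\De^{\om_j}(g^{-1})$ (which the paper states as $\De^{\om_j}((g^{-1})^\th)=\De^{\om_j}(g)$ and you justify in more detail via $\th=\Ad(h_0)\circ\si$ and contravariance of the form), and on identifying the cancelled Cartan factor with $\De^{\om_j}_{e,v}(x')$ (resp.\ $\De^{\om_j}_{u^{-1},e}(x')$). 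The only difference is organizational: you apply the Gaussian decomposition to $x'\ol{v}$ and $\dol{u}\,x'$ directly, whereas the paper routes the same computation through $y'_\pm=\pi_\pm(x')$ and the Bruhat factorization $x'=b_-\ol{v}^{-1}y'_-$.
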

\begin{proof}
First we claim that if $y_\pm = \pi_\pm(x)$ and $y'_\pm = \pi_\pm(x')$, then
\begin{align*}
y'_+ = \dol{u}^{-1} (y_+^{-1})^\th \dol{u}, \quad
y'_- = \ol{v} (y_-^{-1})^\th \ol{v}^{-1}.
\end{align*}
This follows straightforwardly from \cref{eq:x'v} and \cref{eq:ux'}.

We can use these identities to write
\[
\De_{v_{>k},e}^{\om_j}(y_-) = \De^{\om_j}(\dol{v_{\leq k}}^{-1}\dol{v}y_-) = \De^{\om_j}(\dol{v_{\leq k}}^{-1}(y_-^{'-1})^\th\dol{v}).
\]
One can check that $\De^{\om_j}((g^{-1})^\th) = \De^{\om_j}(g)$ for all $g \in G$, hence
\[
\De^{\om_j}(\dol{v_{\leq k}}^{-1}(y_-^{'-1})^\th\dol{v}) = \De^{\om_j}(\ol{v}^{-1}y'_-\ol{v_{\leq k}}).
\]
By \cref{cor:FZ2.9/10}, $x' = b_- \ol{v}^{-1} y'_-$ for some $b_- \in B_-$.  Then
\[
\De^{\om_j}(\ol{v}^{-1}y'_-\ol{v_{\leq k}}) = \De^{\om_j}(b_-^{-1} x' \ol{v_{\leq k}}) = [b_-]_0^{-\om_j}\De^{\om_j}(x' \ol{v_{\leq k}}).
\]
Now since $\ol{v}^{-1}y'_-\ol{v} \in N_+$,
\[
\De^{\om_j}_{e,v}(x') = \De^{\om_j}(b_- \ol{v}^{-1}y'_-\ol{v}) = [b_-]_0^{\om_j}.
\]
But then
\[
[b_-]_0^{-\om_j}\De^{\om_j}(x' \ol{v_{\leq k}}) = \frac{\De^{\om_j}_{e,v_{\leq k}}(x')}{\De^{\om_j}_{e,v}(x')},
\]
proving the first part of the proposition.  The remaining statement then follows by essentially the same argument.
\end{proof}

\subsection{Factorization in Unipotent Groups}\label{sec:groupfact} 
In \cref{thm:main} we derive expressions for the $t_i$ as Laurent monomials in the twists of the $A_i$, generalizing the main result of \cite{Fomin1999} to the Kac-Moody setting.  The strategy of the proof is the same as in the finite-dimensional case.  We build up to the main theorem by solving a series of more elementary factorization problems, starting with the factorization of the unipotent subgroup $N_-(w)$ as a product of one-parameter subgroups.  This in turn lets us solve the factorization problem for the unipotent cell $N_+^w := N_+ \cap B_- \dot{w} B_-$.  From here we can extract the solution for a general double Bruhat cell by reducing to the case of an ``unmixed'' double reduced word.  

For $w \in W$, recall the unipotent group $N_-(w) = N_- \cap \dot{w}^{-1} N_+ \dot{w}$ and fix a reduced word $w = s_{i_1} \cdots s_{i_n}$.  For short we will write
\[
w_k := w_{\geq k} = s_{i_n}\cdots s_{i_k}.
\]
Now define one-parameter subgroups
\[
y_k(p_k) = \ol{w_{k+1}}x_{-i_k}(p_k)\ol{w_{k+1}}^{-1},
\]
where we take $w_{n+1}=e$.

\begin{lemma}\label{lem:yk}
For any $p_k \in \BC$ we have
\[
\ol{w_m}^{-1}y_k(p_k)\ol{w_m} \in
\begin{cases}
N_- & \quad m > k \\
N_+ & \quad m \leq k. \\
\end{cases}
\]
\end{lemma}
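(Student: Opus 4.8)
The plan is to verify the claim by a direct computation that tracks which Bruhat cell the conjugate $\ol{w_m}^{-1}y_k(p_k)\ol{w_m}$ lives in, using the reduced-word structure to control lengths. Recall that $y_k(p_k) = \ol{w_{k+1}}\,x_{-i_k}(p_k)\,\ol{w_{k+1}}^{-1}$, so that
\[
\ol{w_m}^{-1}y_k(p_k)\ol{w_m} = \ol{w_m}^{-1}\ol{w_{k+1}}\,x_{-i_k}(p_k)\,\ol{w_{k+1}}^{-1}\ol{w_m}.
\]
Since $w_m = s_{i_n}\cdots s_{i_m}$ and $w_{k+1} = s_{i_n}\cdots s_{i_{k+1}}$ are both suffixes of the fixed reduced word, for $m\leq k$ the product $\ol{w_{k+1}}^{-1}\ol{w_m} = \ol{s_{i_{k}}}\cdots\ol{s_{i_m}}$ is a representative of $w_{k+1}^{-1}w_m = s_{i_k}\cdots s_{i_m}$, and this expression is reduced because it is a contiguous subword of the reduced word for $w$. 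Setting $\tilde w := w_{k+1}^{-1}w_m$, the element becomes $\ol{\tilde w}^{-1}x_{-i_k}(p_k)\ol{\tilde w}$.

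First I would handle the case $m\leq k$. Here $\tilde w = s_{i_k}\cdots s_{i_m}$ begins with $s_{i_k}$, so the reduced word for $\tilde w$ starts at the index $i_k$; equivalently, $\ell(s_{i_k}\tilde w) = \ell(\tilde w)-1$. The key observation is that conjugating a negative simple root subgroup $x_{-i_k}(p_k)$ by $\ol{\tilde w}^{-1}$ produces a real root subgroup for the root $-\tilde w^{-1}(\al_{i_k})$, and I need this root to be positive so that the conjugate lies in $N_+$. Since the reduced word for $\tilde w$ begins with $s_{i_k}$, we have $\tilde w^{-1}(\al_{i_k})<0$ (a standard fact: if $\ell(s_i w')<\ell(w')$ then $w'^{-1}(\al_i)$ is a negative root), hence $-\tilde w^{-1}(\al_{i_k})>0$ and $\ol{\tilde w}^{-1}x_{-i_k}(p_k)\ol{\tilde w}\in N_+$, as claimed.

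Next I would treat the case $m>k$, where $\ol{w_m}^{-1}\ol{w_{k+1}} = \ol{s_{i_{m-1}}}\cdots\ol{s_{i_{k+1}}}$ represents $w_m^{-1}w_{k+1} = s_{i_{m-1}}\cdots s_{i_{k+1}}$, again a reduced contiguous subword, now \emph{not} involving $s_{i_k}$. The conjugating element $\hat w := w_m^{-1}w_{k+1}$ is built from simple reflections all with indices strictly between $m-1$ and $k+1$, and the relevant root $\hat w(\al_{i_k})$ should come out positive; since conjugation by $\ol{\hat w}$ sends $x_{-i_k}(p_k)$ to the root subgroup for $-\hat w(\al_{i_k})$, I need $\hat w(\al_{i_k})>0$ to conclude the conjugate lies in $N_-$. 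This positivity follows because $\hat w$ is a reduced product of reflections $s_{i_j}$ with $k<j<m$, and appending $s_{i_k}$ on the right keeps the word $s_{i_{m-1}}\cdots s_{i_{k+1}}s_{i_k}$ reduced (it is still a contiguous subword of the reduced word for $w$), which is exactly the statement $\ell(\hat w s_{i_k})>\ell(\hat w)$, equivalent to $\hat w(\al_{i_k})>0$. Hence $\ol{\hat w}\,x_{-i_k}(p_k)\,\ol{\hat w}^{-1}\in N_-$.

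I expect the main obstacle to be bookkeeping the indices carefully so that the two subword expressions $w_{k+1}^{-1}w_m$ and $w_m^{-1}w_{k+1}$ are genuinely reduced and begin (respectively fail to begin) with $s_{i_k}$ as needed; once the subword-reducedness is established, the length-versus-sign dictionary $\ell(s_i w')<\ell(w')\iff w'^{-1}(\al_i)<0$ does all the work, and the fact that real root subgroups for positive (resp.\ negative) roots lie in $N_+$ (resp.\ $N_-$) finishes each case. The content is entirely that contiguous subwords of a reduced word are reduced, so no deep input beyond the Bruhat/length combinatorics recalled in \cref{subsec:KMgroups} is required.
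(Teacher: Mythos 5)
Your argument is correct and is exactly the paper's (the paper's proof is a one-line appeal to the same standard fact that $\ell(ws_i)>\ell(w)$ iff $w(\al_i)>0$; you have simply written out the cancellation of the suffixes and the reducedness of the resulting contiguous subwords explicitly). The only cosmetic point is that $s_{i_k}\cdots s_{i_m}$ is the \emph{reverse} of a contiguous subword of the fixed reduced word rather than a contiguous subword itself, which does not affect reducedness or the conclusion.
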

\begin{proof}
Follows straightforwardly from the standard fact that if $\ell(ws_i) > \ell(w)$ for some $w \in W$, then $w(\al_i)$ is again a positive root.
\end{proof}

\begin{prop}\label{prop:groupfact}
The map $y_{\mb{i}}: \BC \to N_-(w)$ given by
\[
(p_1,\dots,p_n) \mapsto y = y_1(p_1)\cdots y_n(p_n)
\]
is an isomorphism.  Its inverse is given explicitly by
\[
p_k = \De^{\om_{i_k}}_{w_k, w_{k+1}}(y).
\]
\end{prop}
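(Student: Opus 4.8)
The plan is to establish both that $y_{\mb{i}}$ is an isomorphism and that the stated formula for $p_k$ inverts it, with the bulk of the work going into verifying the minor formula. I would proceed by induction on the length $n$ of the reduced word, peeling off the first factor $y_1(p_1)$.

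\textbf{Setup and strategy.} First I would record the basic structural facts. By \cref{lem:yk}, each $y_k(p_k)$ conjugated by $\ol{w_m}^{-1}$ lands in $N_-$ when $m>k$ and in $N_+$ when $m \leq k$; in particular each $y_k(p_k) \in N_-$ (taking $m=1$, since $w_1 = w$ and $1 \leq k$ gives $N_+$—wait, I must be careful). Let me instead observe directly that $y_k(p_k) = \ol{w_{k+1}} x_{-i_k}(p_k) \ol{w_{k+1}}^{-1}$, and since $\ell(w_{k+1} s_{i_k}) > \ell(w_{k+1})$ for a reduced word, the standard fact that $w_{k+1}(-\al_{i_k})$ is a negative root shows $y_k(p_k) \in N_-$. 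To see the image lies in $N_-(w) = N_- \cap \dot w^{-1} N_+ \dot w$, I would check $\ol{w} y_k(p_k) \ol{w}^{-1} \in N_+$, which is precisely \cref{lem:yk} with $m=1 \leq k$. Since $y_{\mb{i}}$ is a product of $n$ one-parameter subgroups into the $n$-dimensional group $N_-(w)$, proving it is an isomorphism reduces to proving injectivity, for which the minor formula for $p_k$ suffices.

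\textbf{The minor computation.} The heart of the proof is showing $\De^{\om_{i_k}}_{w_k, w_{k+1}}(y) = p_k$ where $y = y_1(p_1)\cdots y_n(p_n)$. Unwinding the definition via \cref{prop:repint}, this equals $\De^{\om_{i_k}}\!\big(\ol{w_k}^{-1} y\, \ol{w_{k+1}}\big)$. The key is to split the product $y = y_{<k}\, y_k(p_k)\, y_{>k}$ where $y_{<k} = y_1(p_1)\cdots y_{k-1}(p_{k-1})$ and $y_{>k} = y_{k+1}(p_{k+1})\cdots y_n(p_n)$, and then to analyze how $\ol{w_k}^{-1}$ and $\ol{w_{k+1}}$ interact with each piece using \cref{lem:yk}. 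Concretely, I would show that conjugating $y_{<k}$ by $\ol{w_k}^{-1}$ pushes it into $N_-$ (applying the lemma with indices $j < k \leq m = k$), so its contribution to the principal minor $\De^{\om_{i_k}}$ is trivial; symmetrically, $\ol{w_{k+1}}^{-1} y_{>k} \ol{w_{k+1}}$ lands in $N_+$ (lemma with $j > k \geq m = k+1$... again adjusting indices), contributing trivially. What remains is the single factor $\ol{w_k}^{-1} y_k(p_k) \ol{w_{k+1}} = \ol{w_k}^{-1}\ol{w_{k+1}} x_{-i_k}(p_k)$. Since $w_k = s_{i_k} w_{k+1}$, one has $\ol{w_k}^{-1}\ol{w_{k+1}} = \ol{s_{i_k}}^{-1}$, so the minor becomes $\De^{\om_{i_k}}\!\big(\ol{s_{i_k}}^{-1} x_{-i_k}(p_k)\big)$, an explicit rank-one $SL_2$ computation analogous to \cref{eq:SL2} that evaluates to $p_k$.

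\textbf{Main obstacle.} I expect the delicate point to be the claim that the flanking factors contribute trivially to the principal minor — that is, that $\De^{\om_{i_k}}(n_- g\, n_+) = \De^{\om_{i_k}}(g)$ whenever the conjugated tails $n_-$ and $n_+$ lie in $N_-$ and $N_+$ respectively, together with checking that the Cartan parts genuinely cancel rather than contributing a power of $p$. This requires using that $\De^{\om_{i_k}} = \De^{\om_{i_k}}_{e,e}$ is left-$N_-$-invariant and right-$N_+$-invariant (as a highest-weight matrix coefficient, being annihilated by raising operators on the left and lowering operators on the right), which follows from the weight-space structure of $L(\om_{i_k})$ exactly as the invariance of $f_1$ was argued in the proof of \cref{prop:gendetid}. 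Once these invariances are in hand, the bookkeeping of which conjugate lands in which unipotent group via \cref{lem:yk} is routine, and injectivity of $y_{\mb{i}}$ follows since the $p_k$ are recovered as regular functions of $y$.
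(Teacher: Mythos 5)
Your computation of the minor formula is essentially the paper's proof: the same decomposition $y = y_{<k}\, y_k\, y_{>k}$, the same use of \cref{lem:yk} to place $\ol{w_k}^{-1} y_{<k}\, \ol{w_k}$ in $N_-$ and $\ol{w_{k+1}}^{-1} y_{>k}\, \ol{w_{k+1}}$ in $N_+$, and the same reduction of the middle factor to the $SL_2$ evaluation $\De^{\om_{i_k}}\bigl(\ol{s_{i_k}}^{-1} x_{-i_k}(p_k)\bigr) = p_k$. The invariance you flag as the ``main obstacle'' is immediate from the characterization of $\De^{\om_i}$ on $G_0$ in \cref{prop:repint}, and no Cartan factors arise because the conjugated tails land in the unipotent groups themselves, not merely in the Borels; the paper treats this step as following directly from the definitions. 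One sign error to fix: with the paper's convention $w_k = s_{i_n}\cdots s_{i_k}$ one has $w_k = w_{k+1} s_{i_k}$, not $s_{i_k} w_{k+1}$; your desired identity $\ol{w_k}^{-1}\ol{w_{k+1}} = \ol{s_{i_k}}^{-1}$ requires the former.

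The genuine gap is your claim that the isomorphism statement ``reduces to proving injectivity.'' An injective regular map between smooth $n$-dimensional varieties is an open immersion but need not be surjective (consider $\BC^* \hookrightarrow \BC$), so injectivity plus the dimension count does not yield an isomorphism. The paper sidesteps this by citing the isomorphism as well known (\cite[5.2]{Geiss2010}) and only proves the inversion formula. Your setup can nonetheless be completed: the formula $p_k = \De^{\om_{i_k}}_{w_k,w_{k+1}}(y)$ defines a regular map $g\colon N_-(w) \to \BC^n$ with $g \circ y_{\mb{i}} = \mathrm{id}$, so the image of $y_{\mb{i}}$ coincides with the closed locus $\{y \in N_-(w) : y_{\mb{i}}(g(y)) = y\}$; being also open and nonempty in the connected variety $N_-(w)$, it is all of $N_-(w)$. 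Alternatively, surjectivity follows by induction on $\ell(w)$ from \cref{prop:unipotentfactorization}. Either route should be stated explicitly.
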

\begin{proof}
That $y_{\mb{i}}$ is an isomorphism is well-known \cite[5.2]{Geiss2010}.  Let $y_k = y_k(p_k)$ be as in \cref{lem:yk}, and
\[	y_{<k} = y_1\cdots y_{k-1}, \quad y_{>k}=y_{k+1}\cdots y_n.	\]
In particular,
\[
y = y_{<k}\cdot y_k \cdot y_{>k}.
\]
It follows from \cref{lem:yk} that 
\[
\ol{w_k}^{-1} y_{<k} \ol{w_k} \in N_-, \quad \ol{w_{k+1}}^{-1} y_{>k}\ol{w_{k+1}} \in  N_+.
\]
But we then have
\begin{align*}
\De_{w_k,w_{k+1}}^{\om_{i_k}}(y) & = \De^{\om_{i_k}}((\ol{w_k}^{-1}y_{<k}\ol{w_k})\ol{w_k}^{-1}y_k \ol{w_{k+1}}(\ol{w_{k+1}}^{-1}y_{>k} \ol{w_{k+1}})) \\
& = \De^{\om_{i_k}}(\ol{w_k}^{-1}y_k \ol{w_{k+1}}) \\
& = \De^{\om_{i_k}}(\ol{s_{i_k}}^{-1}x_{-i_k}(p_k)) \\
& = p_k.
\end{align*}
The first two lines follow from the definitions of the generalized minors, while the last is a simple computation in $SL_2$ representation theory (similar to \cref{eq:SL2}).
\end{proof}

\subsection{Factorization in Unipotent Cells}
We can now solve the factorization problem for the unipotent cell $N^w_+ := N_+ \cap B_- \dot{w} B_-$.  Given a reduced word $w = s_{i_1}\cdots s_{i_n}$, $N^w_+$ has a birational parametrization
\[
(\BC^*)^{n} \to N_+^w, \quad (t_1,\dots,t_{n}) \mapsto x_{i_1}(t_1)\cdots x_{i_{n}}(t_{n}).
\]
The inverse map is described in \cref{prop:unifact}, which relies on the following two lemmas.

\begin{lemma} \label{lem:simplefactor}
Let $1 \leq i \leq r$.  Then any $x \in N_-$ can be written as $\ol{s_i} x' \ol{s_i}^{-1} x_{-i}(t)$ for some $x' \in N_-$ and $t \in \BC$.  Morevover, $t$ is given by
\[
t = \De_{s_i,e}^{\om_i}(x).
\]
\end{lemma}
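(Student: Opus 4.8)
The plan is to treat the existence of the factorization and the explicit formula for $t$ as two separate steps. For existence, I would recognize the asserted decomposition as an instance of the complementary unipotent factorization of \cref{prop:unipotentfactorization}, applied to $w = s_i$. Here $N_-(s_i)$ is the one-parameter subgroup $\{x_{-i}(t) : t \in \BC\}$, since $\al_i$ is the unique positive root sent to a negative root by $s_i$, while $N'_-(s_i) = N_- \cap \ol{s_i}^{-1} N_- \ol{s_i}$ is generated by the remaining negative real root subgroups together with the negative imaginary ones. The key observation is that $s_i$ permutes the positive real roots other than $\al_i$ as well as the positive imaginary roots, so that $N'_-(s_i)$ is normalized by $\ol{s_i}$; equivalently $\ol{s_i} N'_-(s_i)\ol{s_i}^{-1} = N_- \cap \ol{s_i} N_- \ol{s_i}^{-1} = N'_-(s_i)$.

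Granting this, \cref{prop:unipotentfactorization} provides a biregular isomorphism $N_-(s_i) \times N'_-(s_i) \to N_-$, and inverting its output yields the reversed factorization $N_- = N'_-(s_i)\cdot N_-(s_i)$. Thus any $x \in N_-$ is uniquely $x = z\, x_{-i}(t)$ with $z \in N'_-(s_i)$ and $t \in \BC$. Setting $x' := \ol{s_i}^{-1} z\, \ol{s_i}$, the normalization property above guarantees $x' \in N'_-(s_i) \subset N_-$ and $z = \ol{s_i} x' \ol{s_i}^{-1}$, producing the desired form $x = \ol{s_i} x' \ol{s_i}^{-1} x_{-i}(t)$.

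For the formula I would reduce to a principal minor: by \cref{prop:repint}, $\De_{s_i,e}^{\om_i}(x) = \De^{\om_i}(\ol{s_i}^{-1}x) = \langle v_{\om_i} \,|\, x'\,\ol{s_i}^{-1}x_{-i}(t)\, v_{\om_i}\rangle$. The calculation then takes place entirely inside the $\varphi_i(SL_2)$-submodule generated by $v_{\om_i}$, which is the standard two-dimensional representation because $\langle \om_i \,|\, \al_i^\vee\rangle = 1$. There $x_{-i}(t)\, v_{\om_i} = v_{\om_i} + t\, f_i v_{\om_i}$ and $\ol{s_i}^{-1}$ interchanges $v_{\om_i}$ and $f_i v_{\om_i}$ up to sign, so that $\ol{s_i}^{-1} x_{-i}(t)\, v_{\om_i} = t\, v_{\om_i} - f_i v_{\om_i}$, in the same spirit as \cref{eq:SL2}. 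Applying $x' \in N_-$ and pairing against $v_{\om_i}$, I would invoke the fact that the bilinear form of \cref{sec:KMalg} pairs only equal weight spaces and is normalized by $\langle v_{\om_i}\,|\, v_{\om_i}\rangle = \De^{\om_i}(e) = 1$: the vector $t\, v_{\om_i}$ survives with coefficient $1$, while $x'(f_i v_{\om_i})$ lies strictly below weight $\om_i$ and hence pairs to zero. This leaves $\De_{s_i,e}^{\om_i}(x) = t$.

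The step requiring the most care is existence, specifically confirming that the first factor can be written as $\ol{s_i} x' \ol{s_i}^{-1}$ with $x'$ again lying in $N_-$; this is exactly where the $\ol{s_i}$-invariance of $N'_-(s_i)$ and the reversed order of the factorization are essential, and where the infinite-dimensionality of $N_-$ must be handled through \cref{prop:unipotentfactorization} rather than a naive ordering of root subgroups. By contrast, the minor computation is a routine $SL_2$ calculation once the weight-pairing property and normalization of the invariant form are in hand.
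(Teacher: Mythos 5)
Your proof is correct and follows essentially the same route as the paper: existence via \cref{prop:unipotentfactorization} applied to $w=s_i$ (the paper simply calls this ``immediate''), and the formula for $t$ by reducing to the principal minor, stripping off the $N_-$ factor, and performing the $SL_2$ computation in the two-dimensional $\varphi_i(SL_2)$-submodule generated by $v_{\om_i}$. You merely spell out the details — the reversed factorization $N_-=N'_-(s_i)\cdot N_-(s_i)$, the $\ol{s_i}$-stability of $N'_-(s_i)$, and the weight-space orthogonality of the contravariant form — that the paper leaves implicit.
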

\begin{proof}
That $g$ admits such an expression is an immediate consequence of \cref{prop:unipotentfactorization}. To verify that $t$ is given by the stated formula, we check that
\begin{align*}
\De_{s_i,e}^{\om_i}(x) & = \De^{\om_i}(x' \ol{s_i} x_{-i}(t)) \\
& = \De^{\om_i}(\ol{s_i} x_{-i}(t)) \\
& = t.
\end{align*}
The last line is another simple $SL_2$ computation.
\end{proof}

\begin{lemma} \label{lem:t1}
Let $x = x_{i_1}(t_1) \cdots x_{i_n}(t_n) \in N_+^w$ and $x' = x_{i_2}(t_2) \cdots x_{i_n}(t_n) \in N_+^{w'}$. Here $w' = s_{i_1}w$, and $\mb{i}' = (i_2, \dots, i_n)$ is a reduced word for $w'$.  Let $p_2,\dots,p_n$ be complex numbers such that $y' = \pi_-(x') = y_{\mb{i}'}(p_2,\dots,p_n)$.
Then
\[
y= \pi_-(x) = y_{\mb{i}}(p_1,\dots,p_n),
\]
where
\[
p_1 := \De_{s_{i_1},e}^{\om_{i_1}}(x_{-i_1}([\dol{w'}y']_0^{-\al_{i_1}}t_1^{-1})[\dol{w'}y']_-^{-1}).
\]
Moreover, $t_1$ can be recovered as
\[
t_1 = [\dol{w'}y']_0^{\om_{i_1} - \al_{i_1}}[\dol{w}y]_0^{-\om_{i_1}}.
\]
\end{lemma}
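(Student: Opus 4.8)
The plan is to reduce everything to a factorization problem inside the rank-one subgroup $\varphi_i(SL_2)$, where $i := i_1$, and to solve it with \cref{lem:simplefactor}. Write $y := \pi_-(x)$ and $y' := \pi_-(x')$, and recall from \cref{cor:FZ2.9/10} the factorizations $x = b_-\dol w y$ and $x' = b'_-\dol{w'}y'$ with $b_-,b'_-\in B_-$. Because $\mb{i}'$ is obtained from $\mb{i}$ by deleting the first letter, the Weyl group elements $w_{\geq k}$ coincide for the two words when $k\geq 2$; hence the factors $y_k(p_k)$ of \cref{prop:groupfact} agree and $y' = y_2(p_2)\cdots y_n(p_n)$. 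Thus $y = y_{\mb{i}}(p_1,\dots,p_n)$ is equivalent to the single identity $y = y_1(p_1)y'$, which is what I would prove.

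First I would rewrite this identity as a rank-one factorization. From $\ol{s_i}^{-1} = \dol{s_i}$ one gets $\ol{v}^{-1} = \dol{v^{-1}}$ for all $v\in W$, and since $w_{\geq 2} = (w')^{-1}$ this gives $\ol{w_{\geq 2}} = \dol{w'}^{-1}$, so that $\dol{w'}y_1(p_1) = x_{-i}(p_1)\dol{w'}$. Using $\dol w = \dol{s_i}\dol{w'}$ and substituting the two factorizations into $x = x_i(t_1)x'$, cancellation of the common right factor $\dol{w'}y'$ shows that $y = y_1(p_1)y'$ holds as soon as we can find $b_-\in B_-$ and $p_1\in\BC$ with
\[
x_i(t_1)\,b'_- = b_-\,\dol{s_i}\,x_{-i}(p_1). \qquad (\star)
\]
Indeed, any such solution exhibits $x = b_-\dol w\bigl(y_1(p_1)y'\bigr)$ with $y_1(p_1)y'\in N_-(w)$, and uniqueness of the factorization in \cref{cor:FZ2.9/10} forces $\pi_-(x) = y_1(p_1)y'$.

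To solve $(\star)$ I would first make $b'_-$ explicit. Since $x'\in N_+$, we have $\dol{w'}y' = (b'_-)^{-1}x'\in B_-N_+ = G_0$, and its Gaussian factorization reads $(b'_-)^{-1}x' = [\dol{w'}y']_-[\dol{w'}y']_0\,x'$; in particular $[\dol{w'}y']_+ = x'$ and $(b'_-)^{-1} = [\dol{w'}y']_-[\dol{w'}y']_0$. Setting $h' := [\dol{w'}y']_0$ and conjugating the torus factor through $x_i(t_1)$, the identity $(\star)$ becomes $x_i(c)\,[\dol{w'}y']_-^{-1} = \tilde b_-\,\dol{s_i}\,x_{-i}(p_1)$ with $c := h'^{\,\al_i}t_1$ and $\tilde b_- := h'b_-\in B_-$. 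Now $z := x_{-i}(c^{-1})[\dol{w'}y']_-^{-1}\in N_-$ is exactly the argument of $\De^{\om_i}_{s_i,e}$ in the claimed formula. Applying \cref{lem:simplefactor} to $z$ yields $z = \ol{s_i}z'\ol{s_i}^{-1}x_{-i}(p_1)$ with $p_1 = \De^{\om_i}_{s_i,e}(z)$ and $z'\in N_-$; combining this with the $\varphi_i(SL_2)$ identity $x_i(c) = \bigl(x_{-i}(c^{-1})c^{\al_i^\vee}\bigr)\dol{s_i}x_{-i}(c^{-1})$ and with $\dol{s_i}\ol{s_i} = e$ reproduces the conjugated form of $(\star)$, with $\tilde b_- = x_{-i}(c^{-1})c^{\al_i^\vee}z'\in B_-$ and precisely this value of $p_1$; unwinding $b_- = h'^{-1}\tilde b_-$ then solves $(\star)$. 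This proves the first assertion of the lemma.

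The formula for $t_1$ then falls out by comparing Cartan components. From the solution above $b_- = h'^{-1}x_{-i}(c^{-1})c^{\al_i^\vee}z'$, and since $g\mapsto[g]_0$ is a homomorphism on $B_-$ we get $[b_-]_0 = h'^{-1}c^{\al_i^\vee}$. As $x = b_-\dol w y$ with $x\in N_+$, the element $\dol w y = b_-^{-1}x$ lies in $G_0$ with $[\dol w y]_0 = [b_-]_0^{-1} = h'\,c^{-\al_i^\vee}$. Evaluating the character $\om_i$, using $\langle\om_i|\al_i^\vee\rangle = 1$ and $c = h'^{\,\al_i}t_1$, gives $[\dol w y]_0^{\om_i} = h'^{\,\om_i - \al_i}t_1^{-1}$, which rearranges to $t_1 = [\dol{w'}y']_0^{\,\om_i-\al_i}[\dol w y]_0^{-\om_i}$ as claimed. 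I expect the delicate point to be the second paragraph: keeping the bar and double-bar representatives straight so that the conjugation defining $y_1(p_1)$ collapses exactly to $x_{-i}(p_1)\dol{w'}$, since the precise exponents appearing in both $p_1$ and $t_1$ depend on this collapse being exact rather than merely up to an element of $H$.
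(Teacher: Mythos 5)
Your proof is correct and follows essentially the same route as the paper's: both reduce the claim to the single identity $y = y_1(p_1)y'$, apply \cref{lem:simplefactor} to exactly the element $x_{-i_1}([\dol{w'}y']_0^{-\al_{i_1}}t_1^{-1})[\dol{w'}y']_-^{-1}$, and finish with a rank-one $\varphi_{i_1}(SL_2)$ identity, extracting $t_1$ by comparing Cartan components. The only cosmetic difference is that the paper verifies $\dol{w}\wt{y}x^{-1}\in B_-$ directly while you solve the equivalent factorization $(\star)$ and invoke uniqueness of the $B_-\dot{w}N_-(w)$ decomposition.
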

\begin{proof}
We denote $y_{\mb{i}}(p_1,\dots,p_n)$ by $\wt{y}$ during the proof.  To show $y = \wt{y}$ it suffices to show that $\dol{w}\wt{y} \in G_0$ and $[\dol{w}\wt{y}]_+ = x$, or equivalently that $\dol{w}\wt{y}x^{-1} \in B_-$.  
Now one can calculate that
\begin{align}\label{eq:wyx}
\dol{w}\wt{y}x^{-1} & = \ol{s_{i_1}}^{-1}x_{-i_1}(p_1)[\dol{w'}y']_-[\dol{w'}y']_0 x_1(-t_1).
\end{align}
Applying \cref{lem:simplefactor} to $x_{-i_1}([\dol{w'}y']_0^{-\al_{i_1}}t_1^{-1})[\dol{w'}y']_-^{-1}$, we know that
\[
x_{-i_1}([\dol{w'}y']_0^{-\al_{i_1}}t_1^{-1})[\dol{w'}y']_-^{-1} = \ol{s_{i_1}} y'' \ol{s_{i_1}}^{-1} x_{-{i_1}}(p_1)
\]
for some $y'' \in N_-$.  Combining this with \cref{eq:wyx} lets us write
\begin{align*}
\dol{w}\wt{y}x^{-1} & = (y'')^{-1}\ol{s_{i_1}}^{-1} x_{-i_1}([\dol{w'}y']_0^{-\al_{i_1}}t_1^{-1}) [\dol{w'}y']_0 x_{i_1}(-t_1) \\
& = (y'')^{-1}\ol{s_{i_1}}^{-1} [\dol{w'}y']_0 x_{-i_1}(t_1^{-1}) x_{i_1}(-t_1) \\
& = (y'')^{-1}\ol{s_{i_1}}^{-1} [\dol{w'}y']_0 \ol{s_{i_1}} t_1^{-\al^\vee_{i_1}} x_{-{i_1}}(-t_1^{-1}) \in B_-. \\
\end{align*}
The last line can be checked directly in $\varphi_{i_1}(SL_2)$.

If we take the $H$-components of each side, we see further that
\[
[\dol{w}y]_0 = \ol{s_{i_1}}^{-1} [\dol{w'}y']_0 \ol{s_{i_1}} t_1^{-\al^\vee_{i_1}}.
\]
The last assertion then follows by applying the character $\om_{i_1}$ to each side.
\end{proof}

\begin{prop}\label{prop:unifact}
Let $t_1, \dots, t_n$ be nonzero complex numbers and let $x = x_{i_1}(t_1) \cdots x_{i_n}(t_n) \in N_+^w$. Then
\[
t_k = \frac{1}{\De^{\om_{i_k}}_{w_k,e}(y)\De^{\om_{i_k}}_{w_{k+1},e}(y)}\prod_{\substack{1 \leq j \leq \wt{r} \\ j \neq i_k}}(\De^{\om_{j}}_{w_{k+1},e}(y))^{-C_{j,i_k}},
\]
where $y = \pi_-(x) \in N_-(w)$ and $w_k = s_{i_n}\cdots s_{i_k}$.
\end{prop}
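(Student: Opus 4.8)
The plan is to induct on $n=\ell(w)$, peeling off the leftmost factor $x_{i_1}(t_1)$ exactly as packaged in \cref{lem:t1}. Writing $x' = x_{i_2}(t_2)\cdots x_{i_n}(t_n)\in N_+^{w'}$ with $w' = s_{i_1}w$ and $y' = \pi_-(x')$, that lemma already records the two facts I need. First, $y$ and $y'$ share the \emph{same} parameters $p_2,\dots,p_n$ in the factorization of \cref{prop:groupfact}; since the one-parameter subgroups $y_k$ for $\mb{i}$ and $\mb{i}'$ agree for $k\geq 2$ (their conjugating elements $w_{k+1}$ coincide), this gives $y = y_1(p_1)\,y'$. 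Second, it supplies the closed form $t_1 = [\dol{w'}y']_0^{\om_{i_1}-\al_{i_1}}[\dol{w}y]_0^{-\om_{i_1}}$. The induction then splits into proving the $k=1$ case directly from this closed form, and deducing the cases $k\geq 2$ from the inductive hypothesis applied to $(x',w')$; note the $k=1$ computation itself uses no induction, so it also furnishes the base case $n=1$.

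For the $k=1$ case I would first convert the Cartan parts into principal minors. Since $\dol{s_i} = \ol{s_i}^{-1}$ by \cref{eqn:simplerootformula}, one has $\dol{w} = \ol{w^{-1}}^{-1} = \ol{w_1}^{-1}$ and likewise $\dol{w'} = \ol{w_2}^{-1}$, because $w_1 = w^{-1}$ and $w_2 = w'^{-1}$ for the given reduced words. Hence, using $\De^{\om_j}(g) = [g]_0^{\om_j}$ on $G_0$ and $\De^{\om_j}_{w,e}(g) = \De^{\om_j}(\ol{w}^{-1}g)$ from \cref{prop:repint}, I obtain $[\dol{w}y]_0^{\om_j} = \De^{\om_j}_{w_1,e}(y)$ and $[\dol{w'}y']_0^{\om_j} = \De^{\om_j}_{w_2,e}(y')$. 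Expanding the weight $\al_{i_1} = \sum_{1\leq j\leq\wt{r}} C_{j,i_1}\om_j$ via \cref{eq:extcartan}, splitting off the $j=i_1$ term (where $C_{i_1,i_1}=2$), and collecting the three Cartan factors produces
\[
t_1 = \frac{1}{\De^{\om_{i_1}}_{w_1,e}(y)\,\De^{\om_{i_1}}_{w_2,e}(y')}\prod_{\substack{1\leq j\leq\wt{r}\\ j\neq i_1}}\bigl(\De^{\om_{j}}_{w_2,e}(y')\bigr)^{-C_{j,i_1}},
\]
which becomes the asserted $k=1$ formula once the $w_2$-minors of $y'$ are identified with those of $y$ as below.

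The remaining cases $k\geq 2$, and the $y'\to y$ replacement just invoked, rely on the compatibility step, which I expect to be the crux of the argument: I claim $\De^{\om_j}_{w_m,e}(y) = \De^{\om_j}_{w_m,e}(y')$ for all $m\geq 2$. Indeed, from $y = y_1(p_1)y'$ I write $\ol{w_m}^{-1}y = (\ol{w_m}^{-1}y_1(p_1)\ol{w_m})(\ol{w_m}^{-1}y')$, and \cref{lem:yk} gives $\ol{w_m}^{-1}y_1(p_1)\ol{w_m}\in N_-$ precisely because $m>1$. The principal minor $\De^{\om_j}$ is invariant under left translation by $N_-$ (immediate from \cref{prop:repint}, since $N_-G_0 = G_0$ preserves the Cartan part, extended to all of $G$ by density), so the left factor drops out and the claim follows. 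Because $w'_m = w_m$ for $m\geq 2$ in the reduced word $\mb{i}'$, the inductive hypothesis applied to $(x',w')$ expresses each $t_k$ ($k\geq 2$) through $\De^{\om_j}_{w_k,e}(y')$ and $\De^{\om_j}_{w_{k+1},e}(y')$, and the compatibility step rewrites these in terms of $y$, giving the stated formula.

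The main obstacle is therefore not the algebra but the careful bookkeeping of the two sets of representatives $\ol{\,\cdot\,}$ and $\dol{\,\cdot\,}$, together with the left $N_-$-invariance that lets the inductive data for $y'$ transfer verbatim to $y$. It is worth remarking that, somewhat surprisingly, no appeal to the exchange identity of \cref{prop:gendetid} is needed here; the entire formula falls out of \cref{lem:t1}, the definition of $\al_{i_1}$, and the invariance argument.
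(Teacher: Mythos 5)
Your proof is correct and follows essentially the same route as the paper: the paper likewise obtains $t_k$ from \cref{lem:t1} applied to the suffix $x_{i_k}(t_k)\cdots x_{i_n}(t_n)$ and then transfers the resulting Cartan parts to $y$ itself using the factorization of $y$ into the $y_\ell(p_\ell)$, \cref{lem:yk}, and the invariance of principal minors under left $N_-$-translation. Your inductive packaging (peeling one factor at a time and transferring via $y=y_1(p_1)y'$) is just an unrolled version of the paper's direct claim that $[z_{\geq k}]_0=[\ol{w_k}^{-1}y]_0$, so the two arguments coincide in substance.
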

\begin{proof}
Let
\[
x_{\geq k} := x_{i_k}(t_k)\cdots x_{i_n}(t_n), \quad y_{\geq k} = \ol{w_k}[x_{\geq k} \ol{w_k}]_+ \ol{w_k}^{-1}, \quad z_{\geq k} = \ol{w_k}^{-1}y_{\geq k}.
\]
Then applying \cref{lem:t1} to $x_{\geq k}$ we obtain
\[
t_k = [z_{\geq (k+1)}]_0^{\om_{i_k}-\al_{i_k}}[z_{\geq k}]_0^{-\om_{i_k}}.
\]
We claim then that $[z_{\geq k}]_0 = [\ol{w_k}^{-1}y]_0$.  This follows from
\begin{align*}
\ol{w_k}^{-1}y & = (\ol{w_k}^{-1}y_{<k} \ol{w_k}) \ol{w_k}^{-1}y_{\geq k} \\
& = (\ol{w_k}^{-1}y_{<k} \ol{w_k}) z_{\geq k},
\end{align*}
and the observation that 
\[
(\ol{w_k}^{-1}y_{<k} \ol{w_k}) \in N_-
\]
which follows from \cref{lem:yk}.
But then
\begin{align*}
t_k & = [\ol{w_{k+1}}^{-1}y]_0^{(\om_{i_k}-\al_{i_k})}[\ol{w_k}^{-1}y]_0^{-\om_{i_k}} \\
& = [\ol{w_{k+1}}^{-1}y]_0^{(\om_{i_k}-\sum_{1\leq j \leq \wt{r}}C_{j,i_k}\om_j)}[\ol{w_k}^{-1}y]_0^{-\om_{i_k}} \\
& = [\ol{w_{k+1}}^{-1}y]_0^{(-\om_{i_k}-\sum_{j \neq i_k}C_{j,i_k}\om_j)}[\ol{w_k}^{-1}y]_0^{-\om_{i_k}} \\
& = \frac{1}{\De^{\om_{i_k}}_{w_k,e}(y)\De^{\om_{i_k}}_{w_{k+1},e}(y)}\prod_{\substack{1 \leq j \leq \wt{r} \\ j \neq i_k}}(\De^{\om_{j}}_{w_{k+1},e}(y))^{-C_{j,i_k}},
\end{align*}
completing the proof.
\end{proof}

\subsection{Factorization in Double Bruhat Cells}\label{sec:dbcfact}

We now turn to the factorization problem in an arbitrary double Bruhat cell $G^{u,v}$.  

Let $\mb{i}=(i_1,\dots,i_m)$ be a double reduced word for $(u,v)$.  For $1 \leq j \leq m$ and $k \in I = \{-\wt{r},\dots,-1\} \cup \{1,\dots,m\}$, we define\footnote{Recall that if $P(x_1,\dots)$ is a boolean function of some variables $\{x_1,\dots\}$, $[P(x_1,\dots)]$ denotes the integer-valued function of the $x_i$ whose value is 1 when $P$ is true and 0 when $P$ is false.}  
\begin{align*}
\Psi_{j,k} := -\ep_j \ep_k\bigl([j=k] + [j = k^+]\bigr) + \frac{C_{|i_k|,|i_j|}}{2}\biggl(\ep_j(\ep_{k^+} - \ep_k)[k^+<j] - (1+\ep_j \ep_k)[k<j<k^+] \biggr);
\end{align*}
let us explain the notation.  For an index $k \in I$, we let
\[
k^+ := \mathrm{min}\{ \ell \in I : \ell > k, |i_\ell| = |i_k| \},
\]
setting $k^+ = m+1$ if there are no such $\ell$ (recall that we set $i_k = k$ for $k<0$).  Also recall that $\ep_k$ is equal to $1$ if $i_k>0$ and $-1$ if $i_k < 0$, with $\ep_{m+1} = 1$ for purposes of the above formula.  Note that $\Psi_{j,k}$ can only take the values $0$, $\pm 1$, and $\pm C_{|i_k|,|i_j|}$.

For $k \in I$, recall the generalized minors
\[
A_{k} := A_{k,\mb{i}} = \De^{\om_{|i_k|}}_{u_{\leq k},v_{>k}}
\]
from \cref{defn:Aminors}.  We let $x \mapsto x^\io$ denote the involutive antiautomorphism of $G$ determined by
\[
a^\io = a^{-1} \text{ for } a \in H,\quad x_i(t)^\io = x_i(t) \text{ for } 1 \leq i \leq r.
\]
It is clear that $\io$ restricts to an isomorphism of $G^{u,v}$ and $G^{u^{-1},v^{-1}}$, hence in particular $\ze^{u^{-1},v^{-1}}\circ \io$ is an automorphism of $G^{u,v}$.

\begin{thm}\label{thm:main}
Let $G$ be a symmetrizable Kac-Moody group, $u,v \in W$, and $\mb{i} = (i_1,\dots,i_m)$ a double reduced word for $(u,v)$.  Then if $x = x_{\mb{i}}(t_1,\dots,t_{m+\wt{r}})$ and $x' = (\ze^{u^{-1},v^{-1}}\circ \io) (x)$, we have
\begin{align}\label{eq:t_k}
t_j & = \prod_{k \in I}A_k(x')^{\Psi_{j,k}} 
\end{align}
for $1 \leq j \leq m$, and 
\begin{align}\label{eq:aom}
t_{m+j} & = \prod_{\substack{k \in I \\ |i_k| = j}}A_k(x')^{\frac12(\ep_{k^+}-\ep_k)}. 
\end{align}
for $1 \leq j \leq r$.\footnote{Though equivalent to \cite[Theorem 1.9]{Fomin1999} in finite type, the formulation here differs slightly to better match the conventions of \cite{Berenstein2005}.  The statement in \cite{Fomin1999} does not involve $\io$, and correspondingly the $t_i$ are expressed in terms of cluster variables on the inverse double Bruhat cell $G^{u^{-1},v^{-1}}$.  Also, our definition of $\Psi_{j,k}$ differs from the corresponding definition in \cite{Fomin1999} in order to facilitate the proof of \cref{prop:XtoA}.}
\end{thm}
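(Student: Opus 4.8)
The plan is to reduce the general double Bruhat cell case to the two factorization results already established for unipotent cells, namely \cref{prop:unifact} together with the twist identities of \cref{prop:newlem}. The key observation is that the composite automorphism $\ze^{u^{-1},v^{-1}}\circ\io$ is engineered precisely so that the minors $A_k(x')$ appearing on the right-hand side become, up to the twist, the generalized minors $\De^{\om_j}_{w_{k+1},e}(y)$ that govern the unipotent factorization. First I would treat the \emph{unmixed} case, where $\mb{i}$ consists of all the negative entries (a reduced word for $u$) followed by all the positive entries (a reduced word for $v$), or vice versa. In this situation the element $x_{\mb{i}}(t_1,\dots,t_{m+\wt r})$ factors as a product lying in $N_+^{?}\cdot H\cdot N_-^{?}$, and I can apply \cref{prop:unifact} separately to the positive part (a product of $x_i(t)$'s in some $N_+^w$) and to the negative part, reading off each $t_k$ as the stated Laurent monomial in the minors $\De^{\om_j}_{w_{k+1},e}(y)$ of the corresponding unipotent projection $y=\pi_\pm(x)$.

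Next I would convert these unipotent-cell minors into the cluster variables $A_k(x')$ on $G^{u,v}$. This is where \cref{prop:newlem} enters: it expresses $\De^{\om_j}_{v_{>k},e}(y_-)$ and $\De^{\om_j}_{e,u_{<k}}(y_+)$ as ratios of minors $\De^{\om_j}_{\bullet,\bullet}(x')$ evaluated on the twisted element. Composing with the antiautomorphism $\io$ (which swaps the roles appropriately and accounts for the fact that, unlike in \cite{Fomin1999}, we want the formula stated on $G^{u,v}$ rather than $G^{u^{-1},v^{-1}}$) should identify each $\De^{\om_j}_{w_{k+1},e}(y)$ with $A_{k^+}(x')/A_\bullet(x')$ type ratios. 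Substituting these ratios into the monomial from the unmixed case and collecting exponents is the bookkeeping step that produces the exponent $\Psi_{j,k}$; here the two terms $-\ep_j\ep_k([j=k]+[j=k^+])$ encode the direct $A_k,A_{k^+}$ factors from \cref{prop:unifact}, while the $C_{|i_k|,|i_j|}$ terms encode the product over $j\neq i_k$ in that proposition, with the bracket conditions $[k^+<j]$ and $[k<j<k^+]$ recording which twisted minor each factor becomes. The Cartan-part formula \cref{eq:x'_0} of \cref{prop:twist_0} handles the torus coordinates $t_{m+j}$, yielding \cref{eq:aom}.

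Finally I would pass from the unmixed case to an arbitrary shuffle. The standard device, following \cite{Fomin1999}, is to show that the individual coordinates $t_k$ are unchanged under an elementary transposition of two adjacent entries $i_j, i_{j+1}$ of opposite sign (one positive, one negative) in $\mb{i}$, since $x_i(s)$ and $x_{-i'}(s')$ for $i\neq i'$, or the relevant commutation in $SL_2$, reorder without altering the relevant minors; one checks simultaneously that the combinatorial exponent $\Psi_{j,k}$ is invariant under the same transposition, so that both sides of \cref{eq:t_k} transform identically. Since any double reduced word is connected to an unmixed one by such transpositions, the general formula follows.

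I expect the main obstacle to be the exponent bookkeeping in the second paragraph: verifying that after substituting the \cref{prop:newlem} ratios into the \cref{prop:unifact} monomial and regrouping, the accumulated exponent of each $A_k(x')$ matches the piecewise definition of $\Psi_{j,k}$ exactly, including the delicate half-integer coefficients $\tfrac12(\ep_{k^+}-\ep_k)$ and the sign conventions forced by $\io$. A secondary subtlety is the rank-deficient case $r<\wt r$, where \cref{eq:extcartan} must be used to interpret the fundamental-weight indices $j>r$ correctly, exactly as flagged before \cref{prop:gendetid}; I would keep the product over all $1\leq j\leq\wt r$ throughout and invoke that convention when translating between coroots and the extended Cartan data.
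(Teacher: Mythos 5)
Your overall architecture matches the paper's proof: establish the formulas for an unmixed word by applying \cref{prop:unifact} to the unipotent projections of $x^\io$ and converting the resulting minors to twisted minors on $G^{u,v}$ via \cref{prop:newlem} (with \cref{prop:twist_0} handling the torus coordinates), then propagate to an arbitrary shuffle by elementary transpositions of adjacent entries of opposite sign. However, your description of the transposition step contains a genuine error that would break the argument. You assert that under such a transposition the coordinates $t_k$ are unchanged, the relevant minors are unaltered, and the exponents $\Psi_{j,k}$ are invariant, ``so that both sides of \cref{eq:t_k} transform identically.'' This is true only in the trivial case where the two transposed letters involve distinct simple roots, so that the corresponding one-parameter subgroups commute. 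The entire content of the reduction lies in the case $i_k = -i_{k+1} = i$: there the $SL_2$ relation does \emph{not} merely reorder the factors, but changes the parametrization nontrivially. One finds $t'_k = t_{k+1}(1+t_kt_{k+1})^{-1}$, $t'_{k+1}=t_k(1+t_kt_{k+1})$, and every subsequent coordinate $t'_\ell$ (including the torus coordinate $t'_{m+i}$) acquires a factor of $(1+t_kt_{k+1})$ raised to a power involving Cartan matrix entries. Simultaneously, the minor attached to position $k$ genuinely changes, from $\De^{\om_i}_{u',v'}$ to $\De^{\om_i}_{u's_i,v's_i}$ where $u'=u_{<k}$, $v'=v_{>(k+1)}$.

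Consequently, verifying that \cref{eq:t_k,eq:aom} hold for the new word is not a matter of invariance but requires the identity
\[
1 + t_k t_{k+1} \;=\; \frac{A_{k,\mb{i}}(x')\, A_{k,\mb{i}'}(x')}{A_{k^-,\mb{i}}(x')\, A_{k+1,\mb{i}}(x')},
\]
which is exactly where the generalized determinantal identity \cref{prop:gendetid} enters (this is \cref{lem:switch} in the paper). Without recognizing that the transposition induces an exchange-relation-type identity rather than leaving everything fixed, the inductive step fails at precisely the point that carries all the content; indeed this identity is the prototype of the cluster exchange relation that the rest of the paper is built on. Your treatment of the unmixed case and of the exponent bookkeeping via \cref{prop:newlem} is sound, but you need to replace the claimed invariance under transpositions with the explicit $\varphi_i(SL_2)$ computation and the application of \cref{prop:gendetid} to the factor $(1+t_kt_{k+1})$.
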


\begin{proof}
The double reduced word $\mb{i} = (i_1,\dots,i_m)$ for $(u,v)$ induces an opposite double reduced word $\mb{i}^{\op} = (j_1,\dots,j_m)$ for $(u^{-1},v^{-1})$, by setting $j_k = i_{m+1-k}$.  Let $k^{\op} := m+1-k$ and $t'_k := t_{k^{\op}}$, so that
\[
x^{\io} = t_{m+\wt{r}}^{-\al^\vee_{\wt{r}}}\cdots t_{m+1}^{-\al^\vee_1}x_{j_1}(t'_1)\cdots x_{j_m}(t'_m).
\]

We first consider the case where $\mb{i}$ is ``unmixed''; that is, $k<\ell$ whenever $\ep_k > 0$ and $\ep_\ell < 0$.   Then $x^{\io} \in G_0^{u,v}$ and $[x^\io]_0 = t_{m+\wt{r}}^{-\al^\vee_{\wt{r}}}\cdots t_{m+1}^{-\al^\vee_1}$.  By \cref{prop:twist,prop:twist_0} we have
\[
t_{m+j} = [x^\io]_0^{-\om_j} = [\ol{u}^{-1} x']_0^{\om_j} [x']_0^{-\om_j} [x'\ol{v^{-1}}]_0^{\om_j}.
\]
One can then check that this agrees with \cref{eq:aom} in this case.  

Next observe that since $\mb{i}$ is unmixed, $y_- := \pi_-(x^\io)$ is equal to $\pi_-([x^\io]_+)$, and
\[
[x^\io]_+ = x_{j_{\ell(v)^\op}}(t'_{\ell(v)^\op})\cdots x_{j_m}(t'_m) \in N_+^{v^{-1}}.
\]
For $1\leq k \leq \ell(v)$, we can use \cref{prop:unifact} to obtain
\[
t_k = t'_{k^\op} = \frac{1}{\De^{\om_{i_k}}_{(v^{-1})_{>(k+1)^\op},e}(y_-)\De^{\om_{i_k}}_{(v^{-1})_{>k^\op},e}(y_-)}\biggl( \prod_{\substack{1 \leq j \leq \wt{r} \\ j \neq i_k}}(\De^{\om_{j}}_{(v^{-1})_{>k^\op},e}(y_-))^{-C_{j,|i_k|}} \biggr).
\]
Applying \cref{prop:newlem} to each term and using the observation that $(v^{-1})_{\leq k^\op} = v_{\geq k}$, we can rewrite this as
\[
t_k = \frac{1}{\De^{\om_{i_k}}_{e,v_{\geq (k+1)}}(x')\De^{\om_{i_k}}_{e,v_{\geq k}}(x')}\biggl( \prod_{\substack{1 \leq j \leq \wt{r} \\ j \neq i_k}} \De^{\om_j}_{e,v_{\geq k}}(x')^{-C_{j,|i_k|}} \biggr) \biggl( \prod_{1\leq j \leq \wt{r}}\De^{\om_j}_{e,v^{-1}}(x')^{C_{j,|i_k|}} \biggr)
\]
Using the fact that $\mb{i}$ is unmixed, one checks that this is equivalent to 
\[
t_k = A_k(x')^{-1}A_{k^-}(x')^{-1} \biggl( \prod_{\substack{ \ell \in I \\ \ell < k < \ell^+}}A_\ell(x')^{-C_{|i_\ell|,|i_k|}} \biggr) \biggl( \prod_{1 \leq j \leq \wt{r}}A_{-j}(x')^{C_{j,|i_k|}} \biggr).
\]
Here $k^- \in I$ is defined by $(k^-)^+ = k$.  Again, the reader may check that this expression agrees with \cref{eq:t_k} in this case.

For $\ell(v) <k \leq m$, we note that $\pi_+(x^\io) = \pi_+([x^\io]_-)$ and if $a = t_{m+1}^{\al^\vee_1}\cdots t_{m+\wt{r}}^{\al^\vee_{\wt{r}}}$,
\[
[x^\io]_- = x_{j_1}(a^{\al_{|j_1|}}t'_1)\cdots x_{j_{\ell(u)}}(a^{\al_{|j_{\ell(u)}|}}t'_{\ell(u)}).
\]
From here \cref{eq:t_k} follows by a similar argument as above, again invoking \cref{prop:unifact,prop:newlem}.  One arrives at
\begin{align*}
t_k & = A_k(x')^{-1}A_{k^-}(x')^{-1} \biggl( \prod_{\substack{ \ell \in I \\ \ell < k < \ell^+}}A_\ell(x')^{-C_{|i_\ell|,|i_k|}} \biggr) \biggl( \prod_{\ell: \ell^+ >m}A_\ell(x')^{C_{|i_\ell|,|i_k|}} \biggr) \times \\ 
& \quad \quad \times \biggl( \prod_{\ell \in I} A_\ell(x')^{-\frac12 C_{|i_\ell|,|i_k|}(\ep_{\ell^+} - \ep_\ell} \biggr),
\end{align*}
which agrees with \cref{eq:t_k} given that $\mb{i}$ is unmixed.  

Now suppose two double reduced words $\mb{i}$ and $\mb{i}'$ differ only by the exchange of two consecutive positive and negative indices.  That is, for some $1 \leq k < m$ and $1 \leq i,j \leq r$ we have
\[
i_k = i_{k+1}' = j,\quad i_{k+1} = i_k' = -i.
\]
We claim that if the theorem holds for $\mb{i}$ it also holds for $\mb{i'}$. Specifically, suppose that
\[
x = x_{\mb{i}}(t_1,\dots,t_{m+\wt{r}}) = x_{\mb{i'}}(t'_1,\dots,t'_{m+\wt{r}}),
\]
and that the $t_\ell$ satisfy \cref{eq:t_k,eq:aom}.  Then we claim the $t'_\ell$ also satisfy \cref{eq:t_k,eq:aom} with respect to the $A_{\ell,\mb{i}'}$.  

This is trivial unless $i = j$.  In that case, a straightforward computation in $\varphi_i(SL_2)$ yields that
\begin{gather*}
t'_{m+i} = t_{m+i}(1+t_k t_{k+1}), \quad t'_{m+\ell} = t_{m+\ell} \text{ for } \ell \neq i, \\
t'_\ell = t_\ell \text{ for } \ell < k, \quad t'_\ell = t_\ell(1+t_k t_{k+1})^{\ep_\ell C_{|i,i_\ell|}}, \text{ for } k+1 < \ell \leq m, \\
t'_k = t_{k+1}(1+t_k t_{k+1})^{-1}, \quad t'_{k+1} = t_k(1+t_k t_{k+1}).
\end{gather*}
Using the expression for $(1+t_k t_{k+1})$ provided by \cref{lem:switch} and simplifying the result, one can then check directly that \cref{eq:t_k,eq:aom} hold for the $t'_\ell$. But then since the image of $x_{\mb{i}}$ intersects the image of $x_{\mb{i'}}$ along a dense subset, we conclude that \cref{eq:t_k,eq:aom} hold for all points in the image of $x_{\mb{i'}}$.
\end{proof}

\begin{lemma} \label{lem:switch}
Suppose \cref{thm:main} holds for a double reduced word $\mb{i}$ with $i_k = -i_{k+1} = i$ for some $1 \leq i \leq r$.  Let $\mb{i}'$ be the double reduced word obtained by exchanging $i_k$ and $i_{k+1}$.  Then for $x = x_{\mb{i}}(t_1,\dots,t_{m+\wt{r}})$ and $x' = \ze^{u^{-1},v^{-1}} \circ \io$ we have
\[
1 + t_k t_{k+1} = \frac{A_{k,\mb{i}}(x') A_{k,\mb{i}'}(x')}{A_{k^-,\mb{i}}(x') A_{k+1,\mb{i}}(x')}.
\]
\end{lemma}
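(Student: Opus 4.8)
The goal is to express the quantity $1 + t_k t_{k+1}$ in terms of the $A$-minors, where $\mb{i}$ and $\mb{i}'$ differ only by swapping the adjacent indices $i_k = i$ and $i_{k+1} = -i$. The plan is to exploit two facts already in hand: first, that \cref{thm:main} is assumed to hold for $\mb{i}$, so the $t_\ell$ are given as explicit Laurent monomials in the $A_{\ell,\mb{i}}(x')$; and second, that the two double reduced words describe the same group element $x$, with $A_{\ell,\mb{i}}$ and $A_{\ell,\mb{i}'}$ differing only at the index $k$ (since the Weyl group elements $u_{\leq \ell}$ and $v_{>\ell}$ entering \cref{defn:Aminors} are unchanged except at position $k$, where the swap interchanges the factors of $s_i$ between the $u$-part and the $v$-part).

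First I would identify the precise minors involved. Using \cref{defn:Aminors}, since $i_k = i$ is a positive index and $i_{k+1} = -i$ is negative, the minor $A_{k,\mb{i}} = \De^{\om_i}_{u_{\leq k}, v_{>k}}$ uses a particular pair of Weyl group elements, while after the swap $A_{k,\mb{i}'} = \De^{\om_i}_{u'_{\leq k}, v'_{>k}}$ uses the complementary pair obtained by moving the reflection $s_i$ from one side to the other. The key observation is that the four minors $A_{k,\mb{i}}, A_{k,\mb{i}'}, A_{k^-,\mb{i}}, A_{k+1,\mb{i}}$ are exactly the four corners appearing in the generalized determinantal identity of \cref{prop:gendetid}, once we evaluate at the appropriate group element and account for the index shift $k^- \mapsto k \mapsto k^+$. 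So the strategy is to reduce the claimed formula to an instance of \cref{prop:gendetid}.

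Concretely, I would take the identity of \cref{prop:gendetid} in the form
\[
\De^{\om_i}_{a,b}\,\De^{\om_i}_{as_i,bs_i} = \De^{\om_i}_{as_i,b}\,\De^{\om_i}_{a,bs_i} + \prod_{j \neq i}(\De^{\om_j}_{a,b})^{-C_{ji}}
\]
with $a = u_{\leq k^-}$ and $b = v_{> k}$ (the Weyl elements flanking position $k$ for the word $\mb{i}$), so that the left-hand factors become $A_{k^-,\mb{i}}$ and $A_{k,\mb{i}'}$ and the right-hand factors become $A_{k,\mb{i}}$ and $A_{k+1,\mb{i}}$ up to the bookkeeping of the $\pm$ reflections. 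Dividing through by $A_{k^-,\mb{i}}(x')\,A_{k+1,\mb{i}}(x')$ rewrites the identity as
\[
\frac{A_{k,\mb{i}'}(x')\,A_{k,\mb{i}}(x')}{A_{k^-,\mb{i}}(x')\,A_{k+1,\mb{i}}(x')} = \frac{\De^{\om_i}_{as_i,b}(x')\,A_{k,\mb{i}}(x')}{A_{k^-,\mb{i}}(x')\,A_{k+1,\mb{i}}(x')} + \frac{\prod_{j\neq i}(\cdots)^{-C_{ji}}}{A_{k^-,\mb{i}}(x')\,A_{k+1,\mb{i}}(x')},
\]
and it then remains to recognize the first summand on the right as $1$ and the second as $t_k t_{k+1}$, using the monomial formulas \cref{eq:t_k} supplied by \cref{thm:main} for $\mb{i}$. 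I expect the main obstacle to be precisely this last matching: one must verify that the exponent vector of $t_k t_{k+1}$, assembled from the $\Psi_{j,k}$ and $\Psi_{j,k+1}$ data, cancels against the $A$-monomial coming from the ratio $\prod_{j\neq i}(\De^{\om_j}_{u_{\leq k^-},v_{>k}})^{-C_{ji}} / (A_{k^-}A_{k+1})$, and that the cross-terms telescope so that the remaining factor is genuinely the constant $1$. This is a careful but mechanical check in the combinatorics of the $k^\pm$ indexing and the definition of $\Psi_{j,k}$; the conceptual content is entirely carried by \cref{prop:gendetid}.
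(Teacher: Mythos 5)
Your proposal follows essentially the same route as the paper: the paper likewise proves the lemma by using \cref{eq:t_k} to write $t_kt_{k+1}$ as the monomial $A_{k+1,\mb{i}}(x')^{-1}A_{k^-,\mb{i}}(x')^{-1}\prod_{\ell<k<\ell^+}A_{\ell,\mb{i}}(x')^{-C_{|i_\ell|,i}}$ and then recognizing $1+t_kt_{k+1}$ as an instance of \cref{prop:gendetid} divided by $A_{k^-,\mb{i}}(x')A_{k+1,\mb{i}}(x')$. The only corrections to your bookkeeping: the identity should be applied with $a=u_{<k}$ and $b=v_{>(k+1)}$ (not $a=u_{\leq k^-}$, which can shift the minors $\De^{\om_j}_{a,b}$ for $j\neq i$), so that $A_{k,\mb{i}}A_{k,\mb{i}'}=\De^{\om_i}_{a,b}\De^{\om_i}_{as_i,bs_i}$ sits on the left of the identity while $A_{k+1,\mb{i}}A_{k^-,\mb{i}}=\De^{\om_i}_{as_i,b}\De^{\om_i}_{a,bs_i}$ is the first term on the right, making your first summand exactly $1$ rather than the expression you wrote.
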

\begin{proof}
Letting $u' = u_{<k}, v' = v_{>(k+1)}$, we first calculate that
\begin{gather*}
A_{k,\mb{i}} = \De_{u',v'}^{\om_i}, \quad A_{k,\mb{i}'} = \De_{u's_i,v's_i}^{\om_i}, \\
A_{k+1,\mb{i}} = \De_{u's_i,v'}^{\om_i}, \quad A_{k^-,\mb{i}} = \De_{u',v's_i}^{\om_i}.
\end{gather*} 

Using \cref{eq:t_k} and the fact that $\ep_k = -\ep_{k+1} = 1$, we also have
\begin{align*}
1 + t_k t_{k+1} & = 1 + A_{k+1,\mb{i}}(x')^{-1}A_{k^-,\mb{i}}(x')^{-1}\biggl( \prod_{\ell < k < \ell^+}A_{\ell,\mb{i}}(x')^{-C_{|i_\ell|,i}} \biggr) \\
& =\frac{\De_{u's_i,v'}^{\om_i}(x')\De_{u',v's_i}^{\om_i}(x') + \prod_{\substack{1 \leq j \leq \wt{r} \\ j \neq i}}\De_{u',v'}^{\om_j}(x')^{-C_{ji}}}{\De_{u's_i,v'}^{\om_i}(x')\De_{u',v's_i}^{\om_i}(x')}.
\end{align*}
But then by \cref{prop:gendetid} this yields
\[
1 + t_kt_{k+1} = \frac{\De_{u',v'}^{\om_i}(x')\De_{u's_i,v's_i}^{\om_i}(x')}{\De_{u's_i,v'}^{\om_i}(x')\De_{u',v's_i}^{\om_i}(x')},
\]
and the lemma follows.
\end{proof}

\subsection{$\CX$-coordinates and Generalized Minors}\label{sec:XandA}
Recall that the coweight parametrization $x_{\mb{i}}: \CX_{\mb{i}} \to G_{\Ad}^{u,v}$ of \cref{defn:Xdef} yields a set $\{X_i\}_{i \in I}$ of rational coordinates on $G_{\Ad}^{u,v}$. Since the image of $T_{\mb{i}}$ in $G^{u,v}$ is a finite cover of $\CX_i$ in $G^{u,v}$, the pullbacks of the $X_i$ to $G^{u,v}$ are Laurent monomials in the $t_i$, and, by \cref{thm:main}, in the twisted generalized minors.  In this section we derive explicit formulas for this, rewriting the generalized Chamber Ansatz of \cite{Fomin1999} in terms of the $X_i$.  We will see that the resulting formula recovers the exchange matrix defined in \cite{Berenstein2005}.  

\begin{prop}\label{prop:XtoA}
Fix a double reduced word $\mb{i}$ for $(u,v)$, let $\{X_i\}_{i \in I}$ be the corresponding rational coordinates on $G^{u,v}_{\Ad}$, and let $\{A_i\}_{i \in I}$ be the corresponding generalized minors on $G^{u,v}$.  Then if $p_G:G \to G_{\Ad}$ is the composition of the automorphism $\io \circ \ze^{u,v}$ of $G^{u,v}$ with the quotient map $G \to G_{\Ad}$, we have
\[
p_G^*(X_j) = \prod_{k \in I}A_k^{\wt{B}_{j,k}}.
\]
Here $\wt{B} = B+M$, where $B$ and $M$ are the $I \times I$ matrices given by\footnote{We keep the notation introduced at the beginning of \cref{sec:dbcfact}.}
\begin{align*}
B_{jk} & = \frac{C_{|i_k|,|i_j|}}{2}\biggl(\ep_j[j=k^+] - \ep_k[j^+=k] + \ep_j[k<j<k^+][j>0] - \ep_{j^+}[k<j^+<k^+][j^+\leq m]  \\
& \quad - \ep_k[j<k<j^+][k>0] + \ep_{k^+}[j<k^+<j^+][k^+\leq m] \biggr)
\end{align*}
and
\begin{align*}
M_{jk} &  = \frac12 C_{|i_k|,|i_j|}\biggl( [j^+,k^+>m] + [j,k <0] \biggr).
\end{align*}

\end{prop}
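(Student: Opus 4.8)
The plan is to realize $p_G^\ast X_j$ as the composite of two Laurent-monomial transformations: the change of coordinates between the coweight parametrization and the one-parameter parametrization of $G^{u,v}$, followed by the generalized Chamber Ansatz of \cref{thm:main}. For the first of these I would make the monomial relation between the $X_i$ and the $t_i$ explicit. Writing a point as $x_{\mb i}(t_1,\dots,t_{m+\wt r})$ in the sense of \cref{defn:Tmap} and projecting to $G^{u,v}_{\Ad}$ via $\pi_G$, I would bring the coweight parametrization of \cref{defn:Xdef} into the standard form $x_{i_1}(\cdot)\cdots x_{i_m}(\cdot)\cdot(\text{Cartan})$ by moving every torus factor to the right, using repeatedly the identity $a\,E_{i_j}=x_{i_j}(a^{\ep_j\al_{|i_j|}})\,a$ for $a\in H_{\Ad}$. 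With the accumulated torus elements $a_k:=X^{\om^\vee_{\wt r}}_{-\wt r}\cdots X^{\om^\vee_1}_{-1}X^{\om^\vee_{|i_1|}}_1\cdots X^{\om^\vee_{|i_k|}}_k$ this yields $t_j=a_{j-1}^{\ep_j\al_{|i_j|}}$ for $1\le j\le m$ and identifies $a_m$ with the image of $\prod_b t_{m+b}^{\al^\vee_b}$. Since $\langle\al_{|i_j|}\,|\,\om^\vee_{|i_k|}\rangle=\de_{|i_j|,|i_k|}$, the first relation collapses to $t_j=\prod_{k<j,\,|i_k|=|i_j|}X_k^{\ep_j}$. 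Inverting this chain by chain over the indices of each fixed absolute value expresses every $X_j$ as an explicit monomial $\prod_i t_i^{N_{ji}}$ in the $t_i$: the interior coordinates are $X_k=t_{k^+}^{\ep_{k^+}}t_k^{-\ep_k}$, while a coordinate at the top of its chain ($k^+>m$) involves the Cartan factor $S^{(|i_k|)}:=\prod_b t_{m+b}^{C_{b,|i_k|}}$ in place of $t_{k^+}$, as one reads off by matching Cartan parts and using the relation $\al^\vee_i=\sum_j C_{ij}\om^\vee_j$.

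The conceptual heart of the argument is that the twist appearing in \cref{thm:main} is inverse to the one defining $p_G$. Writing $h:=\io\circ\ze^{u,v}$, so that $p_G=\pi_G\circ h$ and $p_G^\ast=h^\ast\circ\pi_G^\ast$, I read \cref{thm:main} as the identity of rational functions $t_i=\prod_{k\in I}\bigl(A_k\circ\ze^{u^{-1},v^{-1}}\circ\io\bigr)^{\Phi_{ik}}$ on $G^{u,v}$, where $\Phi_{ik}=\Psi_{ik}$ for $i\le m$ and $\Phi_{m+b,k}=\tfrac12(\ep_{k^+}-\ep_k)[\,|i_k|=b\,]$ records the Cartan rows \cref{eq:aom}. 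Because $\io^2=\mathrm{id}$ and $\ze^{u^{-1},v^{-1}}\circ\ze^{u,v}=\mathrm{id}$ on $G^{u,v}$ by \cref{prop:twist}, we get $A_k\circ\ze^{u^{-1},v^{-1}}\circ\io\circ h=A_k$, hence $h^\ast t_i=\prod_k A_k^{\Phi_{ik}}$ with the $A_k$ now the honest minors on $G^{u,v}$. Substituting this into the monomials of the previous paragraph yields $p_G^\ast X_j=\prod_k A_k^{\sum_i N_{ji}\Phi_{ik}}$, so that $\wt B=N\Phi$.

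It then remains to verify the purely combinatorial identity $N\Phi=B+M$. For an index $j$ with $j^+\le m$ the matrix product collapses to $\wt B_{jk}=\ep_{j^+}\Psi_{j^+,k}-\ep_j\Psi_{jk}[\,j>0\,]$, while for $j^+>m$ the first term is replaced by the contribution of $h^\ast S^{(|i_j|)}$, giving $\wt B_{jk}=\tfrac12(\ep_{k^+}-\ep_k)C_{|i_k|,|i_j|}-\ep_j\Psi_{jk}[\,j>0\,]$. I would then insert the definition of $\Psi$ and simplify the resulting indicator expressions: away from the chain tops this should reproduce $B_{jk}$ with vanishing $M_{jk}$, whereas the correction $M_{jk}=\tfrac12 C_{|i_k|,|i_j|}([\,j^+,k^+>m\,]+[\,j,k<0\,])$ appears precisely when the Cartan factor couples two top-of-chain (respectively two coweight) indices. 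I expect this final identification to be the main obstacle, being a delicate bookkeeping of the $k^+/k^-$ chain structure, the boundary conventions $\ep_{m+1}=1$ and $k^+=m+1$, and the half-integer coefficients; that it closes up cleanly is essentially by design, as $\Psi$ was defined in \cref{thm:main} with exactly this computation in view (cf. the footnote there).
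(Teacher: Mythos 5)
Your proposal follows essentially the same route as the paper's proof: factor $p_G^*(X_j)$ as the composite of the monomial change of variables between the coweight and one-parameter parametrizations (your $t_j=\prod_{k<j,\,|i_k|=|i_j|}X_k^{\ep_j}$ is exactly the paper's auxiliary $t'_k$, and your inversion including the Cartan factor $S^{(|i_k|)}$ matches the paper's matrices $D$ and $E$), then apply \cref{thm:main} via the observation that $\io\circ\ze^{u,v}$ is inverse to $\ze^{u^{-1},v^{-1}}\circ\io$, and your intermediate expression $\wt B_{jk}=[j^+\le m]\ep_{j^+}\Psi_{j^+,k}-[j>0]\ep_j\Psi_{jk}+\tfrac{C_{|i_k|,|i_j|}}{2}[j^+>m](\ep_{k^+}-\ep_k)$ is precisely the paper's \cref{eq:Linitial}. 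The only part you defer, the boolean simplification of this expression down to $B+M$, is carried out in the paper by two explicit indicator identities and is indeed just careful bookkeeping.
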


\begin{proof}
Recall from \cref{prop:fincov} that the image of $T_{\mb{i}}$ in $G^{u,v}$ is a finite cover of $\CX_{\mb{i}}$ in $G^{u,v}_{\Ad}$ under the quotient map.  Thus it follows from \cref{thm:main} that there exists some integer matrix $N$ such that
\[
p_G^*(X_j) = \prod_{k \in I}A_k^{N_{jk}}.
\]

To compute $N$, define new variables $t'_1,\dots,t'_{m+\wt{r}}$ by
\[
t'_k =  \prod_{\substack{j < k \\ |i_j| = |i_k|}}X_j^{\ep_k}.
\]
Here if $k>m$ we set $|i_k| = k-m$ and $\ep_k = +1$.  The $t'_k$ are uniquely determined by the requirement that
\[
X^{\om_{\wt{r}}^\vee}_{-\wt{r}}\cdots X^{\om_{1}^\vee}_{-1} E_{i_1} X^{\om_{|i_1|}^\vee}_1 \cdots E_{i_m} X^{\om_{|i_m|}^\vee}_m = x_{i_1}(t'_1) \cdots x_{i_m}(t'_m)\prod_{k=1}^{\wt{r}}(t'_{m+k})^{\om_k^\vee}.
\]

Moreover, inverting this change of variables one finds that
\begin{align}\label{eq:Xtot'}
X_j = \prod_{1 \leq k \leq m+\wt{r}}(t'_k)^{D_{jk}},
\end{align}
where $D$ is the integer matrix with rows labelled by $I$, columns labelled by $1,\dots,m+\wt{r}$, and
\[
D_{jk} = ([j^+ = k] - [j = k])\ep_k.
\]

We now compare the $t'_k$ with the coordinates $t_k$ on $G^{u,v}$ induced from
\[
x_{\mb{i}}: (t_1,\dots,t_{m+\wt{r}}) \mapsto x_{i_1}(t_1)\cdots x_{i_m}(t_m)\prod_{k=1}^{\wt{r}}(t_{m+k})^{\al_k^\vee}.
\]
If $\pi_G:G^{u,v} \to G_{\Ad}^{u,v}$ is the quotient map, then we can check that
\begin{align}\label{eq:t'tot}
\pi_G^*t'_j = \prod_{k=1}^{m+\wt{r}} t_k^{E_{jk}},
\end{align}
where $E$ is the $(m+\wt{r}) \times (m+\wt{r})$ matrix given by
\[
E_{jk} = \de_{jk}[j\leq m] + C_{|i_k|,|i_j|}[j,k > m].
\]

By \cref{thm:main} we have
\begin{align}\label{eq:ttoA}
(\io \circ \ze^{u,v})^*t_j = \prod_{k \in I}A_k^{F_{j,k}},
\end{align}
where $F_{j,k}$ is the integer matrix with rows labelled by $1,\dots,m+\wt{r}$, columns labelled by $I$, and
\[
F_{jk} = [j \leq m]\Psi_{j,k} + \frac12 [j >m][|i_j| = |i_k|]( \ep_{k^+} - \ep_k ).
\]
Here $\Psi_{j,k}$ is as in \cref{sec:dbcfact}, and if $k_+ >m$ for some $k \in I$, we set $\ep_{k^+} = +1$.  

We can now compute $N$ by multiplying the matrices $D$, $E$, and $F$, and simplifying the resulting conditional expression.  Before doing any serious simplification, a straightforward initial calculation yields
\begin{align}\label{eq:Linitial}
N_{jk} = [j^+ \leq m]\ep_{j^+}\Psi_{j^+,k} - [j > 0]\ep_j\Psi_{j,k} + \frac{C_{|i_k|,|i_j|}}{2}[j^+ > m](\ep_{k^+} - \ep_k).
\end{align}
Unwinding the definition of $\Psi$ we see that
\begin{align*}
\ep_j\Psi_{j,k} & = \frac{C_{|i_k|,|i_j|}}{2}\biggl( -\ep_k[j = k] - \ep_k[j = k^+] - (\ep_{j} + \ep_k)[k < j < k^+] + (\ep_{k^+} - \ep_k)[k^+ < j] \biggr).
\end{align*}
Plugging this and the corresponding expression for $\ep_{j^+}\Psi_{j^+,k}$ into \cref{eq:Linitial}, we obtain
\begin{equation}\label{eq:Lsecond}
\begin{split}
N_{jk} & = \frac{C_{|i_k|,|i_j|}}{2}\biggl( \ep_k[j=k]\bigl([j > 0] - [j^+ \leq m]\bigr) - \ep_k[j^+ = k] + \ep_k[j = k^+] \\
& \quad + (\ep_j + \ep_k)[k < j <k^+][j>0] - (\ep_{j^+} + \ep_k)[k<j^+<k^+][j^+ \leq m] \\
& \quad + (\ep_{k^+} - \ep_k)\bigl( [k^+<j^+][j^+ \leq m] - [k^+<j][j>0] + [j^+>m]\bigr) \biggr).
\end{split}
\end{equation}
The reader may verify that for any $j,k \in I$,
\begin{multline*}
[k^+<j^+][j^+ \leq m] - [k^+<j][j>0] + [j^+>m] \\
= [j<k^+<j^+][k^+\leq m] + [j=k^+] + [j^+,k^+>m].
\end{multline*}
This identity lets us rewrite \cref{eq:Lsecond} as
\begin{equation}\label{eq:Lthird}
\begin{split}
N_{jk} & = \frac{C_{|i_k|,|i_j|}}{2}\biggl( [j=k]\bigl( [j<0] + [j^+>m]\bigr) + \ep_j[j = k^+] - \ep_k[j^+=k] \\
& \quad + (1-\ep_k)[j^+,k^+>m][j \neq k] + \ep_j[k<j<k^+][j>0]  \\
& \quad - \ep_{j^+}[k<j^+<k^+][j^+\leq ] + \ep_{k^+}[j<k^+<j^+][k^+\leq m] \\
& \quad + \ep_k\bigl([k<j<k^+][j>0] - [k<j^+<k^+][j^+\leq m] \\
& \quad - [j<k^+<j^+][k^+\leq m]\bigr) \biggr).
\end{split}
\end{equation}
By another boolean computation the reader may check that
\begin{multline*}
[k<j<k^+][j>0] - [k<j^+<k^+][j^+\leq m] - [j<k^+<j^+][k^+\leq m] \\
 = -[j<k<j^+][k>0] + [j\neq k]\bigl([j^+,k^+>m] - [j,k<0]\bigr)
\end{multline*}
for any $j,k \in I$.
But now we can use this to rewrite \cref{eq:Lthird} as
\begin{align*}
N_{jk} & = \frac{C_{|i_k|,|i_j|}}{2}\biggl( [j^+,k^+>m] + [j,k<0] + \ep_j[j = k^+] - \ep_k[j^+=k]  + \ep_j[k<j<k^+][j>0] \\
& \quad  - \ep_{j^+}[k<j^+<k^+][j^+\leq ] - \ep_k[j<k<j^+][k>0] + \ep_{k^+}[j<k^+<j^+][k^+\leq m] \biggr) \\
& = \wt{B}_{j,k},
\end{align*}
completing the proof.
\end{proof}

\section{Cluster Algebras and Double Bruhat Cells}\label{sec:clusterdbc}
Corresponding to a double reduced word for $(u,v)$ we associated in \cref{sec:words} a collection of generalized minors.  In \cite{Fomin1999} it was discovered that as the double reduced word is varied, these collections vary by certain subtraction-free relations, which served as prototypes for the cluster algebra exchange relations introduced in \cite{Fomin2001}.  In \cite{Berenstein2005} it was shown that the generalized minors are organized into an upper cluster algebra structure on the coordinate ring of a double Bruhat cell in a semisimple algebraic group; in this section we extend this result to the double Bruhat cells of any symmetrizable Kac-Moody group.  

In fact, the cluster algebra associated with a double Bruhat cell is encoded by an exchange matrix we have already seen, when we computed the inverse of the coweight parametrization in \cref{sec:XandA}.  This is an instance of a general phenomenon, that one can define $\CX$-coordinates from cluster variables via the monomial transformation defined by the exchange matrix.  In the present situation, however, this is reversed: we start with independently defined cluster variables and $\CX$-coordinates, and derive this monomial transformation directly from the Chamber Ansatz.  We summarize our main results in \cref{thm:ensthm}, which relates the Chamber Ansatz and the simply-connected and adjoint forms of the double Bruhat cell as components of a nondegenerate cluster ensemble in the sense of \cite{Fock2003}.

\subsection{Cluster Algebras and $\CX$-coordinates}\label{sec:clusters}
Cluster algebras are commutative rings equip\-ped with a collection of distinguished generating sets related by an iterative process of mutation.  The same combinatorial data giving rise to the dynamics of mutation encodes a second algebraic structure, variously called $\tau$-coordinates \cite{Gekhtman2002}, coefficients or $Y$-variables \cite{Fomin2006}, and $\CX$-coordinates \cite{Fock2003}.  The cluster variables and $\CX$-coordinates are related by a canonical monomial transformation, which we refer to as the cluster ensemble map following \cite{Fock2003}.  We briefly recall the details we will need below; for in-depth discussions the reader may consult \cite{Fock2003,Fomin2006}.  We more or less follow \cite{Fock2003}, though we adapt our notation to be consistent with \cite{Berenstein2005} where possible.  

Cluster algebras and $\CX$-coordinates are defined by seeds.  A seed $\Si = (I,I_0,B,d)$ consists of the following data: 
\begin{enumerate}
\item An index set $I$ with a subset $I_0 \subset I$ of ``frozen'' indices. 
\item A rational $I \times I$ \emph{exchange matrix} $B$.  It should have the property that $b_{ij} \in \BZ$ unless both $i$ and $j$ are frozen.  
\item A set $d = \{d_i \}_{i \in I}$ of positive integers that skew-symmetrize $B$; that is, $b_{ij}d_j = -b_{ji}d_i$ for all $i,j \in I$.\footnote{The convention here and in \cite{Fock2006} is to denote by $d_i$ what is labelled $d_i^{-1}$ in \cite{Fock2003}.  The specific choice of $d_i$ only serves to specify a particular scaling of the Poisson bracket on the $\CX$-coordinates, hence many versions of this definition only ask that $B$ be skew-symmetrizable without taking the choice of $d_i$ as part of the data.}
\end{enumerate}

To construct a cluster algebra from a seed one considers the collection of all seeds obtained from an initial seed by mutation.  Let $k \in I \setminus I_0$ be an unfrozen index of a seed $\Si$.  We say another seed $\Si' = \mu_k(\Si)$ is obtained from $\Si$ by mutation at $k$ if we identify the index sets in such a way that the frozen variables and $d_i$ are preserved, and the exchange matrix $B'$ of $\Si'$ satisfies
\begin{align}\label{eq:matmut}
b'_{ij} = \begin{cases}
-b_{ij} & i = k \text{ or } j=k \\
b_{ij} & b_{ik}b_{kj} \leq 0 \\
b_{ij} + |b_{ik}|b_{kj} & b_{ik}b_{kj} > 0.
\end{cases}
\end{align}
Two seeds $\Si$ and $\Si'$ are said to be mutation equivalent if they are related by a finite sequence of mutations.

To a seed $\Si$ we associate a collection of \emph{cluster variables} $\{A_i\}_{i \in I}$ and a split algebraic torus $\CA_\Si := \Spec \BZ[A^{\pm 1}_I]$, where $\BZ[A^{\pm 1}_I]$ denotes the ring of Laurent polynomials in the cluster variables.  If $\Si'$ is obtained from $\Si$ by mutation at $k \in I \setminus I_0$, there is a birational \emph{cluster transformation} $\mu_k: \CA_\Si \to \CA_{\Si'}$.  This is defined by the \emph{exchange relation}
\begin{align}\label{eq:Atrans}
\mu_k^*(A'_i) = \begin{cases}
A_i & i \neq k \\
A_k^{-1}\biggl(\prod_{b_{kj}>0}A_j^{b_{kj}} + \prod_{b_{kj}<0}A_j^{-b_{kj}}\biggr) & i = k.
\end{cases}
\end{align}
These transformations provide gluing data between any tori $\CA_\Si$ and $\CA_{\Si'}$ of mutation equivalent seeds $\Si$ and $\Si'$.  The $\CA$-space $\CA_{|\Si|}$ is defined as the scheme obtained from gluing together all such tori of seeds mutation equivalent with an initial seed $\Si$.  

\begin{defn}
Let $\Si$ be a seed.  The cluster algebra $\CA(\Si)$ is the $\BZ$-subalgebra of the function field of $\CA_{|\Si|}$ generated by the collection of all cluster variables of seeds mutation equivalent to $\Si$.  The upper cluster algebra $\ol{\CA}(\Si)$ is
\[
\ol{\CA}(\Si) := \BZ[\CA_{|\Si|}] = \bigcap_{\Si' \sim \Si} \BZ[\CA_{\Si'}] \subset \BQ(\CA_{|\Si|}),
\]
or the intersection of all Laurent polynomial rings in the cluster variables of seeds mutation equivalent to $\Si$.  
\end{defn}

This definition is equivalent to that of \cite{Berenstein2005}, though the details appear somewhat different.  In particular, the exchange relations among cluster variables only involve the submatrix formed by the unfrozen rows of $B$, and in \cite{Berenstein2005} the term exchange matrix refers to (the transpose of) this submatrix.  Furthermore, it is implicit in our formulation that we will only consider cluster algebras of geometric type.

A key property of cluster algebras is the Laurent phenomenon, summarized in the following proposition.

\begin{prop}
\emph{(\cite[3.1]{Fomin2001})} For any seed $\Si$ the cluster algebra $\CA(\Si)$ is contained in the upper cluster algebra $\ol{\CA}(\Si)$.  In other words, the cluster variables of any seed are Laurent polynomials in the cluster variables of any seed mutation equivalent to it.
\end{prop}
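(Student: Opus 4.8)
The plan is to reprove the Laurent phenomenon along the lines of \cite{Fomin2001}, adapted to the geometric-type setting above. Since each mutation $\mu_k$ is an involution, mutation equivalence is symmetric, so it suffices to fix one seed $\Si$ with cluster $\{A_i\}_{i \in I}$ and to prove that every cluster variable occurring at any mutation-equivalent seed lies in the Laurent polynomial ring $L := \BZ[A_i^{\pm 1} : i \in I]$. I would encode the collection of seeds on the $n$-regular tree $\BT_n$, where $n = |I \setminus I_0|$, with each vertex carrying a seed, each edge labelled by the unfrozen index mutated across it, and a root vertex $t_0$ carrying $\Si$. Note that the exchange relation \eqref{eq:Atrans} is mutated only at unfrozen indices, for which the exponents $b_{kj}$ are genuine integers, so every exchange polynomial is an honest element of $L$. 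The target statement is then: for every vertex $t$ and every cluster variable $A^{(t)}_k$ attached to $t$, one has $A^{(t)}_k \in L$.

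The obvious induction on the distance from $t_0$ does not close directly: the exchange relation \eqref{eq:Atrans} expresses a freshly mutated variable as a Laurent polynomial divided by the variable being exchanged, and a quotient of two elements of $L$ need not lie in $L$. The device that repairs this is the Caterpillar Lemma, which organizes the induction over a spine of consecutive edges $t \overset{i}{-} t' \overset{j}{-} t'' \overset{i}{-} t'''$ and simultaneously tracks Laurentness in the clusters attached to the adjacent spine vertices. Its conclusion --- that the far variable is again Laurent --- rests on a divisibility statement: when the exchange relation writes the product $A^{(t''')}_i \cdot A^{(t'')}_i$ as a binomial $P \in L$, the factor $A^{(t'')}_i$ genuinely divides $P$ in $L$, so that the quotient $A^{(t''')}_i = P / A^{(t'')}_i$ stays in $L$.

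The main obstacle, and the technical heart of the argument, is verifying the coprimality hypotheses that make this divisibility hold. Here one uses that $L$ is a unique factorization domain (being a localization of the polynomial ring $\BZ[A_i]$) together with the binomial shape of the exchange polynomials: each right-hand side $\prod_{b_{kj}>0}A_j^{b_{kj}} + \prod_{b_{kj}<0}A_j^{-b_{kj}}$ in \eqref{eq:Atrans} is a sum of two coprime monomials, hence squarefree and coprime to every cluster variable not appearing in it. One then checks that the exchange polynomials attached to the two $i$-labelled edges of the spine remain coprime in $L$ after passing through the intervening $j$-mutation; this is where the matrix mutation rule \eqref{eq:matmut} enters, controlling which variables can occur in each binomial and hence guaranteeing the divisibility above. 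Granting these coprimality checks, the Caterpillar Lemma applies at every step, the induction on distance from $t_0$ closes, and we conclude $\CA(\Si) \subseteq \ol{\CA}(\Si)$; that is, every cluster variable is a Laurent polynomial in the cluster of any mutation-equivalent seed.
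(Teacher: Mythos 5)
The paper does not prove this proposition; it is imported with the citation \cite[3.1]{Fomin2001}, and nothing later in the article depends on the internals of its proof, so there is no in-paper argument to compare yours against. What you have written is an outline of the standard Fomin--Zelevinsky proof (the Caterpillar Lemma form of it, from their companion paper on the Laurent phenomenon; the proof of the cited Theorem 3.1 itself runs instead through the mutation-invariance of the intersection $\BZ[\CA_\Si]\cap\bigcap_k\BZ[\CA_{\mu_k(\Si)}]$, i.e.\ essentially the mechanism this paper later quotes as \cref{prop:uppercluster}). As an outline it is accurate: the reduction to a single reference seed, the remark that mutation happens only at unfrozen $k$ so the exchange binomials in \cref{eq:Atrans} have integer exponents and lie in $L$, the failure of the naive induction because of the division by $A_k$, and the repair via a divisibility statement in the UFD $L$ are the right ingredients in the right order.

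Two caveats. First, everything you defer with ``one then checks'' is the actual mathematical content. The Caterpillar Lemma has three hypotheses --- the exchange polynomial does not involve the variable being exchanged, the two monomials of each binomial are coprime, and a compatibility between the exchange binomials on the two $i$-labelled edges of the spine expressed through the intermediate $j$-mutation --- and verifying the last of these for cluster exchange relations requires explicitly computing how the binomial $\prod_{b_{kj}>0}A_j^{b_{kj}} + \prod_{b_{kj}<0}A_j^{-b_{kj}}$ transforms under \cref{eq:matmut}; your proposal asserts rather than performs this, and it is the step where a wrong sign convention or a missed case would sink the argument. Second, the parenthetical claim that a sum of two coprime monomials is ``squarefree and coprime to every cluster variable not appearing in it'' is true but needs its own small argument (a monomial change of variables reduces it to $1+z^a$), and squarefreeness is not quite the property that gets used --- what is used is coprimality of the relevant pair of exchange binomials after mutation. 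So: correct strategy, and the one everyone uses, but not yet a proof.
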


A generic seed is mutation equivalent to infinitely many other seeds.  However, the following proposition guarantees that in favorable circumstances an upper cluster algebra is already determined by a finite number of them.  

\begin{prop}\label{prop:uppercluster}
\emph{(\cite[1.9]{Berenstein2005})} Let $\Si$ be a seed such that the submatrix of $B$ formed by its unfrozen rows has full rank.  Then 
\[
\ol{\CA}(\Si) = \BZ[\CA_\Si] \cap \bigcap_{k \in I \setminus I_0} \BZ[\CA_{\mu_k(\Si)}].
\]
In other words, the upper cluster algebra $\ol{\CA}(\Si)$ only depends on $\Si$ and the seeds obtained from it by a single mutation.
\end{prop}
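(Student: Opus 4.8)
The plan is to reformulate the asserted right-hand side as an \emph{upper bound} attached to the single seed $\Si$ and to prove that, under the full-rank hypothesis, this upper bound is invariant under mutation; this is the strategy of \cite{Berenstein2005}. Write
\[
\CU(\Si) := \BZ[\CA_\Si] \cap \bigcap_{k \in I \setminus I_0}\BZ[\CA_{\mu_k(\Si)}]
\]
for the right-hand side. One inclusion is immediate and needs no hypothesis: $\ol{\CA}(\Si) = \bigcap_{\Si' \sim \Si}\BZ[\CA_{\Si'}]$ is an intersection over a family of seeds that contains $\Si$ and all of its single mutations $\mu_k(\Si)$, so $\ol{\CA}(\Si) \subseteq \CU(\Si)$. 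For the reverse inclusion it suffices to show that $\CU(\Si) = \CU(\mu_k(\Si))$ for every unfrozen $k$: granting this, an induction along an arbitrary mutation path from $\Si$ to $\Si'$ gives $\CU(\Si) = \CU(\Si') \subseteq \BZ[\CA_{\Si'}]$, and intersecting over all $\Si' \sim \Si$ yields $\CU(\Si) \subseteq \ol{\CA}(\Si)$.

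Before the inductive step I would record two points. First, matrix mutation preserves the rank of the unfrozen-row submatrix of $B$: since $\mu_k(B)$ is obtained from $B$ by multiplication by unimodular integer matrices acting on rows and columns, the full-rank hypothesis propagates to every seed in the mutation class, so the identity $\CU(\Si) = \CU(\mu_k(\Si))$ is meaningful with the hypothesis in force at both seeds. Second, I would isolate a purely local computation, valid without any hypothesis: for a single unfrozen $k$,
\[
\BZ[\CA_\Si] \cap \BZ[\CA_{\mu_k(\Si)}] = \BZ[A_i^{\pm 1} : i \neq k]\,[A_k, A_k'],
\]
where $A_k'$ is the cluster variable of $\mu_k(\Si)$ in direction $k$, satisfying $A_k A_k' = \prod_{b_{kj}>0}A_j^{b_{kj}} + \prod_{b_{kj}<0}A_j^{-b_{kj}}$ as in \cref{eq:Atrans}. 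Indeed, writing an element of $\BZ[\CA_\Si]$ as a Laurent polynomial in $A_k$ with coefficients Laurent in the remaining variables, membership in $\BZ[\CA_{\mu_k(\Si)}]$ amounts exactly to divisibility of the negative-degree coefficients by the corresponding powers of the exchange binomial, which is precisely the condition that the element be polynomial in both $A_k$ and $A_k'$. Intersecting over $k$ then gives the hypothesis-free description
\[
\CU(\Si) = \bigcap_{k \in I \setminus I_0} \BZ[A_i^{\pm 1} : i \neq k]\,[A_k, A_k'].
\]

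The heart of the argument is to compare this description for $\Si$ with the analogous one for $\mu_k(\Si)$. The condition indexed by $k$ itself matches automatically: mutation is an involution, so the pair $\{A_k, A_k'\}$ attached to $\mu_k(\Si)$ in direction $k$ is the same unordered pair, and the remaining variables are unchanged. The conditions indexed by $j \neq k$ are where the full-rank hypothesis becomes essential. Here the two descriptions differ, because $\mu_k$ replaces $A_k$ by $A_k'$ among the Laurent variables and alters the exchange binomial in direction $j$; to show that an element satisfying all the $\Si$-conditions also satisfies the $j$-th $\mu_k(\Si)$-condition, one must transfer a divisibility-by-exchange-binomial statement across the mutation. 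This is exactly where one invokes \emph{coprimality} — that the exchange binomials of the seed are pairwise coprime in the Laurent ring — which I would derive from the full-rank hypothesis via the observation that distinct columns of the exchange matrix are non-proportional, so the corresponding binomials share no nonconstant factor.

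I expect this last transfer to be the main obstacle: the bookkeeping is delicate because mutating at $k$ changes the exchange binomials at the neighboring indices $j$, and one must control how both the denominators of a Laurent polynomial and its divisibility by these binomials are affected. Coprimality is what tames the interaction, guaranteeing that no spurious common factors arise and that the divisibility conditions defining the per-index rings match up across a single mutation, so that the symmetric description of $\CU$ is genuinely mutation-invariant. Once $\CU(\Si) = \CU(\mu_k(\Si))$ is established for each unfrozen $k$, the reduction of the first paragraph completes the proof.
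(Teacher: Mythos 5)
The paper itself offers no proof of this proposition: it is quoted directly from \cite{Berenstein2005}, so the only argument to compare yours against is the one in that reference, and your outline is a faithful reconstruction of it. The reduction is the right one: set $\CU(\Si) := \BZ[\CA_\Si] \cap \bigcap_{k}\BZ[\CA_{\mu_k(\Si)}]$, note that $\ol{\CA}(\Si) \subseteq \CU(\Si)$ is free, and reduce the converse to the single-mutation invariance $\CU(\Si)=\CU(\mu_k(\Si))$. Your local description $\BZ[\CA_\Si]\cap\BZ[\CA_{\mu_k(\Si)}]=\BZ[A_i^{\pm1}:i\neq k][A_k,A'_k]$ and its justification are correct, as are the propagation of the full-rank hypothesis along the mutation class and the derivation of pairwise coprimality of the exchange binomials from linear independence (hence pairwise non-proportionality) of the relevant exponent vectors. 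One small conventional slip: in this paper $B$ is the transpose of the matrix used in \cite{Berenstein2005}, so the binomial in direction $k$ is built from the $k$-th unfrozen \emph{row} of $B$ (the entries $b_{kj}$ appearing in \cref{eq:Atrans}), and it is the rows, not the columns, whose non-proportionality you need; the hypothesis of the proposition is phrased accordingly.

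The genuine gap is the one you flagged yourself: the identity $\CU(\Si)=\CU(\mu_k(\Si))$ under coprimality is asserted but not proved, and it is the entire mathematical content of the cited result. Concretely, for $y\in\CU(\Si)$ and an unfrozen $j\neq k$, one must show that the negative-degree coefficients of $y$ as a Laurent polynomial in $A'_j$ (computed in the seed $\mu_k(\Si)$) are divisible by the appropriate powers of the \emph{new} exchange binomial $P'_j$, which differs from $P_j$ both because $A_k$ has been replaced by $A'_k$ and because the row $b_{j\bullet}$ has mutated. This requires an explicit comparison of $P'_j$ with $P_j$ and $A'_k$, together with a separation of the two divisibility constraints at $j$ and at $k$, and coprimality is used at a specific point of that computation rather than as a blanket principle. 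Saying that coprimality "tames the interaction" names the tool without performing the verification, so as written the argument is an accurate roadmap of the proof in \cite{Berenstein2005} with its central step left open.
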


Given a seed $\Si$ we also associate a second algebraic torus $\CX_\Si := \Spec \BZ[X_I^{\pm 1}]$, where $\BZ[X_I^{\pm 1}]$ again denotes the Laurent polynomial ring in the variables $\{X_i\}_{i \in I}$.  If $\Si'$ is obtained from $\Si$ by mutation at $k \in I \setminus I_0$, we again have a birational map $\mu_k: \CX_\Si \to \CX_{\Si'}$.  It is defined by
\begin{align}\label{eq:Xtrans}
\mu_k^*(X'_i) = \begin{cases}
X_iX_k^{[b_{ik}]_+}(1+X_k)^{-b_{ik}} & i \neq k  \\
X_k^{-1} & i = k,
\end{cases}
\end{align}
where $[b_{ik}]_+:=\mathrm{max}(0,b_{ik})$.  The $\CX$-space $\CX_{|\Si|}$ is defined as the scheme obtained from gluing together all such tori of seeds mutation equivalent with an initial seed $\Si$.   The transformation rules \cref{eq:Xtrans} were also discovered in \cite{Gekhtman2002}, and coincide with the transformation rules of coefficients \cite{Fomin2006}.  

Since $B$ is skew-symmetrizable, there is a canonical Poisson structure on each $\CX_\Si$ given by
\[
\{X_i,X_j\} = b_{ij}d_jX_iX_j.
\]
The cluster transformations of \cref{eq:Xtrans} intertwine the Poisson brackets on $\CX_{\Si}$ and $\CX_{\Si'}$, hence these assemble into a Poisson structure on $\CX_{|\Si|}$.  

Although in general the $\CA$- and $\CX$-spaces associated with a seed are defined over $\BZ$, we will only consider the associated complex schemes in the remainder of the paper.  In fact, since the expressions in \cref{eq:Atrans,eq:Xtrans} are subtraction-free, one can consider the associated $\BP$-points of these spaces for any semifield $\BP$.  This leads in particular to the notion of the positive real part of these spaces, but this will not play a direct role in the present work.

The exchange matrix encodes not only the structure of cluster transformations, but also a certain transformation between the two types of coordinates.   Concrete instances of this include the projection from decorated Teichm\"{u}ller space to Teichm\"{u}ller space \cite{Fock2005} and the transformation of $T$-system solutions into solutions of the corresponding $Y$-system \cite{Kuniba2011}.  It was first defined abstractly in the study of compatible Poisson structures on a cluster algebra \cite{Gekhtman2002}, and in \cite{Fomin2006} played a key role in the derivation of universal formulas for cluster variables in terms of $F$-polynomials.  Following the terminology of \cite{Fock2003} we refer to it as the cluster ensemble map.  More precisely, we will need a slight generalization described in the following proposition, motivated by the formula found in \cref{prop:XtoA}.  

\begin{prop}\label{prop:modifiedensmap}
Let $M$ be an $I \times I$ matrix such that $M_{ij} = 0$ unless both $i$ and $j$ are frozen.  Let $\Si$ be any seed such that $\wt{B}=B+M$ is an integer matrix, and let $p_M: \CA_\Si \to \CX_\Si$ be the regular map defined by
\[
p_M^*(X_i) = \prod_{j \in I}A_j^{\wt{B}_{ij}}.
\]
Then $p_M$ extends to a regular map $p_M: \CA_{|\Si|} \to \CX_{|\Si|}$.\footnote{A special case of this is proved in \cite[Lemma 1.3]{Gekhtman2002}.}
\end{prop}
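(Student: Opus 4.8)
The plan is to verify that the map $p_M$ commutes with the gluing maps, i.e.\ that for each mutation $\mu_k$ with $k \in I \setminus I_0$, the square relating $p_M$ on $\CA_\Si$ and $\CA_{\Si'}$ to $p_M$ on $\CX_\Si$ and $\CX_{\Si'}$ commutes. Since the tori $\CA_\Si$ and $\CX_\Si$ glue along exactly these single mutations, establishing commutativity for a single mutation at an arbitrary unfrozen $k$ suffices to extend $p_M$ from the initial chart to the whole of $\CA_{|\Si|}$. Concretely, writing $\Si' = \mu_k(\Si)$ with exchange matrix $B'$ given by \cref{eq:matmut} and $\wt{B}' := B' + M$ (note $M$ is unchanged under mutation since it is supported on frozen indices, which mutation fixes), I must check the identity
\[
\mu_k^* \bigl( p_M^*(X'_i) \bigr) = p_M^* \bigl( \mu_k^*(X'_i) \bigr)
\]
for every $i \in I$, where the left side uses \cref{eq:Xtrans} to express $X'_i$ and then the $\CA$-mutation \cref{eq:Atrans}, and the right side first applies $p_M$ and then the $\CA$-mutation.

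First I would dispose of the case $i = k$: here $\mu_k^*(X'_k) = X_k^{-1}$, so $p_M^* \mu_k^*(X'_k) = \prod_j A_j^{-\wt{B}_{kj}}$, while the other composite gives $\prod_j (A'_j)^{\wt{B}'_{kj}}$ with $A'_j = A_j$ for $j \neq k$ and $\wt{B}'_{kj} = -\wt{B}_{kj}$; since the $k$-th cluster variable is the only one that changes and $\wt{B}'_{kk} = -\wt{B}_{kk}$, the exponents match and the $A'_k$ contribution drops out by the sign flip. The substantive case is $i \neq k$, where $\mu_k^*(X'_i) = X_i X_k^{[b_{ik}]_+}(1+X_k)^{-b_{ik}}$. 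Applying $p_M^*$ turns $X_i, X_k$ into the monomials $\prod_j A_j^{\wt{B}_{ij}}, \prod_j A_j^{\wt{B}_{kj}}$, and the factor $(1 + p_M^*(X_k))$ becomes $1 + \prod_j A_j^{\wt{B}_{kj}}$. Because $\wt{B}_{kj} = B_{kj}$ for the unfrozen row $k$ (the $M$-correction vanishes on row $k$ as $k \nin I_0$), this is exactly $1 + \prod_j A_j^{B_{kj}}$, which after pulling out $A_k^{-B_{kk}} = A_k^{0}$ and splitting the monomial by sign of $b_{kj}$ becomes $A_k^{\text{something}}$ times the binomial $\bigl(\prod_{b_{kj}>0} A_j^{b_{kj}} + \prod_{b_{kj}<0} A_j^{-b_{kj}}\bigr)$ appearing in the exchange relation \cref{eq:Atrans}. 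The task is then to confirm that, after substituting the $\CA$-mutation formula for the single variable $A'_k$ on the other side and collecting exponents, the two sides agree as Laurent monomials times this shared binomial.

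The main obstacle will be the bookkeeping of exponents of $A_k$ on both sides. On the $p_M^* \circ \mu_k^*$ side the binomial contributes a factor $\prod_j A_j^{-\max(0,-b_{kj})\,(\text{one term})}$ plus reindexing, and all the $A_j$ for $j \neq k$ must be matched against $\wt{B}'_{ij}$; the crux is that the exponent of $A_k$ produced by rewriting $1 + \prod A_j^{b_{kj}}$ as $A_k^{?}\cdot(\text{binomial})$ combines with $[b_{ik}]_+$ and $-b_{ik}$ to reproduce precisely the mutation rule \cref{eq:matmut} for $\wt{B}'_{ij}$, and this is where the three cases of \cref{eq:matmut} ($b_{ik}b_{kj} \leq 0$ versus $>0$) arise from the sign decomposition $-b_{ik} = -[b_{ik}]_+ - [-b_{ik}]_+$ interacting with the two monomials in the binomial. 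The calculation is the standard compatibility of the ensemble map with mutation, essentially \cite[Lemma 1.3]{Gekhtman2002}; the only new feature is carrying the frozen-supported correction matrix $M$, and one checks it is inert precisely because $M_{ij}$ vanishes whenever $i$ or $j$ is unfrozen, so it never interacts with the mutated index $k$ and simply rides along unchanged in every exponent comparison.
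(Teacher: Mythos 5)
Your proposal is correct and follows essentially the same route as the paper: verify that $p_M$ commutes with each single mutation $\mu_k$, observe that the frozen-supported correction is inert (the monomial $\prod_{j\in I_0}A_j^{M_{ij}}$ is untouched by mutation at an unfrozen $k$, and $p_M^*(X_k)=p_0^*(X_k)$), and reduce the remaining exponent bookkeeping to the standard $M=0$ compatibility, which the paper simply cites from \cite{Fock2003}. One cosmetic correction: in the $i=k$ case the $A'_k$ factor disappears because $\wt{B}_{kk}=\wt{B}'_{kk}=0$ (skew-symmetrizability of $B$ together with $M_{kk}=0$ for unfrozen $k$), not because of the sign flip.
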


\begin{proof}
First observe that if $\Si'$ is any seed mutation equivalent to $\Si$, its exchange matrix $B'$ again has the property that $B' + M$ has integer entries.  This follows from the fact that the mutation rules \cref{eq:matmut} can only change the exchange matrix entries by integer values.  In particular, the formula in the statement of the proposition yields a regular map $p'_M: \CA_{\Si'} \to \CX_{\Si'}$ when we replace $B$ by $B'$.  

To check that these descend to a map $\CA_{|\Si|} \to \CX_{|\Si|}$, we must verify that they commute with the cluster transformations.  That is, if $\Si'$ is obtained from $\Si$ by mutation at $k$, we want to show that that there is a commutative diagram


\vspace{-2mm}
 \[
\begin{tikzcd}
\CA_{\Si} \arrow[dashed]{r}{\mu_k} \arrow{d}{p_M} & \CA_{\Si'} \arrow{d}{p'_M} \\
\CX_{\Si} \arrow[dashed]{r}{\mu_k} & \CX_{\Si'} \\
\end{tikzcd}
\]
Note that for the special case $M = 0$ this is the content of \cite[Proposition 2.2]{Fock2003}, and that in general $p_M^*(X_i) = p_0^*(X_i)\prod_{j \in I_0}A_j^{M_{ij}}$.  If $i \neq k$, we have
\begin{align*}
(\mu_k \circ p_M)^*(X'_i) & = \mu_k^*\bigl( p_0(X'_i)\prod_{j \in I_0}(A'_j)^{M_{ij}} \bigr) \\
& = (\mu_k \circ p_0)^*(X'_i)\prod_{j \in I_0}A_j^{M_{ij}}
\end{align*}
and
\begin{align*}
(p_M \circ \mu_k)^*(X'_i) & = p_M^*\bigl( X_iX_k^{[b_{ik}]_+}(1+X_k)^{-b_{ik}}\bigr) \\
& = (p_0 \circ \mu_k)^*(X'_i)\prod_{j \in I_0}A_j^{M_{ij}},
\end{align*}
and the equality of these follows from their equality in the $M = 0$ case.  On the other hand, since $p_0^*(X_k) = p_M^*(X_k)$, it follows trivially that $(\mu_k \circ p_M)^*(X'_k) = (p_M \circ \mu_k)^*(X'_k)$, and the proposition follows.
\end{proof}

\subsection{Seeds Associated with Double Reduced Words}\label{sec:wordseed}

Before reinterpreting the results of \cref{sec:dbc} in terms of cluster algebras, let us explain how to associate a seed $\Si_{\mb{i}}$ with any double reduced word $\mb{i}$ for $(u,v)$.  This allows us to state the main result, \cref{thm:ensthm}, which incorporates the generalized minors and twist map into a modified cluster ensemble in the sense of \cref{prop:modifiedensmap}.  

\begin{defn}\label{def:wordseed}
Let $\mb{i}$ be a double reduced word for $(u,v)$, and let $m = \ell(u) + \ell(v)$.  We define a seed $\Si_{\mb{i}}$ as follows.  The index set is $I = \{-\wt{r},\dots,-1\} \cup \{1,\dots,m\}$, and an index $k \in I$ is frozen if either $k<0$ or $k^+ > m$.  To each index $k>0$ is associated a weight $1 \leq |i_k| \leq \wt{r}$, which we extend to $k < 0$ by setting $|i_k| = |k|$.  The exchange matrix $B:=B_{\mb{i}}$ is defined by
\begin{align*}
b_{jk} & = \frac{C_{|i_k|,|i_j|}}{2}\biggl(\ep_j[j = k^+] - \ep_k[j^+=k]  \\
& \quad + \ep_j[k<j<k^+][j>0] - \ep_{j^+}[k<j^+<k^+][j^+\leq m] \\
& \quad - \ep_k[j<k<j^+][k>0] + \ep_{k^+}[j<k^+<j^+][k^+\leq m] \biggr).
\end{align*}
We let $d_k = d_{|i_k|}$, where the right-hand side refers to the symmetrizing factors of the Cartan matrix.  One easily checks that the skew-symmetrizability of $B$ follows from the symmetrizability of the Cartan matrix. 
\end{defn}

Note that the exchange matrix defined in \cite{Berenstein2005} is equal to the transpose of the matrix formed by the unfrozen rows of $B$.  Our main results are summarized in the following theorem.

\begin{thm}\label{thm:ensthm}
Let $G$ be a symmetrizable Kac-Moody group, $u,v \in W$ elements of its Weyl group, and $\mb{i}$ a double reduced word for $(u,v)$.  Consider the seed $\Si_{\mb{i}}$ defined in \cref{def:wordseed} and let $\CA_{|\Si_{\mb{i}}|}$, $\CX_{|\Si_{\mb{i}}|}$ be the associated complex $\CA$\-- and $\CX$-spaces.  Let $M$ be the $I \times I$ matrix with entries
\[
M_{jk} = \frac12 C_{|i_k|,|i_j|}\biggl( [j^+,k^+>m] + [j,k <0] \biggr),
\]
and let $p_G: G^{u,v} \to G_{\Ad}^{u,v}$ be the composition of the automorphism $\io \circ \ze^{u,v}$ of $G^{u,v}$ from \cref{thm:main} and the quotient map from $G$ to $G_{\Ad}$.  
\begin{enumerate}
\item
There is a regular map $a_{|\Si_{\mb{i}}|}: \CA_{|\Si_{\mb{i}}|} \to G^{u,v}$ which identifies the generalized minors of \cref{defn:Aminors} with the corresponding cluster variables on $\CA_{\Si_{\mb{i}}}$. It induces an isomorphism of $\BC[G^{u,v}]$ and the upper cluster algebra $\BC[\CA_{|\Si_{\mb{i}}|}]$.  
\item 
There is a regular map $x_{|\Si_{\mb{i}}|}: \CX_{|\Si_{\mb{i}}|} \to G_{\Ad}^{u,v}$ which extends the map $\CX_{\Si_{\mb{i}}} \to G_{\Ad}^{u,v}$ of \cref{defn:Xdef}.  It is Poisson with respect to the standard Poisson-Lie structure on $G_{\Ad}$ and the Poisson structure on $\CX_{|\Si_{\mb{i}}|}$ defined by the exchange matrix $B$.
\item
The matrix $\wt{B} = B + M$ has integer entries, hence there is an associated regular map $p_M:\CA_{|\Si_{\mb{i}}|} \to \CX_{|\Si_{\mb{i}}|}$.  These maps together form a commutative diagram:


\vspace{-2mm}
\[
\begin{tikzcd}
\CA_{|\Si_{\mb{i}}|} \arrow{r}{a_{|\Si_{\mb{i}}|}} \arrow{d}{p_M} & G^{u,v} \arrow{d}{p_G} \\
\CX_{|\Si_{\mb{i}}|} \arrow{r}{x_{|\Si_{\mb{i}}|}} & G_{\Ad}^{u,v}.
\end{tikzcd}
\]
\end{enumerate} 
\end{thm}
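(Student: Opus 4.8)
The plan is to dispatch the three assertions in order of increasing independent content, noting that part (3) is largely a repackaging of results already in hand. The matrix $M$ is supported on pairs of frozen indices, since $M_{jk}$ vanishes unless $j^+,k^+>m$ or $j,k<0$, each of which forces both $j$ and $k$ to be frozen; hence \cref{prop:modifiedensmap} applies as soon as $\wt{B}=B+M$ is integral. But $\wt{B}$ is exactly the integer matrix $N=DEF$ produced in \cref{prop:XtoA}, so integrality is automatic and $p_M$ exists. The commutativity of the square then reduces, by regularity and the density of the initial torus, to the initial chart, where $a^*(A_k)=A_k$, $x^*(X_j)=X_j$, and $p_M^*(X_j)=\prod_{k}A_k^{\wt{B}_{jk}}$; the required identity $(p_G\circ a)^*(X_j)=(x\circ p_M)^*(X_j)$ is then precisely the content of \cref{prop:XtoA}. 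Thus part (3) follows once parts (1) and (2) supply the maps $a$ and $x$.

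For part (1) I would first use \cref{thm:main} to exhibit the minors $A_k=A_{k,\mb{i}}$ as coordinates on an open torus in $G^{u,v}$: precomposing with the automorphism $\ze^{u^{-1},v^{-1}}\circ\io$, \cref{thm:main} writes each $t_j$ as a Laurent monomial in the $A_k$ with exponent matrix $F$, and checking that the square matrix $F$ is nondegenerate over $\BQ$ shows that $(A_k)_{k\in I}$ is, up to this monomial change of variables, the inverse of the open immersion $x_{\mb{i}}$, identifying an open subset of $G^{u,v}$ with $\CA_{\Si_{\mb{i}}}$. Next I would verify the exchange relations: mutation of $\Si_{\mb{i}}$ at an unfrozen index is realized on minors by the determinantal identity of \cref{prop:gendetid} together with the switching computation of \cref{lem:switch}, showing that the mutated cluster variable is again a generalized minor, hence lies in $\BC[G^{u,v}]$, and that the one-step mutation tori also map to $G^{u,v}$. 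Finally I would pass to the upper cluster algebra. The combinatorial input is that the submatrix of $B$ on its unfrozen rows has full rank, which I would establish from the explicit form of $B_{\mb{i}}$ in \cref{def:wordseed}, so that \cref{prop:uppercluster} identifies $\ol{\CA}(\Si_{\mb{i}})$ with the intersection of $\BC[\CA_{\Si_{\mb{i}}}]$ with the $\BC[\CA_{\mu_k\Si_{\mb{i}}}]$ over single mutations. To identify this intersection with $\BC[G^{u,v}]$ I would use that $G^{u,v}$ is smooth, hence normal, by \cref{prop:fdimdbc}: the frozen minors are nonvanishing on $G^{u,v}$, so the initial chart omits only the divisors cut out by the unfrozen minors, and along the generic point of each such divisor the corresponding mutated minor is nonzero (its exchange relation being a sum of nonvanishing monomials), so the initial and one-step charts cover $G^{u,v}$ away from a set of codimension at least two. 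The primeness and coprimality of minors from \cref{prop:prime} ensures these divisors are reduced and distinct, and normality then forces regular functions on this cover to extend, giving $\BC[G^{u,v}]\cong\ol{\CA}(\Si_{\mb{i}})$.

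For part (2) the open immersion of \cref{defn:Xdef}, shown to be an open immersion in \cref{prop:fincov}, supplies $x$ on the initial chart $\CX_{\Si_{\mb{i}}}$. I would extend it over $\CX_{|\Si_{\mb{i}}|}$ by showing that under a change of double reduced word the coweight coordinates transform by the $\CX$-mutation rule \cref{eq:Xtrans}; this is the $\CX$-side analogue of \cref{lem:switch}, provable by the same local rank-two computations in $\varphi_i(SL_2)$, so that the transition maps between word-charts are by construction the cluster $\CX$-transformations and the extension is regular with image in $G_{\Ad}^{u,v}$. The Poisson claim is then a direct computation: one evaluates the Sklyanin bracket on the coweight parametrization and matches it against $\{X_i,X_j\}=b_{ij}d_jX_iX_j$, and since the $\CX$-cluster transformations are Poisson and the standard bracket on $G_{\Ad}$ is invariant along this family of parametrizations, it suffices to verify the bracket on one chart, generalizing the finite-type computation of \cite{Fock2006}.

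The principal obstacle is the final identification in part (1), $\BC[G^{u,v}]\cong\ol{\CA}(\Si_{\mb{i}})$. Verifying the full-rank hypothesis of \cref{prop:uppercluster} and, more delicately, controlling the geometry so that the initial and one-step mutation charts genuinely cover $G^{u,v}$ up to codimension two, is where the infinite-dimensional setting demands the most care: one must confirm that the frozen minors are precisely the functions inverted in the localization and that each unfrozen minor's vanishing locus is met by the appropriate mutation chart, so that the normality argument for $G^{u,v}$ closes without leaving uncovered boundary components.
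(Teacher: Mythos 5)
Your overall architecture matches the paper's — \cref{prop:XtoA} plus \cref{prop:modifiedensmap} for part (3), \cref{prop:uppercluster} plus a codimension-two covering argument for part (1), and a one-chart Poisson computation for part (2) — but there are two genuine gaps at exactly the points you flag as delicate. First, in part (1) you assert that mutation at an unfrozen index is "realized on minors by \cref{prop:gendetid} together with \cref{lem:switch}, showing that the mutated cluster variable is again a generalized minor." This is false in general: only mutations at indices $k$ for which $i_k$ and $i_{k^+}$ can be brought adjacent with opposite signs are realized by moves on the double reduced word, and for a generic unfrozen $k$ the exchange partner $A'_k$ is \emph{not} a generalized minor (the paper computes it explicitly as, e.g., $(\De_{e,e}^{\om_i}\De_{s_i,v^{-1}}^{\om_i}-\De_{s_i,e}^{\om_i}\De_{e,v^{-1}}^{\om_i})$ divided by a monomial in principal minors). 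Proving that $A'_k$ is nevertheless regular is the hardest lemma in the paper: it requires reducing to the case $k=1$, $k^+=m$, a four-case analysis on the signs of $i_1,i_m$, reduction to sorted words, and a divisibility argument in the UFD $\BC[G]$ using \cref{prop:prime} and \cref{prop:gendetid}. Relatedly, your codimension-two step needs more than "the exchange relation is a sum of nonvanishing monomials": from $A_kA'_k=P_1+P_2$ one cannot immediately conclude $A'_k\neq 0$ at the generic point of $\{A_k=0\}$, since the sum of two nonvanishing monomials can vanish there; the paper instead factors $A'_k$ as an irreducible $A''_k$ times a Laurent monomial in the $A_i$, $i\neq k$, and shows $A''_k$ is distinct from $A_k$, so the complement of the charts is cut out by two distinct irreducibles.

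Second, in part (2) you propose to extend $x_{|\Si_{\mb{i}}|}$ over all of $\CX_{|\Si_{\mb{i}}|}$ by showing that changes of double reduced word induce $\CX$-mutations. This only reaches the (finitely many) charts indexed by words, whereas $\CX_{|\Si_{\mb{i}}|}$ is glued from the tori of \emph{all} seeds mutation equivalent to $\Si_{\mb{i}}$, generically infinitely many. The paper handles one-step mutations by a direct (and nontrivial) regularity computation (\cref{prop:x_k}), which already requires \cref{lem:regularconj} to control conjugates $x_{\pm i}(p^{-1})x_{\pm j}(p^{-C_{ij}}q)x_{\pm i}(-p^{-1})$ as $p\to 0$ and \cref{lem:closure} to verify the image stays in $G^{u,v}_{\Ad}$ rather than its closure; arbitrary charts are then reached by an integral-closure argument (\cref{prop:mainX}, and analogously \cref{lem:mutiso} on the $\CA$-side) exploiting that $p_M$ and $p_G$ are finite and the sources are normal. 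That finiteness-plus-normality mechanism, which transfers regularity between the $\CA$- and $\CX$-levels, is the missing ingredient your proposal needs both to extend $x_{|\Si_{\mb{i}}|}$ beyond word-charts and to show the mutation tori $\CA_k$ actually map regularly into $G^{u,v}$.
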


The proof will occupy the rest of the paper.  We treat each statement separately, as \cref{thm:clusterthm,prop:mainX,prop:Poissonmap}.

\begin{remark}
The term cluster ensemble was used in \cite{Fock2003} to refer to the complete structure formed by the pair $\CA_{|\Si|}$, $\CX_{|\Si|}$ and the map $p_0$.  In general $p_0$ has positive dimensional fibers, and its image is a symplectic leaf of $G_{\Ad}^{u,v}$.  However, it is clear from \cref{prop:XtoA} that $p_M$ is a finite covering map.  Thus it is natural to summarize \cref{thm:ensthm} as saying that the double Bruhat cells $G^{u,v}$, $G_{\Ad}^{u,v}$ and the map $p_G$ form a ``nondegenerate'' cluster ensemble.  

This statement should be understood with the caveat that the maps $a_{|\Si_{\mb{i}}|}$, $x_{|\Si_{\mb{i}}|}$ are typically not biregular; rather, the complement of their images will have codimension at least 2.  In addition, the scheme $\CX_{|\Si|}$ is not separated in general.  Thus while the restriction of $x_{|\Si_{\mb{i}}|}$ to any individual torus $\CX_{\Si}$ is injective, this is not obviously the case for the entire map $x_{|\Si_{\mb{i}}|}$.
\end{remark}

\begin{example}
The exact form of the modified exchange matrix $\wt{B}$ is clarified by considering the degenerate example where $u$ and $v$ are the identity.  The relevant double Bruhat cells are then the Cartan subgroups $H$ and $H_{\Ad}$, and the cluster variables and $\CX$-coordinates are their respective coroot and coweight coordinates.  The change of variables between these is the Cartan matrix, and this is exactly what the definition of $\wt{B}$ reduces to in this case (note that the twist map is trivial when $u$ and $v$ are).  

The theorem then says that in general to get the twisted change of variables matrix, we add to the exchange matrix a copy of the Cartan matrix split in half between the ``left'' and ``right'' frozen variables.  As a typical example, let $u$ and $v$ be Coxeter elements of the affine group of type $A_1^{(1)}$.  For the natural choice of fundamental weights the extended Cartan matrix is 
\[
C = \begin{pmatrix} 2 & -2 & 1 \\ -2 & 2 & 0 \\ 1 & 0 & 0 \end{pmatrix}.
\]
\end{example}
If we take $\mb{i} = (-1,-2,1,2)$, then from the definitions one checks that
\begin{gather*}
B = \begin{pmatrix} 0 & 0 & -\frac12 & 1 & 0 & -\frac12 & 0 \\ 0 & 0 & 1 & -2 & 1 & 0 & 0 \\ \frac12 & -1 & 0 & 1 & 0 & 0 & 0 \\ -1 & 2 & -1 & 0 & 0 & -1 & 0 \\ 0 & -1 & 0 & 0 & 0 & 2 & -1 \\ \frac12 & 0 & 0 & 1 & -2 & 0 & 1 \\ 0 & 0 & 0 & 0 & 1 & -1 & 0 \end{pmatrix}, \quad
M = \begin{pmatrix} 0 & 0 & \frac12 & 0 & 0 & \frac12 & 0 \\ 0 & 1 & -1 & 0 & 0 & 0 & 0 \\ \frac12 & -1 & 1 & 0 & 0 & 0 & 0 \\ 0 & 0 & 0 & 0 & 0 & 0 & 0 \\ 0 & 0 & 0 & 0 & 0 & 0 & 0 \\ \frac12 & 0 & 0 & 0 & 0 & 1 & -1 \\ 0 & 0 & 0 & 0 & 0 & -1 & 1 \end{pmatrix}, \\
\wt{B} = \begin{pmatrix} 0 & 0 & 0 & 1 & 0 & 0 & 0 \\ 0 & 1 & 0 & -2 & 1 & 0 & 0 \\ 1 & -2 & 1 & 1 & 0 & 0 & 0 \\ -1 & 2 & -1 & 0 & 0 & -1 & 0 \\ 0 & -1 & 0 & 0 & 0 & 2 & -1 \\ 1 & 0 & 0 & 1 & -2 & 1 & 0 \\ 0 & 0 & 0 & 0 & 1 & -2 & 1 \end{pmatrix}.
\end{gather*}
Note in particular that while $B$ is degenerate, reflecting the fact that the symplectic leaves of $G_{\Ad}^{u,v}$ have positive codimension, $|\det{\wt{B}}| = 2$, reflecting the fact that $p_G$ is a double cover.  Furthermore, $\wt{B}$ has integral entries, while $B$ may in general have half-integral entries where both the row and column correspond to frozen variables.

\subsection{Cluster Transformations of $\CX$-coordinates}\label{sec:Xtransformations}

Recall that in \cref{defn:Xdef} we constructed an explicit regular map $x_{\Si_{\mb{i}}}: \CX_{\Si_\mb{i}} \to G_{\Ad}^{u,v}$ (from now on we identify the tori $\CX_{\mb{i}}$ and $\CX_{\Si_{\mb{i}}}$ in the obvious way).  If $\Si'$ is obtained from $\Si_{\mb{i}}$ by a single mutation, we now show that this extends to a regular map $\CX_{\Si'} \to G_{\Ad}^{u,v}$, compatible with the cluster transformation between $\CX_{\Si_{\mb{i}}}$ and $\CX_{\Si'}$.  This generalizes a closely related statement in \cite[4.4]{Zelevinsky2000}.

\begin{prop}\label{prop:x_k}
Let $\Si_{\mb{i}}$ be the seed associated with a double reduced word $\mb{i}$, and $\CX_k:= \CX_{\mu_k(\Si_{\mb{i}})}$ for some index $k \in I \setminus I_0$.  There is a unique regular map $x_k:\CX_k \to G_{\Ad}^{u,v}$ such that the following diagram commutes:


\vspace{-2mm}
\[
\begin{tikzcd}
\CX_{\Si_{\mb{i}}} \arrow[dashed]{rr}{\mu_k} \arrow{dr}[swap]{x_{\Si_{\mb{i}}}} && \CX_k \arrow{dl}{x_k} \\
& G_{\Ad}^{u,v} &
\end{tikzcd}
\]
\end{prop}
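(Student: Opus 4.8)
The plan is to define $x_k$ as the rational map $x_{\Si_{\mb{i}}} \circ \mu_k^{-1}$ and then prove that it is regular everywhere. Uniqueness is immediate: the cluster transformation $\mu_k\colon \CX_{\Si_{\mb{i}}} \dashrightarrow \CX_k$ of \cref{eq:Xtrans} is a birational isomorphism of tori, hence dominant, so any two morphisms $\CX_k \to G_{\Ad}^{u,v}$ that pull back to $x_{\Si_{\mb{i}}}$ agree on a dense open set and therefore coincide, $G_{\Ad}^{u,v}$ being separated. Setting $x_k := x_{\Si_{\mb{i}}} \circ \mu_k^{-1}$, the map $x_k$ is automatically regular wherever $\mu_k^{-1}$ is defined, namely on the complement of the divisor $D = \{X'_k = -1\} \subset \CX_k$, and the triangle commutes there by construction. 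Thus the whole content of the proposition is that $x_k$ extends regularly across $D$.

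To establish this I would use that $G_{\Ad}^{u,v}$ is an affine variety (being a double Bruhat cell, cf. \cref{prop:fdimdbc}) whose coordinate ring is generated by matrix coefficients of the representations $L(\la)$. Since $\CX_k$ is a smooth, hence normal, torus, it suffices to check that for each such matrix coefficient $f$ the rational function $x_k^* f = f\bigl(x_{\Si_{\mb{i}}}(\mu_k^{-1}(X'))\bigr)$ has no pole along $D$; regularity away from $D$ is already known, and normality of $\CX_k$ together with affineness of the target then promotes codimension-one regularity to global regularity. Writing out the involution $\mu_k^{-1}$ via \cref{eq:Xtrans}, one has $(\mu_k^{-1})^* X_k = (X'_k)^{-1}$ and $(\mu_k^{-1})^* X_j = X'_j (X'_k)^{[-b_{jk}]_+}(1+X'_k)^{b_{jk}}$ for $j \neq k$. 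Substituting these into the product of \cref{defn:Xdef} expresses each matrix coefficient as a Laurent polynomial in the $X'_j$ and in $(1+X'_k)$, and the only possible poles along $D$ are the negative powers $(1+X'_k)^{b_{jk}}$ arising from indices $j$ with $b_{jk} < 0$. The claim is thereby reduced to showing that these factors cancel.

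The cancellation is a rank-one phenomenon localized at the letter $i := |i_k|$. By \cref{def:wordseed} the indices $j$ with $b_{jk} \neq 0$ are $j = k^{-}$ and $j = k^{+}$, together with those whose interval $(j,j^{+})$ interleaves $(k,k^{+})$; note that every $\ell$ strictly between $k$ and $k^{+}$ has $|i_\ell| \neq i$. I would isolate the window of factors of $x_{\mb{i}}$ running from position $k$ to position $k^{+}$ and rewrite it using the single $\varphi_i(SL_2)$-identity underlying the $A_1$ mutation rule of \cref{eq:Xtrans}. Since $t^{\om_j^\vee}$ commutes with $x_{\pm i}(\cdot)$ whenever $j \neq i$, the coweight factors of the interleaving letters can be transported out of the window freely; the genuinely noncommuting pieces are the Chevalley generators $E_{i_\ell}$ of the interleaving letters, and the compensating modifications of their coordinates, governed by the betweenness terms of $b_{jk}$, are exactly what turns the local rewriting into an identity. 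Carried out, the factors $(1+X'_k)^{b_{jk}}$ with $b_{jk}<0$ recombine into the monomial prefactors prescribed by the mutated coweight exponents and disappear, so that $x_k^* f$ is regular along $D$.

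The main obstacle is this last step: verifying the local rewriting in the presence of the interleaving letters, that is, matching the nontrivial exponents produced by commuting the $E_{i_\ell}$ through the window against the betweenness entries of $b_{jk}$. I expect the cleanest route is to settle the adjacent case $k^{+} = k+1$ first by a direct computation in $\varphi_i(SL_2)$, and then to reduce the general case to it by commuting the interleaving coweight factors aside and tracking the Chevalley relations, the bookkeeping being precisely calibrated by \cref{def:wordseed}. This parallels, and extends to the symmetrizable Kac-Moody setting, the computation of \cite[4.4]{Zelevinsky2000}.
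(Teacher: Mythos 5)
Your overall strategy coincides with the paper's: reduce to the window between $k$ and $k^+$, observe that the $(1+X'_k)$-factors coming from the coweight terms cancel by a counting argument on the signs of $b_{jk}$, and then deal with the interleaving Chevalley generators $E_{i_\ell}$, $k<\ell<k^+$, which is correctly identified as the main obstacle. The gap is in your proposed resolution of that obstacle. After the coweight factors are commuted aside, what remains for each interleaving letter is a conjugate of the form $x_{\pm i}(p^{-1})\,x_{\pm j}(p^{-C_{ij}}q)\,x_{\pm i}(-p^{-1})$ with $p=1+X'_k\to 0$ along $D$, and there is no way to handle this by ``tracking the Chevalley relations'' or by reducing to the adjacent case $k^+=k+1$ (in which no such conjugates occur at all, so nothing is being reduced to). In a general symmetrizable Kac-Moody group the subgroup generated by $e_i$ and $e_j$ is infinite-dimensional and the conjugate has components in infinitely many root spaces, including imaginary ones; this is precisely why the paper notes that the finite-type computation of \cite[Lemma 4.4]{Zelevinsky2000} does not extend, and why it proves a separate representation-theoretic lemma (\cref{lem:regularconj}), using the closed embedding $N_+\hookrightarrow\bigoplus_i L(\om_i)^\vee$ and an explicit analysis of the action on lowest-weight vectors, to show that this conjugate extends regularly to $p=0$. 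Without this lemma or an equivalent, your argument does not close.

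A second, smaller issue: the coordinate ring of $G_{\Ad}^{u,v}$ is not generated by matrix coefficients of the $L(\la)$ alone; one must also invert the frozen minors $\De^{\om_i}_{u,e}$ and $\De^{\om_i}_{e,v^{-1}}$. Checking that matrix coefficients have no pole along $D$ only produces a regular map into the closure $\ol{G_{\Ad}^{u,v}}$; one must separately verify that the image avoids the vanishing loci of these minors, i.e.\ lands in the open cell. The paper does this in the final paragraph of its proof via \cref{lem:closure} and the fact that suitable powers of the frozen minors are Laurent monomials in the $p_G^*(Y_i)$. Your proposal omits this step.
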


\begin{proof}
First note that since $\mu_k$ and $x_{\Si_{\mb{i}}}$ are birational, there is a unique rational map $x_k$ making the diagram commute; the claim is that this is in fact regular.  

We will let $Y_i:=X'_i$ denote the $\CX$-coordinates on $\CX_k$.  The cluster transformation \cref{eq:Xtrans} lets us express the $X_i$ as rational functions of the $Y_i$, and with this in mind we write the rational map $x_k$ as
\begin{align}\label{eq:mutXcoords}
(Y_{-\wt{r}},\dots,Y_m) \mapsto X^{\om_{\wt{r}}^\vee}_{-\wt{r}}\cdots X^{\om_1^\vee}_{-1} E_{i_1} X^{\om_{|i_1|}^\vee}_1 \cdots X^{\om_{|i_m|}^\vee}_m
\end{align}
Note that if $i > k^+$ or $i^+ < k$, we have $Y_i = X_i$ by \cref{eq:Xtrans} and \cref{def:wordseed}.  In particular, the corresponding terms in \cref{eq:mutXcoords} do not affect whether or not the overall expression defines a regular map.  Thus it suffices to consider the case where $k = 1$ and $k^+ = m$, to which we will now restrict our attention (given this, we will write $i$ in place of $|i_1|=|i_m|$).  

Define rational maps $g_j: \CX_1 \to G$ by
\begin{gather*}
g_j = \begin{cases} \biggl( \prod_{j \in I}X^{\om_{|i_j|}^\vee}_j \biggr) x_{i_1}(X_1^{-\ep_1}X_m^{-\ep_1})x_{i_m}(X_m^{-\ep_m}) & j = 1 \\ x_{i_m}(-X_m^{-\ep_m})x_{i_j}\biggl( \prod_{\substack{j \leq \ell <m \\ |i_j| = |i_\ell|}}X_{\ell}^{-\ep_j} \biggr) x_{i_m}(X_m^{-\ep_m})& 1 < j \leq m, \end{cases}
\end{gather*}
again interpreting the $X_i$ as rational functions of the $Y_i$ on the right-hand side.  
Then
\begin{gather*}
X^{\om_{\wt{r}}^\vee}_{-\wt{r}}\cdots X^{\om_1^\vee}_{-1} E_{i_1} X^{\om_{|i_1|}^\vee}_1 \cdots X^{\om_{|i_m|}^\vee}_m = g_1\cdots g_m,
\end{gather*}
so it suffices to prove that each $g_j$ is regular (and that their product lands in $G_{\Ad}^{u,v}$).  The details of the argument depend on the signs of $i_1$ and $i_m$, so we consider the distinct cases separately.

\textbf{Case 1, $i_1 = i_m = i$:} First consider $g_1$.  By \cref{def:wordseed} we have $b_{-i,1}=-1$ and $b_{m,1}=1$, hence
\[
X_{-i} = Y_{-i}Y_1(1+Y_1)^{-1}, \quad X_m = Y_m(1+Y_1).
\]
 Thus  
\begin{align*}
\biggl( \prod_{\substack{j \in I \\ |i_j| = i}}X_j^{\om_i^\vee} \biggr) & = \biggl( Y_{-i}Y_1(1+Y_1)^{-1} \biggr)^{\om_i^\vee} Y_1^{-\om_i^\vee} \biggl( Y_m(1+Y_1) \biggr)^{\om_i^\vee} \\
& = (Y_{-i}Y_m)^{\om_i^\vee},
\end{align*}
which is a regular function of the $Y_j$.  

In fact, for any $1 \leq j \leq \wt{r}$ such that $i \neq j$, there are as many indices $k \in I$ with $|i_k| = j$ and $b_{k,1} > 0$ as there are with $|i_k| = j$ and $b_{k,1} < 0$.  One has $b_{k,1} > 0$ exactly either when $1<k<k^+< m$ and $\ep_k = -\ep_{k^+} = -1$, or when $k=-j$, $1<k^+< m$, and $\ep_{k^+} = 1$.  Similarly $b_{k,1} < 0$ exactly either when $1<k<k^+<m$ and $\ep_k = -\ep_{k^+} = 1$, or when $1<k< m<k^+$ and $\ep_k = 1$. One can check that the latter situations are in bijection with the former.

If $|i_k| = j$ for some index $k \in I$, we have
\[
X_k = \begin{cases}
Y_k(1+Y_1)^{-C_{ij}} & b_{k,1} > 0 \\
Y_kY_1^{-C_{ij}}(1+Y_1)^{C_{ij}} & b_{k,1} <0 \\
Y_k & b_{k,1} = 0.
\end{cases}
\]
But then by the above remark the positive and negative powers of $(1+Y_1)$ in 
\[
\prod_{\substack{k \in I \\ |i_k| = j}}X^{\om_{j}^\vee}_k
\]
cancel each another out, leaving a total expression which depends regularly on the $Y_k$.  Since this holds for all $1\leq j \leq \wt{r}$, it follows that $\prod_{j \in I}X^{\om_{|i_j|}^\vee}_j$ is a regular function of the $Y_k$.  Furthermore, we have 
\begin{align*}
x_{i_1}(X_1^{-\ep_1}X_m^{-\ep_1})x_{i_m}(-X_m^{-\ep_m}) & = x_i\big( Y_1 Y_m^{-1}(1+Y_1)^{-1}\big) x_i\big( Y_m^{-1}(1+Y_1)^{-1}\big) \\
& = x_i(Y_m^{-1}),
\end{align*}
and it follows that $g_1$ is regular.

Now consider $g_j$ for $j>1$.  If $\ep_j = -1$, then by following a similar analysis as above one sees that $\prod_{\substack{j \leq \ell <m \\ |i_j| = |i_\ell|}}X_{\ell}^{-\ep_j}$ is actually a regular function of the $Y_k$, since all $(1+Y_1)$ terms cancel out. Since in this case the $E_i$ terms commute with $E_{i_j}$, it follows that $g_j$ is regular. 

If $\ep_j = 1$, then $\prod_{\substack{j \leq \ell <m \\ |i_j| = |i_\ell|}}X_{\ell}^{-\ep_j}$ is equal to $(1+Y_1)^{-C_{i,|i_j|}}$ times some Laurent monomial $q$ in the $Y_k$.  But then 
\[
x_i\big(-Y_m^{-1}(1+Y_1)^{-1}\big)x_{i_j}\big(q(1+Y_1)^{-C_{i,|i_j|}}\big)x_i\big(Y_m^{-1}(1+Y_1)^{-1}\big)
\]
is regular by \cref{lem:regularconj}.

\textbf{Case 2, $i_1 = i, i_m = -i$:}  Again, first consider $g_1$.  Now $b_{-i,1}$ and $b_{m,1}$ are both equal to $-1$, so 
\[
X_{-i} = Y_{-i}Y_1(1+Y_1)^{-1} \text{ and } X_m = Y_mY_1(1+Y_1)^{-1}.
\]
Thus 
\begin{align*}
\prod_{\substack{j \in I \\ |i_j| = i}}X^{\om_{|i_j|}^\vee}_j & = 
\biggl( Y_{-i}Y_1(1+Y_1)^{-1} \biggr)^{\om_{i}^\vee}Y_1^{-\om_i^\vee} \biggl( Y_mY_1(1+Y_1)^{-1} \biggr)^{\om_{i}^\vee} \\
& = \biggl( Y_{-i}Y_m(1+Y_1)^{-2} \biggr)^{\om_{i}^\vee}.
\end{align*}
This time for any  $1 \leq j \leq \wt{r}$ with $j \neq i$, there is exactly one more index $k \in I$ with $|i_k| = j$ and $b_{k,1} > 0$ than there is with $|i_k| = j$ and $b_{k,1} < 0$.  One has $b_{k,1} > 0$ exactly when either $1<k< m$ and $\ep_k = -1$, or $k=-j$ with either $k^+>m$ or $1<k^+< m$ and $\ep_{k^+} = 1$.  On the other hand $b_{k,1} < 0$ if and only if $1<k<k^+< m$ and $\ep_k = - \ep_{k^+} = 1$.  Thus 
\[
\prod_{\substack{k \in I \\ |i_k| = j}}X^{\om_{j}^\vee}_k
\]
 is the product of $(1+Y_1)^{-C_{ij}\om_j^\vee}$ and a term which is regular in the $Y_k$.  

It follows that $\prod_{j \in I}X^{\om_{|i_j|}^\vee}_j$ is the product of a regular term and
\begin{gather*}
\prod_{1 \leq j \leq \wt{r}}(1+Y_1)^{-C_{ij}\om_j^\vee} = (1+Y_1)^{-\al_i^\vee}.
\end{gather*}
Finally $g_1$ itself is then the product of a regular term and
\begin{align*}
(1+Y_1)^{-\al_i^\vee}x_{i_1}(X_1^{-\ep_1}X_m^{-\ep_1})x_{i_m}(X_m^{-\ep_m}) & = (1+Y_1)^{-\al_i^\vee}x_i\big(Y_m^{-1}(1+Y_1)\big)x_{-i}\big(Y_mY_1(1+Y_1)^{-1}\big) \\
& = \varphi_i\begin{pmatrix} 1 & Y_m^{-1} \\ Y_1Y_m & 1 + Y_1 \end{pmatrix},
\end{align*}
hence is regular.  

Now consider $g_j$ for $j>1$.  This time if $\ep_j = 1$, $\prod_{\substack{j \leq \ell <m \\ |i_j| = |i_\ell|}}X_{\ell}^{-\ep_j}$ is a Laurent monomial in the $Y_k$, the $(1+Y_1)$ terms cancelling.  If $\ep_j = -1$, the relevant expression becomes
\[
x_{-i}\big(-Y_mY_1(1+Y_1)^{-1}\big)x_{i_j}\big(q(1+Y_1)^{-C_{i,|i_j|}}\big)x_{-i}\big(Y_mY_1(1+Y_1)^{-1}\big)
\]
for some Laurent monomial $q$ in the $Y_k$.  Again, this is regular by \cref{lem:regularconj}.

The remaining cases of $i_1 = i_m = -i$ and $i_1 = -i_m = -i$ do not differ substantively from the above two; the details are left to the reader.  

It is clear that the image of $\CX_1$ in $G_{\Ad}$ lands in the closure of $G^{u,v}_{\Ad}$.  Consider the extension of the regular map $p_G:G^{u,v} \to G_{\Ad}^{u,v}$ to a rational map between their closures.  By \cref{prop:XtoA,prop:modifiedensmap} we can write the rational functions $p_G^*(Y_i)$ on $\ol{G^{u,v}}$ as Laurent monomials in $A'_1$ and the $A_i$ with $i \neq 1$, where $A'_1$ is the rational function on $G^{u,v}$ obtained by \cref{eq:Atrans}.  Since $p_G$ is a finite covering map, by \cref{prop:XtoA} the determinant $D$ of the matrix $\wt{B}$ is a nonzero integer.  In particular, we can write each $(A_i)^{D}$ with $i \neq 1$ as a Laurent monomial in the $p_G^*(Y_i)$.  But the generalized minors $\De_{u,e}^{\om_i}$ and $\De_{e,v^{-1}}^{\om_i}$ are frozen cluster variables, hence their $D$th powers can be expressed as Laurent monomials in the $p_G^*(Y_i)$.  Thus these powers, hence the minors themselves, are nonvanishing on $p_G^{-1}(\CX_1)$.  Since $p_G$ is the composition of a biregular automorphism of $G^{u,v}$ and the quotient map $\pi_G:G^{u,v} \to G^{u,v}_{\Ad}$, it follows that these minors do not vanish on $\pi_G^{-1}(\CX_1)$.  The fact that the image of $\CX_1$ lies in $G^{u,v}$ then follows by \cref{lem:closure}.  
\end{proof}

The following result was proved in finite type in \cite[Lemma 4.4]{Zelevinsky2000}.  However, the proof in loc.\ cited does not extend to the general case, as it involves exponentiating Lie algebra elements which in general have components in imaginary root spaces.

\begin{lemma}\label{lem:regularconj}
For distinct $1\leq i,j \leq r$ the map $\BC^* \times \BC \to N_\pm$ given by 
\[
(p,q) \mapsto x_{\pm i}(p^{-1})x_{\pm j}(p^{-C_{ij}}q)x_{\pm i}(-p^{-1})
\]
extends to a regular map $\BC^2 \to N_\pm$.
\end{lemma}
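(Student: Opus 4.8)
The plan is to treat the $N_+$ case, the $N_-$ case being identical after exchanging the roles of $e_i,f_i$, and crucially to avoid decomposing the conjugate into a product of one-parameter root subgroups, since that is precisely what forces one to exponentiate imaginary root vectors and breaks the finite-type argument. Instead I would pass through the adjoint action. Writing $x_i(t)=\exp(te_i)$ and $x_j(s)=\exp(se_j)$, conjugation gives
\[
x_i(t)\,x_j(s)\,x_i(-t)=\exp\bigl(s\,\Ad(x_i(t))e_j\bigr)=\exp\Bigl(s\sum_{k\geq 0}\tfrac{t^k}{k!}(\ad e_i)^k e_j\Bigr).
\]
By the Serre relation $(\ad e_i)^{1-C_{ij}}e_j=0$ from \cref{sec:KMalg}, the sum truncates at $k=-C_{ij}$, so $\Ad(x_i(t))e_j$ is a \emph{polynomial} in $t$ valued in the finite sum of real root spaces $\bigoplus_{k=0}^{-C_{ij}}\fg_{\al_j+k\al_i}$.

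The heart of the matter is then a purely numerical observation. Substituting $t=p^{-1}$ and $s=p^{-C_{ij}}q$ yields
\[
Y(p,q):=s\,\Ad(x_i(t))e_j=q\sum_{k=0}^{-C_{ij}}\frac{p^{-C_{ij}-k}}{k!}(\ad e_i)^k e_j,
\]
and since $0\leq -C_{ij}-k$ for every $k$ in this range, every exponent of $p$ is a nonnegative integer. Thus $Y(p,q)$ is a polynomial map $\BC^2\to\fg$ into the nilpotent subalgebra $\bigoplus_{k=0}^{-C_{ij}}\fg_{\al_j+k\al_i}$, and the map of the lemma is exactly $(p,q)\mapsto\exp\bigl(Y(p,q)\bigr)$. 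As a sanity check, at $p=0$ only the $k=-C_{ij}$ term survives, and one recovers the one-parameter subgroup of the real root $s_i\al_j=\al_j-C_{ij}\al_i$.

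It remains to argue that $(p,q)\mapsto\exp(Y(p,q))$ is regular, and this is where the only genuine content lies. I would test regularity on strongly regular functions: by \cref{prop:sreg} it suffices to show that for each $\la\in P_+$, each $v\in L(\la)$, and each $\psi\in L(\la)^\vee$, the function $(p,q)\mapsto\langle\psi\,|\,\exp(Y(p,q))\,v\rangle$ is a polynomial. Since $Y(p,q)$ is a sum of positive root vectors for roots of the form $\al_j+k\al_i$, each application of $Y(p,q)$ raises the $\al_j$-coefficient of the weight by exactly one, and this coefficient is bounded above on $L(\la)$; hence there is an integer $N$, independent of $(p,q)$, with $Y(p,q)^n v=0$ for all $n>N$. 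Therefore $\exp(Y(p,q))\,v=\sum_{n=0}^{N}\tfrac{1}{n!}Y(p,q)^n v$ is a finite sum of vectors depending polynomially on $(p,q)$, so every matrix coefficient is a polynomial. This shows the map is regular into $G$; as its image is visibly contained in $N_+$, it is regular into $N_+$.

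The main obstacle, and the reason the proof of \cite[Lemma 4.4]{Zelevinsky2000} does not transport, is exactly this final regularity step: one cannot write $\exp(Y(p,q))$ as a finite product of root one-parameter subgroups, because the brackets $[\,\fg_{\al_j+k\al_i},\fg_{\al_j+l\al_i}\,]$ land in root spaces of the form $2\al_j+\cdots$ that may be imaginary. The device that circumvents this is to never expand the exponential into root coordinates at all, but instead to verify regularity representation-theoretically through the local nilpotence of $Y(p,q)$ on highest-weight modules, which requires only that $Y$ raises weights and no control whatsoever of imaginary root vectors.
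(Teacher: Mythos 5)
Your argument is correct, and it takes a genuinely different route from the paper's. The paper also reduces regularity to the closed embedding $n \mapsto n\cdot(v_1,\dots,v_{\wt{r}})$ of $N_+$ into $\bigoplus_k L(\om_k)^\vee$ from \cite[7.4]{Kumar2002}, but then expands the triple product directly against the two relevant lowest-weight vectors $v_i$ and $v_j$ and does a case-by-case $SL_2$-string computation to watch the negative powers of $p$ cancel. You instead do a single computation upstairs in the Lie algebra: the Serre relation truncates $\Ad(x_i(t))e_j$, the substitution $t=p^{-1}$, $s=p^{-C_{ij}}q$ leaves only nonnegative powers of $p$, and local nilpotence on integrable highest-weight modules then makes every matrix coefficient of $\exp(Y(p,q))$ polynomial at once. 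This is cleaner and replaces the paper's two-case bookkeeping by one uniform argument; what the paper's version buys is that it only ever manipulates the finitely many vectors actually needed for the embedding.

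Two small repairs. First, the intermediate root spaces $\fg_{\al_j+k\al_i}$ for $0<k<-C_{ij}$ need not be real: in type $A_1^{(1)}$ one has $\fg_{\al_0+\al_1}=\fg_\de$, which is imaginary. So you should not describe $\bigoplus_{k=0}^{-C_{ij}}\fg_{\al_j+k\al_i}$ as a sum of real root spaces (nor as a subalgebra, since brackets land in root spaces of the form $2\al_j+\cdots$). Nothing in your proof uses reality --- only that $Y(p,q)$ is a polynomial family of locally nilpotent weight-raising elements --- and indeed that is precisely why your approach survives the presence of imaginary roots, which is the obstruction the lemma is designed to circumvent. Second, \cref{prop:sreg} only identifies $\BC[G]$ as a $G\times G$-module; to conclude that a set-theoretic map whose matrix coefficients are polynomial is a regular map of ind-varieties into $N_+$, you should invoke the closed embedding of \cite[7.4]{Kumar2002}, as the paper does. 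Your computation of course covers the particular matrix coefficients that embedding requires, so this is a matter of citation rather than of substance.
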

\begin{proof}
We prove the statement for $N_+$; the $N_-$ version then follows after applying the involution $\th$.  Recall from \cite[7.4]{Kumar2002} that the map
\[
N_+ \to \bigoplus_{1\leq i \leq \wt{r}}L(\om_i)^{\vee}, \quad n \mapsto n \cdot (v_1,\dots,v_{\wt{r}})
\]
is a closed embedding of ind-varieties, where $v_i$ is the lowest-weight vector of $L(\om_i)^{\vee}$.  Thus it suffices to show that
\[
(p,q) \mapsto x_i(p^{-1})x_j(p^{-C_{ij}}q)x_i(-p^{-1})\cdot v_k
\]
extends regularly to $p=0$ for all $1 \leq k \leq \wt{r}$. This is immediate unless $k$ is equal to $i$ or $j$.  

If $k=j$, then
\[
x_i(p^{-1})x_j(p^{-C_{ij}}q)x_i(-p^{-1})\cdot v_j = x_i(p^{-1})\cdot(v_j + p^{-C_{ij}}qe_jv_j),
\]
where $e_j$ is $j$th the positive Chevalley generator.  Since $e_jv_j$ is a lowest-weight vector for the $\varphi_i(SL_2)$-subrepresentation it generates and $\langle -\om_j + \al_j | \al_i^\vee \rangle = C_{ij}$, we have
\begin{align*}
x_i(p^{-1})\cdot(v_j + p^{-C_{ij}}qe_jv_j) & = \sum_{n=0}^{\infty}p^{-n}\frac{e_i^n}{n!}(v_j + p^{-C_{ij}}qe_jv_j) \\
& = v_j + \sum_{n=0}^{-C_{ij}}p^{-C_{ij} - n}\frac{qe_i^ne_j}{n!}v_j.
\end{align*}
Since this last expression depends only on nonnegative powers of $p$, the claim follows.  

If $k=i$, a similar calculation yields
\begin{align*}
x_i(p^{-1})x_j(p^{-C_{ij}}q)x_i(-p^{-1})\cdot v_i  & = x_i(p^{-1})x_j(p^{-C_{ij}}q)\cdot(v_i - p^{-1}e_iv_i)\\
& = x_i(p^{-1})\cdot\biggl( v_i - \sum_{n=0}^{-C_{ij}}p^{-1-nC_{ij}}\frac{q^ne_j^ne_i}{n!}v_i \biggr).
\end{align*}
If $n>0$, $e_j^ne_iv_i$ is a lowest-weight vector for the $\varphi_i(SL_2)$-subrepresentation it generates.  Otherwise, $-\om_i + n \al_j$ would have a nonzero weight space in $L(\om_i)^\vee$, which would generate a nontrivial $\varphi_j(SL_2)$-representation containing $v_i$, a contradiction.  

Since $\langle -\om_i +\al_i + n \al_j | \al_i^\vee \rangle = 1 + n C_{ij}$, 
\[
x_i(p^{-1})\cdot p^{-1-nC_{ij}}\frac{q^ne_j^ne_i}{n!}v_i = \sum_{m = 0}^{ - 1 - n C_{ij}}p^{-1-nC_{ij}-m}\frac{q^ne_i^me_j^ne_i}{m!n!}v_i.
\]
But since $-1 -nC_{ij}-m \geq 0$ for all $m\leq -1 - nC_{ij}$, the right hand side depends only on nonnegative powers of $p$.  But $x_i(p^{-1})x_j(p^{-C_{ij}}q)x_i(-p^{-1})\cdot v_i$ is a sum of such terms with $n>0$ and
\[
x_i(p^{-1})\cdot(v_i - p^{-1}e_iv_i) = v_i,
\]
hence extends to a regular map at $p=0$. 
\end{proof}

\begin{lemma}
The closure of $G^{u,v}$ in $G$ is
\[
\ol{G^{u,v}} = \bigsqcup_{\substack{u' \leq u \\ v' \leq v}}G^{u',v'},
\]
where we use the Bruhat order on $W$.  If $x \in \ol{G^{u,v}}$, then $x \in G^{u,v}$ if and only if $\De_{u,e}^{\om_i}(x) \neq 0$ and $\De_{e,v^{-1}}^{\om_i}(x) \neq 0$ for all $1 \leq i \leq \wt{r}$.\footnote{In finite type a stronger version of this is stated in \cite[Proposition 2.8]{Berenstein2005}, following from the proof of \cite[Proposition 3.3]{Fomin2000}.}\label{lem:closure}
\end{lemma}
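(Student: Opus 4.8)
The plan is to prove the two assertions separately: first the description of $\ol{G^{u,v}}$ as a union of cells, and then the minor criterion, which I would read off from the extremal weight structure of the fundamental representations $L(\om_i)$.

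For the closure, the inclusion $\ol{G^{u,v}} \subseteq \bigsqcup_{u' \leq u,\, v' \leq v} G^{u',v'}$ is immediate from the closure relations for single Bruhat cells. Since $G^{u,v} \subseteq B_+ \ol{u} B_+$ and $\ol{B_+ \ol{u} B_+} = \bigsqcup_{u' \leq u} B_+ \ol{u'} B_+$, with the symmetric statement for the negative cells (see \cite{Kumar2002}; the unions are finite because Bruhat intervals are finite), intersecting the two containments yields the upper bound. For the reverse inclusion I would fix a double reduced word $\mb{i} = (i_1,\dots,i_m)$ for $(u,v)$ and extend the parametrization of \cref{prop:factorization} to the regular map $\wh{x}_{\mb{i}} : \BC^m \times (\BC^*)^{\wt{r}} \to G$ defined by the same formula but allowing the first $m$ coordinates to vanish. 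As $\wh{x}_{\mb{i}}$ agrees with $x_{\mb{i}}$ on the dense torus $T_{\mb{i}}$, whose image lies in $G^{u,v}$, the whole image of $\wh{x}_{\mb{i}}$ lies in $\ol{G^{u,v}}$.

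Now, given $u' \leq u$ and $v' \leq v$, the subword property of the Bruhat order (\cite{Kumar2002}) furnishes a sub-double-word $\mb{i}''$ of $\mb{i}$ that is a double reduced word for $(u',v')$. Setting to zero exactly the coordinates of the deleted letters and keeping the others nonzero, the restriction of $\wh{x}_{\mb{i}}$ to this locus is precisely the map $x_{\mb{i}''}$, which by \cref{prop:factorization} is an open immersion onto an open subset of $G^{u',v'}$; this subset is dense since $G^{u',v'}$ is irreducible by \cref{prop:fdimdbc}. Thus $\ol{G^{u,v}}$ contains a dense subset of $G^{u',v'}$, and being closed it contains all of $G^{u',v'}$, establishing the partition.

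For the minor criterion, the forward direction merely restates earlier results: if $x \in G^{u,v}$ then $\ol{u}^{-1} x \in G_0$ and $x \ol{v}^{-1} \in G_0$ by \cref{cor:FZ2.9/10}, so by \cref{prop:G_0eq} and \cref{prop:repint} each $\De_{u,e}^{\om_i}(x) = \De^{\om_i}(\ol{u}^{-1}x)$ and $\De_{e,v^{-1}}^{\om_i}(x) = \De^{\om_i}(x\ol{v^{-1}})$ is nonzero (here $\ol{v}^{-1}$ and $\ol{v^{-1}}$ differ by an element of $H$, which only rescales the minor). For the converse I would argue by weights. Writing $\De_{u,e}^{\om_i}(x) = \langle \ol{u} v_{\om_i} | x v_{\om_i}\rangle$, the contravariant form pairs only equal weight spaces, and $\ol{u} v_{\om_i}$ spans the one-dimensional extremal weight space of weight $u\om_i$. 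If $x$ lies in the stratum $G^{u',v'}$ with $u' \leq u$, then decomposing $x \in N_+ \ol{u'} H N_+$ shows $x v_{\om_i}$ is a nonzero multiple of $n_+ \ol{u'} v_{\om_i}$ with $n_+ \in N_+$, all of whose weights lie in $u'\om_i + \sum_j \BZ_{\geq 0}\al_j$. Hence $\De_{u,e}^{\om_i}(x) \neq 0$ forces $u\om_i \geq u'\om_i$; combined with the standard inequality $u\om_i \leq u'\om_i$, valid for the dominant weight $\om_i$ whenever $u' \leq u$, this gives $u\om_i = u'\om_i$. Therefore if all $\De_{u,e}^{\om_i}(x)$ are nonzero then $u\om_i = u'\om_i$ for every $i$; since $\{\om_i\}$ is a basis of $\hh^*$ and $W$ acts faithfully, $u' = u$. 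The vanishing of some $\De_{e,v^{-1}}^{\om_i}$ when $v' < v$ follows by the mirror-image argument obtained from the transpose antiautomorphism $g \mapsto g^T$ (with $x_{\pm i}(t)^T = x_{\mp i}(t)$ and $\langle g v | w\rangle = \langle v | g^T w\rangle$), which exchanges $B_+$ and $B_-$ and reduces the right-hand minors to the case just treated.

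The main obstacle is the reverse inclusion of the closure formula: producing not merely a point of each lower cell $G^{u',v'}$ inside $\ol{G^{u,v}}$, but the entire cell. The resolution is to degenerate the parametrization $x_{\mb{i}}$ along a subword dictated by the subword property, so that the degenerate locus maps \emph{onto a dense open subset} of $G^{u',v'}$ rather than a single point; irreducibility of $G^{u',v'}$ then upgrades density to full containment. A secondary point, requiring care but presenting no real difficulty, is that the single-cell closure relations, the subword property, and the dominant-weight inequality must all be invoked in Kac--Moody generality, where they remain valid precisely because the relevant Bruhat intervals are finite.
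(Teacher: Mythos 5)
Your proof is correct and, for the substantive half of the lemma (the minor criterion), follows essentially the same route as the paper: both arguments rest on the fact that $u'\om_i - u\om_i$ is a nonnegative integer combination of simple roots, nonzero for some $i$ when $u' < u$, paired against the observation that the weights of $x\cdot v_{\om_i}$ for $x \in B_+\dot{u}'B_+$ all lie in $u'\om_i + \sum_j \BZ_{\geq 0}\al_j$. The only real divergence is in the division of labor: you prove the closure decomposition from scratch by degenerating the parametrization $x_{\mb{i}}$ along subwords (where the paper simply invokes the Schubert-variety closure relations of \cite{Kumar2002}), and conversely you cite as standard the dominant-weight inequality that the paper derives explicitly from a chain of reflections realizing $u' \leq u$.
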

\begin{proof}
The decomposition of $\ol{G^{u,v}}$ follows easily from the corresponding statement about Schubert varieties \cite[7.1]{Kumar2002}.  It is also clear from their definitions that the stated generalized minors do not vanish on $G^{u,v}$.  Thus we must show that if $x \in \ol{G^{u,v}}\setminus G^{u,v}$, one of the stated minors vanishes on it.

Suppose that $u' \leq u$ in the Bruhat order.  By definition, there exist positive real roots $\be_1,\dots,\be_k$ such that $u = u'r_1\cdots r_k$, where $r_j\in W$ is the reflection
\[
r_j: \la \mapsto \la - \langle \la | \be^\vee_j \rangle \be_j.
\] 
 Here $\be_j^\vee$ is the positive coroot associated with $\be_j$.  Moreover, these satisfy $\ell(u'r_1) < \ell(u'r_1r_2) < \cdots < \ell(u)$, which in particular implies that $u'r_1 \cdots r_{j-1}(\be_j) > 0$ for all $j$ \cite[1.3.13]{Kumar2002}. 

If $u' \leq u$, we claim that for each $\om_i$, 
\[
u'(\om_i)-u(\om_i) \in \bigoplus_{1 \leq j \leq r} \BN \al_j.
\]
For any $1 < j \leq r$ we have 
\[
u'r_1 \cdots r_{j-1}(\om_i) - u'r_1 \cdots r_j(\om_i) = \langle \om_i | \be_j^\vee \rangle u'r_1 \cdots r_{j-1}(\be_j).
\]
But then
\begin{align*}
u'(\om_i)-u(\om_i) & = \sum_{1 < j \leq r}\big( u'r_1 \cdots r_{j-1}(\om_i) - u'r_1 \cdots r_j(\om_i)\big) \\
& =\sum_{1 < j \leq r} \langle \om_i | \be_j^\vee \rangle u'r_1 \cdots r_{j-1}(\be_j),
\end{align*}
which is indeed a sum of positive roots with nonnegative coefficients.  Furthermore, if $u'$ is strictly less than $u$ in the Bruhat order, $u'(\om_i)-u(\om_i)$ must be nonzero for some $1 \leq i \leq r$.  But then for any $x \in B_+ u' B_+$, we have $\De_{u,e}^{\om_i}(x) = 0$.  A straightforward adaptation of this argument implies that for any $x \in B_- v' B_-$ with $v' < v$, $\De_{e,v^{-1}}^{\om_i}(x) = 0$ for some $1 \leq i \leq r$, and the lemma follows.
\end{proof}

\subsection{Cluster Transformations of Generalized Minors}\label{sec:clustersandminors}

Recall that to a double reduced word $\mb{i}$ we associated in \cref{defn:Aminors} a collection $\{A_i\}_{i \in I}$ of generalized minors.  In this section we identify these with the cluster variables corresponding to the seed $\Si_{\mb{i}}$ and study their cluster transformations.  

\begin{thm} \label{thm:clusterthm}
There is a regular map $a_{|\Si_{\mb{i}}|}: \CA_{|\Si_{\mb{i}}|} \to G^{u,v}$ which identifies the generalized minors of \cref{defn:Aminors} with the corresponding cluster variables on $\CA_{\Si_{\mb{i}}}$. This map induces an isomorphism of $\BC[G^{u,v}]$ and the upper cluster algebra $\BC[\CA_{|\Si_{\mb{i}}|}]$.  
\end{thm}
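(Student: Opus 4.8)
The plan is to prove the asserted ring isomorphism $\BC[G^{u,v}]\cong\BC[\CA_{|\Si_{\mb{i}}|}]$ and then recover the map $a_{|\Si_{\mb{i}}|}$ formally from it, using that $G^{u,v}$ is a finite-dimensional affine variety by \cref{prop:fdimdbc}. Indeed, $\BC[\CA_{|\Si_{\mb{i}}|}]$ is by definition the ring of global functions on the scheme $\CA_{|\Si_{\mb{i}}|}$, so once it is identified with $\BC[G^{u,v}]=\Gamma(G^{u,v},\CO)$ the canonical morphism from a scheme to the spectrum of its global functions will supply $a_{|\Si_{\mb{i}}|}:\CA_{|\Si_{\mb{i}}|}\to G^{u,v}$. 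First I would pin down the initial torus: combining \cref{thm:main} with the open immersion of \cref{prop:factorization}, the generalized minors $A_k$ of \cref{defn:Aminors} form a coordinate system on $G^{u,v}$. Writing $\phi=\ze^{u^{-1},v^{-1}}\circ\io$ for the automorphism appearing in \cref{thm:main}, the assignment $(A_k)_{k\in I}\mapsto\phi(x_{\mb{i}}(t))$ with $t_j=\prod_k A_k^{\Psi_{j,k}}$ (and the analogous Cartan monomials) is inverse to $x\mapsto(A_k(x))_k$ by \cref{thm:main}, so $a_{\Si_{\mb{i}}}$ is an open immersion of the torus $\CA_{\Si_{\mb{i}}}$ onto the open set $U_{\Si_{\mb{i}}}=\{x\in G^{u,v}:A_k(x)\neq 0 \text{ for all } k\}$, identifying each cluster variable with the corresponding minor.

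Next I would establish the single-step exchange relations. For an unfrozen index $k$ (so $k^+\leq m$), \cref{prop:gendetid} applied to the appropriate Weyl group elements expresses $A_k\cdot A_k'$, with $A_k'$ again a generalized minor, as a sum of two Laurent monomials in the remaining $A_j$; the prototype of this computation is \cref{lem:switch}. The content of this step is the combinatorial verification that these two monomials are precisely $\prod_{b_{kj}>0}A_j^{b_{kj}}$ and $\prod_{b_{kj}<0}A_j^{-b_{kj}}$ for the exchange matrix $B=B_{\mb{i}}$ of \cref{def:wordseed}, so that the identity is exactly the exchange relation \cref{eq:Atrans}. In particular each mutated cluster $\{A_k'\}\cup\{A_j\}_{j\neq k}$ again consists of regular functions forming a coordinate system, so $a$ extends to an open immersion $a_{\mu_k(\Si_{\mb{i}})}:\CA_{\mu_k(\Si_{\mb{i}})}\to G^{u,v}$ onto an open set $U_{\mu_k(\Si_{\mb{i}})}$, compatibly with $\mu_k$.

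With the initial seed and its neighbours understood, the isomorphism of rings follows from the upper-bound criterion. The unfrozen rows of $B$ coincide with those of $\wt{B}=B+M$, which is nondegenerate by \cref{prop:XtoA} (since $p_G$ is a finite covering by \cref{prop:fincov}); hence the unfrozen part of $B$ has full rank and \cref{prop:uppercluster} gives $\ol{\CA}(\Si_{\mb{i}})=\BC[\CA_{\Si_{\mb{i}}}]\cap\bigcap_{k\notin I_0}\BC[\CA_{\mu_k(\Si_{\mb{i}})}]$. Transporting along the open immersions, this intersection becomes the ring of functions regular on $U:=U_{\Si_{\mb{i}}}\cup\bigcup_k U_{\mu_k(\Si_{\mb{i}})}$. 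One inclusion is immediate: any $g\in\BC[G^{u,v}]$ restricts to a regular function on each open set, hence lies in the intersection. For the reverse inclusion I would run the standard codimension-two argument, which is legitimate here because $G^{u,v}$ is a finite-dimensional smooth, in particular normal, affine variety by \cref{prop:fdimdbc}. The frozen minors $\De_{u,e}^{\om_j}$ and $\De_{e,v^{-1}}^{\om_j}$ never vanish on $G^{u,v}$ by \cref{lem:closure}, so $G^{u,v}\setminus U_{\Si_{\mb{i}}}=\bigcup_{k\notin I_0}\{A_k=0\}$; and since $A_k$ and $A_k'$ are distinct generalized minors, hence relatively prime by \cref{prop:prime}, a generic point of each divisor $\{A_k=0\}$ lies in $U_{\mu_k(\Si_{\mb{i}})}$. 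Thus $G^{u,v}\setminus U$ has codimension at least two, every function regular on $U$ extends to $G^{u,v}$, and the reverse inclusion follows. This yields $\BC[G^{u,v}]=\ol{\CA}(\Si_{\mb{i}})=\BC[\CA_{|\Si_{\mb{i}}|}]$, and the theorem.

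I expect the main obstacle to be the verification in the second step that the determinantal identity of \cref{prop:gendetid} reproduces precisely the exchange relation \cref{eq:Atrans} attached to the exchange matrix of \cref{def:wordseed}: matching the two sides term-by-term requires a careful analysis of the indices $k^\pm$ and the signs $\ep_k$, especially at the interface between the positive and negative parts of the word and at frozen neighbours. Everything else—the coordinate identification, the full-rank input, and the codimension-two extension—reduces to results already assembled, the last being unproblematic precisely because the double Bruhat cell, though sitting inside an infinite-dimensional group, is itself a finite-dimensional smooth affine variety.
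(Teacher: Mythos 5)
Your architecture is the same as the paper's (initial torus immersion from \cref{thm:main}, one-step exchange relations, full rank plus \cref{prop:uppercluster}, codimension-two extension), but two of the steps you treat as routine are precisely where the real work lies, and as written both contain non sequiturs.

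First, the mutated variable $A_k'$ is \emph{not} in general a generalized minor, so \cref{prop:gendetid} does not directly present $A_kA_k'$ as a two-term sum of cluster monomials, and \cref{prop:prime} cannot be quoted to make $A_k'$ coprime to $A_k$. That happens only in special configurations (essentially when the mutation realizes a transposition of the double reduced word, as in \cref{lem:switch}). In general one must \emph{prove} that the rational function $A_k' = A_k^{-1}(\prod_{b_{kj}>0}A_j^{b_{kj}} + \prod_{b_{kj}<0}A_j^{-b_{kj}})$ is regular; the paper does this by reducing to $k=1$, $k^+=m$, writing explicit formulas for $A_1'$ in four cases, reducing further to unmixed words, and then arguing that a product of principal minors divides a certain combination of minors using \cref{prop:gendetid} together with the primality statement of \cref{prop:prime} — followed by an induction over transpositions of adjacent letters of $\mb{i}$ to remove the unmixedness hypothesis. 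Relatedly, your codimension-two step needs $A_k'$ coprime to $A_k$; the paper gets this by showing each unfrozen $A_k$ is irreducible in $\BC[G^{u,v}]$ and that $A_k'$ is an irreducible element times a monomial in the $A_j$, $j\neq k$, not by citing \cref{prop:prime} for $A_k'$.

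Second, and more seriously, "each mutated cluster consists of regular functions forming a coordinate system, so $a$ extends to an open immersion $a_{\mu_k(\Si_{\mb{i}})}$" does not follow. Knowing that $A_k'$ and the $A_j$, $j\neq k$, are regular on $G^{u,v}$ gives a regular map \emph{from} an open subset of $G^{u,v}$ \emph{to} the torus $\CA_{\mu_k(\Si_{\mb{i}})}$; it does not show that the inverse birational map $\CA_{\mu_k(\Si_{\mb{i}})}\to G^{u,v}$ is regular, i.e.\ that points of the mutated torus where $\prod_{b_{kj}>0}A_j^{b_{kj}} + \prod_{b_{kj}<0}A_j^{-b_{kj}}=0$ still land in $G^{u,v}$ rather than in the boundary of $\ol{G^{u,v}}$. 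This is exactly the containment $\BC[G^{u,v}]\subseteq\BC[\CA_{\mu_k(\Si_{\mb{i}})}]$ that your upper-bound intersection requires, so it cannot be skipped. The paper establishes it indirectly: it first proves the analogous regularity for the $\CX$-chart (\cref{prop:x_k}), which requires the Kac-Moody-specific \cref{lem:regularconj} (the finite-type argument of Zelevinsky fails because of imaginary root spaces) and \cref{lem:closure} to see the image stays in the cell, and then transfers the statement to the $\CA$-side via the finite covers $p'_M$, $p_G$ and an integral-closure/normality argument. Your proposal never supplies an argument for this step.
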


When $G$ is a semisimple algebraic group, this is the content of \cite[2.10]{Berenstein2005}.  As in loc.\ cited, the proof we give is modelled on that of a closely related result in \cite{Zelevinsky2000}, which treats the case of reduced double Bruhat cells.  Most of the work is delegated to a series of lemmas that take up the bulk of the section; first we show how these lemmas assemble into the proof of \cref{thm:clusterthm}.

\begin{proof}[Proof of \cref{thm:clusterthm}]
By \cref{lem:fullrank}, \cref{prop:uppercluster} applies to $\Si_{\mb{i}}$, hence
\[
\BC[\CA_{|\Si_{\mb{i}}|}] = \BC[\CA_{\Si_{\mb{i}}}] \cap \bigcap_{k \in I \setminus I_0} \BC[\CA_k].
\]
On the other hand, by \cref{lem:codim}, the maps $a_{\Si_{\mb{i}}}: \CA_{\Si_{\mb{i}}} \to G^{u,v}$, $a_k: \CA_k \to G^{u,v}$ induce an isomorphism
\[
\BC[G^{u,v}] \cong \BC[\CA_{\Si_{\mb{i}}}] \cap \bigcap_{k \in I\setminus I_0} \BC[\CA_k].
\]
Then since $G^{u,v}$ is an affine variety (\cref{prop:fdimdbc}), we have $G^{u,v} \cong \Spec \BC[\CA_{|\Si_{\mb{i}}|}]$.  But then $a_{|\Si_{\mb{i}}|}$ is just the canonical map $\CA_{|\Si_{\mb{i}}|} \to \Spec \BC[\CA_{|\Si_{\mb{i}}|}]$.
\end{proof}

\begin{lemma} \label{lem:fullrank}
The submatrix of $B$ formed by its unfrozen rows has full rank.
\end{lemma}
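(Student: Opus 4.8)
The plan is to deduce this from the nondegeneracy of the full square matrix $\wt{B} = B + M$ of \cref{prop:XtoA}. The first observation is that the unfrozen rows of $B$ are literally equal to the corresponding rows of $\wt{B}$. Indeed, an index $j$ is unfrozen exactly when $j>0$ and $j^+ \leq m$, and the formula for $M$ shows that $M_{jk}$ can be nonzero only when either $j^+,k^+>m$ or $j,k<0$; both possibilities are excluded once $j$ is unfrozen, so $M_{jk}=0$ for every $k$ and hence $B_{jk}=\wt{B}_{jk}$. Thus if I can show that $\wt{B}$ is a nondegenerate $I \times I$ matrix, then all of its rows are linearly independent, and in particular the subcollection indexed by the unfrozen indices is; this is exactly the assertion of the lemma.

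To prove $\det \wt{B} \neq 0$, I would invoke \cref{prop:XtoA}, which gives $p_G^*(X_j) = \prod_{k \in I} A_k^{\wt{B}_{jk}}$ for $p_G = \pi_G \circ (\io \circ \ze^{u,v})$. The key point is that the functions $p_G^*(X_j)$, $j \in I$, are algebraically independent in $\BC(G^{u,v})$. This holds because the $X_j$ are coordinates on the dense open subset $\CX_{\mb{i}} \subset G^{u,v}_{\Ad}$ (\cref{defn:Xdef}, \cref{prop:fincov}), hence a transcendence basis of $\BC(G^{u,v}_{\Ad})$, while the pullback $p_G^*$ is an embedding of function fields: $\io \circ \ze^{u,v}$ is a biregular automorphism of $G^{u,v}$, and $\pi_G$ restricts to a finite surjection $G^{u,v} \to G^{u,v}_{\Ad}$ (it is the quotient by the finite central subgroup defining $G_{\Ad}$, which preserves $G^{u,v}$ since it lies in $H \subset B_\pm$), so $p_G$ is dominant. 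A field embedding preserves algebraic independence, so the $p_G^*(X_j)$ are indeed algebraically independent.

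Finally, suppose for contradiction that $\det \wt{B}=0$. Then there is a nonzero integer vector $(c_j)_{j\in I}$ with $\sum_j c_j \wt{B}_{jk}=0$ for all $k$, and the formula of \cref{prop:XtoA} gives $\prod_j (p_G^*X_j)^{c_j} = \prod_k A_k^{\sum_j c_j \wt{B}_{jk}} = 1$. Separating $(c_j)$ into its positive and negative parts rewrites this multiplicative identity as an equality of two distinct monomials in the $p_G^*(X_j)$, i.e. a nontrivial polynomial relation among them, contradicting their algebraic independence. Hence $\wt{B}$ is nondegenerate and the lemma follows. I expect the only genuinely delicate point to be the justification that $p_G$ is dominant — concretely, that $\pi_G$ carries $G^{u,v}$ onto $G^{u,v}_{\Ad}$ with finite fibers; everything after that is formal, and one should note for safety that none of \cref{prop:fincov} or \cref{prop:XtoA} depends on the present lemma, so the argument is not circular.
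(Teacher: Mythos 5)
Your proof is correct, but it takes a genuinely different route from the paper's. The paper argues purely combinatorially: it exhibits the square submatrix of $B$ whose rows are the unfrozen indices and whose columns are indexed by $I_+ = \{k : k^- \in I\setminus I_0\}$, observes from \cref{def:wordseed} that this submatrix is triangular with diagonal entries $b_{k,k^+} = \pm 1$ and vanishing entries above the diagonal, and concludes full rank directly. You instead reduce to the nondegeneracy of $\wt{B} = B+M$, correctly noting that $M$ vanishes on unfrozen rows, and then derive $\det\wt{B}\neq 0$ from \cref{prop:XtoA} together with the algebraic independence of the $p_G^*(X_j)$ (which holds since $p_G$ is the composition of a biregular automorphism of $G^{u,v}$ with the finite surjection $\pi_G$, hence dominant). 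Your checks are all sound — including the non-circularity, since \cref{prop:fincov} and \cref{prop:XtoA} are independent of this lemma — and in fact the paper itself uses exactly your reasoning later, in the proof of \cref{prop:x_k}, where it asserts that ``the determinant $D$ of the matrix $\wt{B}$ is a nonzero integer'' because $p_G$ is a finite covering. The trade-off: the paper's argument is elementary and self-contained, needing only the definition of $B$, whereas yours leans on the full strength of \cref{thm:main} and \cref{prop:XtoA} but yields the stronger and independently useful fact that $\wt{B}$ is nondegenerate.
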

\begin{proof}
First let 
\[
I_+ = \{ k \in I : k^- \in I \setminus I_0 \}.
\]
We claim the submatrix of $B$ whose rows are those indexed by $I \setminus I_0$ and whose columns are indexed by $I_+$ is lower triangular with nonzero diagonal entries.  The diagonal entries are of the form $b_{k,k^+}$, hence equal to $\pm 1$ by \cref{def:wordseed}.  On the other hand if an entry $b_{k,\ell}$ of this submatrix lies above the diagonal then $\ell > k^+$.   Again, from the definition of $B$ we must have $b_{k,\ell} = 0$.  Thus this square submatrix has full rank, and it follows that the matrix formed by the unfrozen rows has full rank.
\end{proof}

\begin{lemma}
For each unfrozen index $k \in I$, let $A'_k$ be the rational function on $G^{u,v}$ obtained from the exchange relation
\[
A_k' = A_k^{-1}\biggl(\prod_{b_{kj}>0}A_j^{b_{kj}} + \prod_{b_{kj}<0}A_j^{-b_{kj}}\biggr).
\]
Then $A'_k$ is in fact regular.
\end{lemma}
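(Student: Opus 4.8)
The plan is to prove the stronger statement that $A_k'$ coincides with a single generalized minor, from which its regularity is immediate. The organizing principle is that the three-term identity \cref{prop:gendetid} is precisely the universal cluster exchange relation written out for minors, so the whole task is to match the abstract relation
\[
A_k A_k' = \prod_{b_{kj}>0}A_j^{b_{kj}} + \prod_{b_{kj}<0}A_j^{-b_{kj}}
\]
with \cref{prop:gendetid}. Fixing an unfrozen index $k$ and writing $i = |i_k|$, I would first note that the right-hand side $P$ is a regular function on $G^{u,v}$ and that $A_k' = P/A_k$; since $G^{u,v}$ is a smooth, hence normal, affine variety by \cref{prop:factorization,prop:fdimdbc}, it suffices to exhibit the generalized minor $A_k$ as a factor of $P$, which simultaneously identifies $A_k'$ explicitly.

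The heart of the argument is a dictionary between the $A_\ell$ and the minors occurring in \cref{prop:gendetid}. The key input is that the truncated minor $\De^{\om_j}_{u_{\leq \ell},v_{>\ell}}$ is constant as $\ell$ runs over the indices strictly between two consecutive occurrences of the weight $j$; hence each minor appearing in \cref{prop:gendetid} equals one of the $A_\ell$. The entries of \cref{def:wordseed} were designed to encode exactly these identifications: the two $\pm1$ entries $b_{k,k^-} = \ep_k$ and $b_{k,k^+} = -\ep_{k^+}$ supply the neighbouring weight-$i$ minors, while the off-diagonal entries with $|i_\ell| = j \neq i$ reproduce the exponents $-C_{ji}$ attached to the minors $\De^{\om_j}_{u,v}$. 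Once the two monomials on the right are recognized as $\De^{\om_i}_{us_i,v}\De^{\om_i}_{u,vs_i}$ and $\prod_{j\neq i}(\De^{\om_j}_{u,v})^{-C_{ji}}$ for the appropriate $u,v$, \cref{prop:gendetid} collapses $P$ to $\De^{\om_i}_{u,v}\De^{\om_i}_{us_i,vs_i}$, one of whose factors is $A_k$; the complementary factor is then the generalized minor equal to $A_k'$.

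The main obstacle is the sign bookkeeping carried by the $\ep$'s. Because $b_{k,k^-}$ and $b_{k,k^+}$ need not have the same sign, the two neighbouring minors $A_{k^-},A_{k^+}$ do not always fall into the same monomial, and correspondingly $A_k$ may play the role of either $\De^{\om_i}_{u,v}$ or $\De^{\om_i}_{us_i,vs_i}$ in \cref{prop:gendetid}, with $A_k'$ the complementary minor. This forces a case analysis on the signs of $i_k$ and of its chain-neighbours, which I would organize (as in the proof of \cref{prop:x_k}) into a handful of cases reduced by the $u\leftrightarrow v$ symmetry and the involution $\io$. A secondary subtlety is the boundary behaviour when a neighbour is frozen, i.e.\ when $k^- = -i$ is the initial negative index of its chain or $k^+ > m$: here one of the frozen minors $\De^{\om_i}_{u,e}$ or $\De^{\om_i}_{e,v^{-1}}$ enters the relation, and one must check that the match with \cref{prop:gendetid} persists. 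Each such case can be reduced, exactly as in \cref{lem:switch}, to a single application of \cref{prop:gendetid} after transposing the relevant pair of adjacent opposite-sign letters, which isolates the three-term identity and completes the verification that $A_k'$ is a generalized minor and hence regular.
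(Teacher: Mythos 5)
Your proposal rests on a claim that is false in general: that $A_k'$ is itself a single generalized minor, with the exchange relation being one instance of \cref{prop:gendetid}. For the exchange relation to match \cref{prop:gendetid} literally, one of the two exchange monomials would have to be a product of exactly two weight-$i$ minors and the other would have to be $\prod_{j\neq i}(\De^{\om_j}_{u,v})^{-C_{ji}}$ with \emph{all} factors taken at one common pair $(u,v)$. But the monomial $\prod_{b_{k\ell}<0}A_\ell^{-b_{k\ell}}$ (and likewise the other one) involves minors $\De^{\om_{|i_\ell|}}_{u_{\leq \ell},v_{>\ell}}$ sitting at \emph{different} positions $\ell$ scattered between $k^-$ and $k^+$, hence at different Weyl group pairs; moreover both monomials generically contain factors with the exponents $-C_{ji}$, not just one of them (see, e.g., the paper's Case 1 formula, where $\De^{\om_i}_{e,v^{-1}}\prod_{\ell\in S_+}(\De^{\om_\ell}_{e,e})^{-C_{\ell i}}$ and $\De^{\om_i}_{e,e}\prod_{\ell\in S_+}(\De^{\om_\ell}_{e,v^{-1}})^{-C_{\ell i}}$ appear on the two sides of the plus sign). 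Consequently the relation does not collapse under a single application of \cref{prop:gendetid}, and $A_k'$ is in general \emph{not} a generalized minor --- already for semisimple $G$ it is well known that the cluster variables adjacent to the Berenstein--Fomin--Zelevinsky initial seed need not be minors. Relatedly, your appeal to \cref{lem:switch} misreads its scope: that lemma concerns the case where $i_k$ and $i_{k+1}$ are adjacent opposite-sign letters with $k^+=k+1$, which is the only situation in which the mutation at $k$ is realized by a transposition of the double reduced word; a general unfrozen $k$ has $k^+$ far from $k$, and no change of word produces $\mu_k(\Si_{\mb i})$.

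The paper's actual proof has an extra mechanism that your outline is missing and cannot do without: after writing the exchange relation explicitly (in four cases by the signs of $i_1,i_m$, having reduced to $k=1$, $k^+=m$ and to an unmixed arrangement of the intermediate letters), it \emph{multiplies both sides by an auxiliary monomial} in principal/frozen minors so as to complete the $(\De^{\om_j})^{-C_{ji}}$ factors to the full product appearing in \cref{prop:gendetid}, applies that identity, and then \emph{divides back out} using the fact that $\BC[G]$ is a UFD in which distinct generalized minors are relatively prime primes (\cref{prop:prime}). The conclusion is that $A_k'$ is a ratio of a difference of products of minors by a monomial in minors --- regular, but not a minor. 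Your proposal never invokes \cref{prop:prime} or any divisibility argument in a UFD, and the normality of $G^{u,v}$ alone does not substitute for it. To repair the argument you would essentially have to reproduce the paper's completion-and-division step, together with the separate induction on transpositions of the word that removes the unmixedness assumption.
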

\begin{proof}
It suffices to consider the case $k=1$, $k^+=m$, where we will in fact show that $A'_1$ is the restriction to $G^{u,v}$ of a strongly regular function on $G$. In the general case, consider the double reduced word $\mb{i}' = (i_k,\dots,i_{k^+})$.  Then one has
\[
A'_{k,\mb{i}}(x) = A'_{1,\mb{i}'}(\ol{u_{<k}}^{-1}x\ol{v_{>k^+}}),
\]
hence $A'_{k,\mb{i}}$ is the restriction of a strongly regular function if $A'_{1,\mb{i}'}$ is.

We obtain the following formulas for $A_1'$ depending on the signs of $i_1$ and $i_m$.  We will let $E_\pm = \{1<j<m|\ep_j = \pm 1 \}$, $J_\pm = \{|i_j| | 1\leq j < m, j_- < 0 \}$, and $i := |i_1| = |i_m|$.
\begin{description}
\item[\textbf{Case 1, $i_1 = i_m = i$}] \hfill \\
\[
A_1'\De_{e,s_i}^{\om_i} = \De_{e,v^{-1}}^{\om_i}\prod_{\substack{k \in E_+ \\ k^+ \nin E_+}}(\De_{u_{\leq k}, v_{>k}}^{\om_{|i_k|}})^{-C_{|i_k|,i}}
	+ \De_{e,e}^{\om_i}\prod_{\substack{k \in E_+ \\ k^- \nin E_+}}(\De_{u_{<k},v_{\geq k}}^{\om_{|i_k|}})^{-C_{|i_k|,i}}
\]
\item[\textbf{Case 2, $i_1 = i_m = -i$}] \hfill \\
\[
A_1'\De_{s_i,e}^{\om_i} = \De_{u,e}^{\om_i}\prod_{\substack{k \in E_- \\ k^- \nin E_-}}(\De_{u_{< k}, v_{>k}}^{\om_{|i_k|}})^{-C_{|i_k|,i}}
	+ \De_{e,e}^{\om_i}\prod_{\substack{k \in E_- \\ k^+ \nin E_-}}(\De_{u_{\leq k},v_{> k}}^{\om_{|i_k|}})^{-C_{|i_k|,i}}
\]
\item[\textbf{Case 3, $i_1 = i$, $i_m = -i$}] \hfill \\
\[
A_1'\De_{e,e}^{\om_i} = \De_{e,v^{-1}}^{\om_i}\De_{u,e}^{\om_i}\prod_{\substack{k \in E_+ \\ k^+ \in E_-}}(\De_{u_{\leq k}, v_{>k}}^{\om_{|i_k|}})^{-C_{|i_k|,i}}
	+ \Biggl(\prod_{\substack{k \in E_- \\ k^- \nin E_-}}(\De_{u_{\leq k},v_{> k}}^{\om_{|i_k|}})^{-C_{|i_k|,i}}\Biggr)	\Biggl(\prod_{j \in [1,\wt{r}] \setminus J_-}(\De_{e,v^{-1}}^{\om_j})^{-C_{ij}}\Biggr)
\]
\item[\textbf{Case 4, $i_1 = -i$, $i_m = i$}] \hfill \\
\[
A_1'\De_{s_i,s_i}^{\om_i} = \De_{e,s_i}^{\om_i}\De_{s_i,e}^{\om_i}\prod_{\substack{k \in E_- \\ k^+ \in E_+}}(\De_{u_{\leq k}, v_{>k}}^{\om_{|i_k|}})^{-C_{|i_k|,i}}
	+ \Biggl(\prod_{\substack{k \in E_+ \\ k^+ \nin E_+}}(\De_{u_{\leq k},v_{> k}}^{\om_{|i_k|}})^{-C_{|i_k|,i}}\Biggr)
	\Biggl(\prod_{j \in [1,\wt{r}] \setminus J_+}(\De_{e,v^{-1}}^{\om_j})^{-C_{ij}}\Biggr)
\]
\end{description}

We now impose the further assumption that $j < k$ for all $j \in E_+$, $k \in E_-$, before returning to the general case.  Letting $S_\pm = \{|i_k| : k \in E_\pm \} \subset [1,\wt{r}]$, we can then simplify the above formulas as:

\begin{description}
\item[\textbf{Case 1, $i_1 = i_m = i$}] \hfill \\
\[
A_1'\De_{e,s_i}^{\om_i} = \De_{e,v^{-1}}^{\om_i}\prod_{\ell \in S_+}(\De_{e,e}^{\om_\ell})^{-C_{\ell i}}
	+ \De_{e,e}^{\om_i}\prod_{\ell \in S_+}(\De_{e,v^{-1}}^{\om_\ell})^{-C_{\ell i}}
\]
\item[\textbf{Case 2, $i_1 = i_m = -i$}] \hfill \\
\[
A_1'\De_{s_i,e}^{\om_i} = \De_{u,e}^{\om_i}\prod_{\ell \in S_-}(\De_{e,e}^{\om_\ell})^{-C_{\ell i}}
	+ \De_{e,e}^{\om_i}\prod_{\ell \in S_-}(\De_{u,e}^{\om_\ell})^{-C_{\ell i}}
\]
\item[\textbf{Case 3, $i_1 = i$, $i_m = -i$}] \hfill \\
\[
A_1'\De_{e,e}^{\om_i} = \De_{e,v^{-1}}^{\om_i}\De_{u,e}^{\om_i}\prod_{\ell \in S_+ \cap S_-}(\De_{e,e}^{\om_\ell})^{-C_{\ell i}}
	+ \Biggl(\prod_{\ell \in S_-}(\De_{u,e}^{\om_\ell})^{-C_{\ell i}}\Biggr)
	\Biggl(\prod_{\ell \in ([1,\wt{r}] \setminus S_-) \cup S_+}(\De_{e,v^{-1}}^{\om_\ell})^{-C_{\ell i}}\Biggr)
\]
\item[\textbf{Case 4, $i_1 = -i$, $i_m = i$}] \hfill \\
\[
A_1'\De_{s_i,s_i}^{\om_i} = \De_{e,s_i}^{\om_i}\De_{s_i,e}^{\om_i}
	+ \Biggl(\prod_{\ell \in [1,\wt{r}] \setminus \{i\}}(\De_{e,e}^{\om_\ell})^{-C_{\ell i}}\Biggr)
\]
\end{description}
In each case, one can apply \cref{prop:gendetid} to deduce that $A'_1$ is indeed regular.  For example, in case 1, multiplying both sides of the above equation by
\[
\prod_{j \in [1,\wt{r}] \setminus (\{i\} \cup S_+)}(\De_{e,e}^{\om_j})^{-C_{ji}}
= \prod_{j \in [1,\wt{r}] \setminus (\{i\} \cup S_+)}(\De_{e,v^{-1}}^{\om_j})^{-C_{ji}}
\]
we obtain
\begin{align*}
& A_1'\De_{e,s_i}^{\om_i}\Biggl( \prod_{j \in [1,\wt{r}] \setminus (\{i\} \cup S_+)}(\De_{e,e}^{\om_j})^{-C_{ji}} \Biggr)  & \\
& \quad = \De_{e,v^{-1}}^{\om_i}\prod_{\ell \in [1,\wt{r}]\setminus\{i\}}(\De_{e,e}^{\om_\ell})^{-C_{\ell i}}
	+ \De_{e,e}^{\om_i}\prod_{\ell \in [1,\wt{r}]\setminus\{i\}}(\De_{e,v^{-1}}^{\om_\ell})^{-C_{\ell i}} & \\
& \quad	= \De_{e,v^{-1}}^{\om_i}(\De_{e,e}^{\om_i} \De_{s_i,s_i}^{\om_i} - \De_{e,s_i}^{\om_i} \De_{s_i,e}^{\om_i})
	+ \De_{e,e}^{\om_i}(\De_{e,s_i}^{\om_i} \De_{s_i,v^{-1}}^{\om_i} - \De_{s_i,s_i}^{\om_i} \De_{e,v^{-1}}^{\om_i}) & \\
& \quad	= \De_{e,s_i}^{\om_i}(\De_{e,e}^{\om_i} \De_{s_i,v^{-1}}^{\om_i} - \De_{s_i,e}^{\om_i} \De_{e,v^{-1}}^{\om_i}). & 
\end{align*}
By \cref{prop:prime}, $\De_{e,s_i}^{\om_i}$ is a prime element of $\BC[G]$ distinct from the $\De_{e,e}^{\om_j}$ for $j \neq i$, hence $\prod_{j \in [1,\wt{r}] \setminus (\{i\} \cup S_+)}(\De_{e,e}^{\om_j})^{-C_{ji}}$ must divide $(\De_{e,e}^{\om_i} \De_{s_i,v^{-1}}^{\om_i} - \De_{s_i,e}^{\om_i}\De_{e,v^{-1}}^{\om_i})$ in $\BC[G]$.  But then 
\[
A'_1 = (\De_{e,e}^{\om_i} \De_{s_i,v^{-1}}^{\om_i} - \De_{s_i,e}^{\om_i}\De_{e,v^{-1}}^{\om_i})/\Biggl(\prod_{j \in [1,\wt{r}] \setminus (\{i\} \cup S_+)}(\De_{e,e}^{\om_j})^{-C_{ji}}\Biggr)
\]
is indeed an element of $\BC[G]$.  We omit the remaining cases, which may be dealt with using the same strategy.

Now suppose $\mb{i}$ and $\mb{i}'$ are two double reduced word differing only in that $i_k = i'_{k+1} = j$ and $i_{k+1} = i'_k = -j'$ for some $1 \leq k < m$ and $1 \leq j,j'\leq r$.  We claim that if $A'_{1,\mb{i}}$ is regular, so is $A'_{1,\mb{i}'}$.  This is straightforward unless $j = j'$ and $C_{ji} \neq 0$, so we restrict our attention to this case.  The argument in each of the above cases is essentially the same, so we will only consider Case 1 in detail.

Let $P_1$ and $P_2$ ($P'_1$ and $P'_2$) be the two monomials appearing in the right-hand side of the exchange relation defining $A'_{1,\mb{i}}$ ($A'_{1,\mb{i}'}$).  We must show that ${\De^{\om_i}_{e,s_i}}$ divides $P'_1 + P'_2$ in $\BC[G^{u,v}]$ given that it divides $P_1 + P_2$. 

If $u' = u_{\leq k}$, $v' = v_{>k}$, one can check that
\[
P'_1 + P'_2 = \frac{\left( P_1(\De^{\om_j}_{u',v's_j}\De^{\om_j}_{u's_j,v'})^{-C_{ji}} + P_2 (\De^{\om_j}_{u',v'}\De^{\om_j}_{u's_j,v's_j})^{-C_{ji}}  \right)}{((\De^{\om_j}_{u',v's_j})^{[k^- \nin E_+]}(\De^{\om_j}_{u's_j,v'})^{[k^++ \in E_+]} \De^{\om_j}_{u',v'})^{-C_{ji}}}.
\]
Here, e.g., $[k^- \in E_+]$ is the function which is 1 if $k^- \in E_+$, and 0 otherwise.  By \cref{prop:prime}, ${\De^{\om_i}_{e,s_i}}$ and the denominator of the right-hand side are relatively prime, so it suffices to show that ${\De^{\om_i}_{e,s_i}}$ divides the numerator.  This in turn is equivalent to showing that ${\De^{\om_i}_{e,s_i}}$ divides 
\[
(\De^{\om_j}_{u',v's_j}\De^{\om_j}_{u's_j,v'})^{-C_{ji}} - (\De^{\om_j}_{u',v'}\De^{\om_j}_{u's_j,v's_j})^{-C_{ji}},
\]
or simply that it divides 
\[
\De^{\om_j}_{u',v's_j}\De^{\om_j}_{u's_j,v'} - \De^{\om_j}_{u',v'}\De^{\om_j}_{u's_j,v's_j}.
\]
But since ${\De^{\om_i}_{e,s_i}} = {\De^{\om_i}_{u',v'}}$, this follows from \cref{prop:gendetid}.
\end{proof}

\begin{lemma}\label{lem:mutiso}
There is an open immersion $a_{\Si_{\mb{i}}}: \CA_{\Si_{\mb{i}}} \to G^{u,v}$ such that the generalized minors $A_i$ from \cref{defn:Aminors} pull back to the corresponding cluster variables on $\CA_{\Si_{\mb{i}}}$.  If $k \in I \setminus I_0$ is any unfrozen index and $\CA_k := \CA_{\mu_k(\Si_{\mb{i}})}$, then there is also an open immersion $a_k: \CA_k \to G^{u,v}$ forming a commutative diagram


\vspace{-2mm}
\[
\begin{tikzcd}
\CA_{\Si_{\mb{i}}} \arrow[dashed]{rr}{\mu_k} \arrow{dr}[swap]{a_{\Si_{\mb{i}}}} && \CA_k \arrow{dl}{a_k} \\
& G^{u,v}. & 
\end{tikzcd}
\]
\noindent In particular, the regular functions $\{A_i |  i \in I, i \neq k \} \cup \{A'_k\} \subset \BC[G^{u,v}]$ pull back to the corresponding cluster variables on $\CA_k$.
\end{lemma}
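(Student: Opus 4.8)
The plan is to build $a_{\Si_{\mb i}}$ directly from the parametrization $x_{\mb i}$ of \cref{prop:factorization} together with \cref{thm:main}, and then to obtain $a_k$ by transporting $a_{\Si_{\mb i}}$ along the cluster transformation $\mu_k$, the one delicate point being regularity across the mutation locus. Write $\phi=\ze^{u^{-1},v^{-1}}\circ\io$ for the automorphism of $G^{u,v}$ occurring in \cref{thm:main} and put $U=\phi(x_{\mb i}(T_{\mb i}))$, a dense open torus chart with $\CO(U)\cong\BC[t_1^{\pm1},\dots,t_{m+\wt r}^{\pm1}]$. By \cref{thm:main} the minors $g_k:=A_{k,\mb i}|_U$ satisfy $\prod_{k\in I}g_k^{\Theta_{jk}}=t_j$ in $\CO(U)$, where $\Theta$ is the integer exponent matrix of \cref{eq:t_k,eq:aom}. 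Since the $t_j$ are algebraically independent, $\Theta$ is nonsingular, so raising these relations to the powers given by $|\det\Theta|\,\Theta^{-1}$ shows each $g_k^{|\det\Theta|}$ is a Laurent monomial, hence a unit of the UFD $\CO(U)$; therefore $g_k$ is itself a unit, i.e. a scalar times a Laurent monomial in the $t_j$. Matching monomial and constant parts forces $\Theta^{-1}$ to be integral and all scalars to be $1$. Thus the minor map $U\to\CA_{\Si_{\mb i}}$, $x'\mapsto(A_{k,\mb i}(x'))_{k\in I}$, is an isomorphism of tori, and I define $a_{\Si_{\mb i}}$ to be its inverse: it is an open immersion with $a_{\Si_{\mb i}}^*A_{k,\mb i}=A_k$ for every $k$, all minors are nonvanishing on its image $U$, and restriction gives an inclusion $\BC[G^{u,v}]\hookrightarrow\BC[\CA_{\Si_{\mb i}}]$.

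For the mutated chart set $a_k:=a_{\Si_{\mb i}}\circ\mu_k^{-1}$, so the triangle commutes tautologically as rational maps. Pulling back the minors gives $a_k^*A_{i,\mb i}=A'_i$ for $i\neq k$ and $a_k^*A_{k,\mb i}=Q/A'_k$, where $Q=\prod_{b_{kj}>0}A_j^{\,b_{kj}}+\prod_{b_{kj}<0}A_j^{\,-b_{kj}}$ is the exchange binomial rewritten in the coordinates $A'_j$ of $\CA_k$; since $A'_k$ is a unit on the torus $\CA_k$, all of these are regular on $\CA_k$, and likewise $a_k^*A'_k=A'_k$. Read off this way, the pullback identities are exactly the final assertion of the lemma once $a_k$ is known to be regular. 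The obstacle is precisely this regularity: elements of $\BC[G^{u,v}]$ are only Laurent polynomials in the minors, so a priori $a_k^*f$ could acquire a pole along the hypersurface $\{Q=0\}\subset\CA_k$ coming from negative powers of $A_{k,\mb i}$.

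This is where I would use the preceding lemma: $A'_k$ is regular on $G^{u,v}$ and satisfies $A_{k,\mb i}\cdot A'_k=Q$ there. Off $\{Q=0\}$ the map $\mu_k^{-1}$ is defined and $a_k$ is an open immersion into $U$; as $Q\to 0$ one has $a_k^*A_{k,\mb i}=Q/A'_k\to 0$ while the remaining minors and $A'_k$ stay finite and nonzero, so $a_k$ carries the generic point of $\{Q=0\}$ into the divisor $\{A_{k,\mb i}=0\}\subset G^{u,v}$ rather than toward the boundary of the affine variety $G^{u,v}$. Consequently the valuation $\nu$ centered at $\{Q=0\}$ transports under $a_k$ to a valuation of $\BC(G^{u,v})$ whose center lies on $G^{u,v}$, so $\nu(a_k^*f)\geq 0$ for every $f\in\BC[G^{u,v}]$; hence $a_k^*f$ is a rational function on the smooth variety $\CA_k$ with no pole along the codimension-one locus $\{Q=0\}$, thus regular outside codimension $\geq 2$ and therefore regular. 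Making the statement ``the wall maps into $\{A_{k,\mb i}=0\}$'' precise via the valuative criterion and normality is the technical heart of the argument, and is the step I expect to be the main obstacle.

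Finally, the regular functions $\{A_{i,\mb i}\}_{i\neq k}\cup\{A'_k\}$ on $G^{u,v}$ pull back to the coordinates of $\CA_k$, so they define a regular map $c_k$ on the open set $U_k$ where they are nonvanishing, and $a_k(\CA_k)\subseteq U_k$ with $c_k\circ a_k=\mathrm{id}_{\CA_k}$. Thus $a_k$ is an isomorphism onto the locally closed set $\{x\in U_k : a_k(c_k(x))=x\}$, which is dense because $a_k$ is birational, and a dense locally closed subset of the irreducible $G^{u,v}$ is open; hence $a_k$ is an open immersion. The construction of $a_{\Si_{\mb i}}$ and the pullback bookkeeping are then routine consequences of the twist automorphism and the exchange relation, with the regularity of $a_k$ across $\{Q=0\}$ being the sole substantive point.
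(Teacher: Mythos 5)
Your construction of $a_{\Si_{\mb{i}}}$ (inverting the monomial transformation of \cref{thm:main} via the adjugate to see that each $A_k$ restricts to a unit on the torus chart, forcing the exponent matrix to be unimodular) is fine and amounts to what the paper means by ``follows readily from \cref{prop:factorization,thm:main}''; likewise your reduction of the whole lemma to the single claim that $a_k^*f$ has no pole along the divisor $\{Q=0\}\subset\CA_k$ is correct. But that claim — which you yourself flag as the technical heart — is not actually proved, and the justification you sketch is circular. The statement ``the valuation $\nu$ centered at $\{Q=0\}$ transports to a valuation of $\BC(G^{u,v})$ whose center lies on $G^{u,v}$'' is, by definition, the statement that $\nu(f)\geq 0$ for \emph{every} $f\in\BC[G^{u,v}]$, i.e.\ exactly what you are trying to prove. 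Knowing that the finitely many functions $A_i$ ($i\neq k$), $A'_k$ stay finite and the frozen ones nonzero along $\nu$ does not yield this: the minors are not known to generate $\BC[G^{u,v}]$, nor to define a finite or proper map to affine space, so a valuation that is nonnegative on all of them can still have its center at infinity (or on a boundary stratum $G^{u',v'}$ of the closure). The nonvanishing-of-frozen-minors criterion of \cref{lem:closure} only distinguishes points of $G^{u,v}$ from points of $\ol{G^{u,v}}\setminus G^{u,v}$ \emph{once one already knows the limit lies in $\ol{G^{u,v}}\subset G$}, and producing that limit point is where the real work is.

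The paper supplies this missing input by a different route: it first proves \cref{prop:x_k}, that the mutated $\CX$-torus $\CX_k$ maps regularly into $G^{u,v}_{\Ad}$, by writing the mutated coweight parametrization out explicitly in the group and checking regularity factor by factor (the nontrivial cancellations being handled by \cref{lem:regularconj}, and membership in $G^{u,v}_{\Ad}$ by \cref{lem:closure}); it then transfers regularity from the $\CX$-level to the $\CA$-level using that $p'_M:\CA_k\to\CX_k$ and $p_G:G^{u,v}\to G^{u,v}_{\Ad}$ are finite and the sources normal, so that $\BC[\CA_k]$ and $\BC[G^{u,v}]$ are integral closures of $\BC[\CX_k]$ and $\BC[G^{u,v}_{\Ad}]$ in the common function field. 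To complete your argument you would need either this explicit construction of the limit point inside $G$ (or its $\CX$-space avatar) or some other a priori control on $\BC[G^{u,v}]$ along the wall; the valuative formalism alone does not provide it.
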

\begin{proof}
The existence of the stated map $a_{\Si_{\mb{i}}}$ follows readily from \cref{prop:factorization,thm:main}.  Moreover, $a_{\Si_{\mb{i}}}$ is birational, hence there is a unique rational map $a_k$ making the given diagram commute; we claim it is in fact regular.  

There is a commutative square


\vspace{-2mm}
\[
\begin{tikzcd}
\CA_k \arrow[dashed]{r}{a_k} \arrow[two heads]{d}{p'_M} & G^{u,v} \arrow[two heads]{d}{p_G} \\
\CX_k \arrow{r}{x_k} & G_{\Ad}^{u,v},
\end{tikzcd}
\]
\noindent where $x_k$ is the regular map defined in \cref{prop:x_k}.  Since $a_k$ is birational and the remaining maps are regular and dominant, the diagram embeds $\BC[\CX_k]$ and $\BC[G_{\Ad}^{u,v}]$ as subalgebras of the function field $\BC(\CA_k)$.  Moreover, we have $\BC[G_{\Ad}^{u,v}] \subset \BC[\CX_k]$ inside $\BC(\CA_k)$.  

Since $p'_M$ is finite and $\CA_k$ is normal, $\BC[\CA_k]$ is the integral closure of $\BC[\CX_k]$ in $\BC(\CA_k)$.  For the same reason, $\BC[G^{u,v}]$ is the integral closure of $\BC[G_{\Ad}^{u,v}]$ in $\BC(\CA_k)$.  But then the containment $\BC[G_{\Ad}^{u,v}] \subset \BC[\CX_k]$ inside $\BC(\CA_k)$ implies a containment $\BC[G^{u,v}] \subset \BC[\CA_k]$ of their integral closures, and it follows that $a_k$ is regular.  

It is clear from the construction that $a_k$ pulls back the regular functions $\{A_i | i \in I, i \neq k\} \cup \{A'_k\}$ on $G^{u,v}$ to the corresponding cluster variables on $\CA_k$.  It follows in particular that $a_k$ is injective.  But an injective birational morphism of smooth varieties is an open immersion, and the proposition follows.  
\end{proof}

\begin{lemma} \label{lem:codim}
Let $U \subset G^{u,v}$ be the open subset
\[
U := \CA_{\Si_{\mb{i}}} \cup \bigcup_{k \in I \setminus I_0} \CA_k,
\]
where we identify $\CA_{\Si_{\mb{i}}}$, $\CA_k:= \CA_{\mu_k(\Si_{\mb{i}})}$ with their images in $G^{u,v}$ following \cref{lem:mutiso}.  Then the complement of $U$ in $G^{u,v}$ has complex codimension greater than 1.
\end{lemma}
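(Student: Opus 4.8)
The plan is to describe the complement $G^{u,v}\setminus U$ explicitly as a union of vanishing loci of the minors $A_k$ and of their one-step mutations $A'_k$, and then to check that each piece has codimension at least $2$. Recall that $G^{u,v}$ is smooth (\cref{prop:fdimdbc}), that each $A_k$ and each $A'_k$ is a genuine element of $\BC[G]$ with $A_k$ prime by \cref{prop:prime}, and that by \cref{thm:main} and \cref{lem:mutiso} the open immersions identify $\CA_{\Si_{\mb{i}}}$ with the locus $\{x:A_i(x)\neq 0\ \forall i\in I\}$ and each $\CA_k$ with $\{x:A_i(x)\neq 0\ (i\neq k),\ A'_k(x)\neq 0\}$. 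Writing $Z_k:=\{A_k=0\}$ and $Z'_k:=\{A'_k=0\}$ inside $G^{u,v}$, the point is that a point lies outside $U$ precisely when some minor vanishes there yet it fails to lie in any adjacent torus.

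First I would record that the frozen minors never vanish on $G^{u,v}$. For $k<0$ one has $A_k=\De^{\om_{|k|}}_{e,v^{-1}}$, while for $k>0$ with $k^+>m$ the condition $k^+>m$ means no letter of $\mb{i}$ after position $k$ carries the weight $|i_k|$; since $\De^{\om_j}_{w,w'}$ depends only on the cosets $wW_{\hat j}$, $w'W_{\hat j}$, this forces $A_k=\De^{\om_{|i_k|}}_{u,e}$. In either case \cref{lem:closure} gives $Z_k=\emptyset$ for every frozen $k$. Consequently, if $x\in G^{u,v}\setminus U$ then some \emph{unfrozen} $A_i$ vanishes at $x$. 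If two distinct unfrozen minors vanish, then $x\in Z_i\cap Z_j$. If exactly one unfrozen minor $A_k$ vanishes, then for every $j\neq k$ the vanishing of $A_k$ (a coordinate of the torus $\CA_j$) already places $x$ outside $\CA_j$; since $x\notin U$ it must also lie outside $\CA_k$, forcing $A'_k(x)=0$, i.e. $x\in Z_k\cap Z'_k$. Therefore
\[
G^{u,v}\setminus U\ \subseteq\ \bigcup_{\substack{i\neq j\\ i,j\in I\setminus I_0}}(Z_i\cap Z_j)\ \cup\ \bigcup_{k\in I\setminus I_0}(Z_k\cap Z'_k),
\]
and it suffices to bound the codimension of each term on the right.

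For $Z_i\cap Z_j$ with $i\neq j$ this reduces to showing the unfrozen minors are pairwise non-associate in the UFD $\BC[G]$, so that $Z_i$ and $Z_j$ are transverse. By \cref{prop:prime} two members of the family are associate only if their $u$- and $v$-weights both agree; but distinct fundamental weights have disjoint Weyl orbits, and between two occurrences of a common weight at least one intervening letter $s_{|i_k|}$ strictly changes either $u_{\leq\bullet}\om$ or $v_{>\bullet}\om$ by a positive root, so distinct indices yield distinct primes and hence $\dim(Z_i\cap Z_j)\le\dim G^{u,v}-2$.

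The remaining, and main, point is that $A_k$ and $A'_k$ are coprime for each unfrozen $k$, which is where I expect the only real work to lie. Here I would argue geometrically on the adjacent torus: on $\CA_k$ the coordinate $A'_k$ and the $A_i$ $(i\neq k)$ are units, and the exchange relation gives $A_k=(A'_k)^{-1}(M_++M_-)$ with $M_\pm$ the two monomials in the $A_i$ $(i\neq k)$. Since the $k$-th row of $B$ is nonzero (indeed $b_{k,k^+}=\pm 1$, as in \cref{lem:fullrank}), $M_+$ and $M_-$ are distinct monomials, so $\{M_++M_-=0\}$ is a nonempty hypersurface of the torus $\CA_k$; any such point maps into $G^{u,v}$ to a point where $A_k=0$ but $A'_k\neq 0$. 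Thus $A_k\nmid A'_k$, so the prime $A_k$ shares no factor with $A'_k$ in $\BC[G]$ and $Z_k\cap Z'_k$ again has codimension $\ge 2$. Assembling the three estimates shows $G^{u,v}\setminus U$ is a finite union of sets of codimension greater than $1$, completing the proof; the only genuinely delicate step is this transversality of $Z_k$ and $Z'_k$, everything else being bookkeeping with \cref{lem:closure} and \cref{prop:prime}.
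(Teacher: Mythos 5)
Your reduction of the complement to the loci $Z_i\cap Z_j$ (two distinct unfrozen minors vanishing) and $Z_k\cap Z'_k$ is exactly the decomposition the paper uses, and your treatment of the frozen minors is fine. The gap is in the codimension estimates: you derive them from \cref{prop:prime}, i.e.\ from primality and coprimality in $\BC[G]$, but the relevant ring is $\BC[G^{u,v}]$. Since $G^{u,v}$ is a locally closed subvariety of $G$ of positive codimension, the restriction $\BC[G]\to\BC[G^{u,v}]$ factors through a quotient by the ideal of $\ol{G^{u,v}}$, and a prime of $\BC[G]$ can become reducible (or two coprime elements can acquire a common factor) after restriction: the common zero locus of two distinct primes has codimension two in $G$, but nothing prevents it from meeting $G^{u,v}$ in a divisor. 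So ``distinct primes in $\BC[G]$'' does not give $\dim(Z_i\cap Z_j)\le\dim G^{u,v}-2$, and the same problem undermines the final step for $Z_k\cap Z'_k$: your torus argument does show $Z_k\not\subseteq Z'_k$, which is a genuinely geometric statement inside $G^{u,v}$, but to conclude $\dim(Z_k\cap Z'_k)<\dim Z_k$ from it you need $Z_k$ to be irreducible, equivalently $A_k$ to be irreducible in $\BC[G^{u,v}]$ --- and that is not supplied by primality in $\BC[G]$.

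This missing irreducibility is precisely where the paper's proof does its work. It first shows that a Laurent monomial $\prod_k A_k^{n_k}$ in the initial minors is regular on $G^{u,v}$ if and only if $n_k\ge 0$ for every unfrozen $k$ (by expanding it on each adjacent chart $\CA_j$ via the exchange relation), deduces that in any factorization $A_k=PQ$ in $\BC[G^{u,v}]$ one factor is a monomial in the frozen variables and hence a unit, so the unfrozen $A_k$ are irreducible and pairwise non-associate in $\BC[G^{u,v}]$; a parallel argument isolates an irreducible factor $A''_k$ of $A'_k$ distinct from the $A_i$. Only then do the two-equation loci have codimension at least two. Your observation that the hypersurface $\{M_++M_-=0\}\subset\CA_k$ produces points with $A_k=0$, $A'_k\neq 0$ is a nice way to see $Z_k\not\subseteq Z'_k$, but you must first establish the cluster-theoretic irreducibility statement in $\BC[G^{u,v}]$ before any of the codimension bounds follow.
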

\begin{proof}
We first claim that the unfrozen generalized minors $A_k$ are distinct irreducible elements of $\BC[G^{u,v}]$, while the frozen ones are units.  If $k$ is frozen, either $k < 0$ or $k^+ = m+1$.  In the former case, $A_k = \De_{e,v^{-1}}^{\om_{|i_k|}}$, while in the latter $A_k = \De_{u,e}^{\om_{|i_k|}}$.  But in either case the fact that $A_k$ is nonvanishing on $G^{u,v}$ follows easily from the definition of the generalized minors.

Observe then that a Laurent monomial $M = \prod_{k \in I}A_k^{n_k}$ in the initial cluster variables is regular on $G^{u,v}$ if and only if $n_k \geq 0$ for all unfrozen $k$.  This follows from the definition of $A'_k$, since $M$ is regular on $\CA_k$ and hence expressible as a Laurent polynomial in $A'_k$ and the $A_i$ with $i \neq k$.  Suppose then that for some unfrozen index $k$ we can write $A_k$ as a product of two regular functions $P,Q \in \BC[G^{u,v}]$.  Clearly $P$ and $Q$ are themselves Laurent monomials in the $A_i$.  But since $PQ = A_k$, one of them must only involve frozen variables, hence is a unit in $\BC[G^{u,v}]$.  The fact that they are distinct is clear since their restrictions to $\CA_{\Si_{\mb{i}}}$ are distinct.  

We now claim that each $A'_k$ is the product of some irreducible element $A''_k \in \BC[G^{u,v}]$ and a Laurent monomial in the $A_i$ with $i \neq k$.  For suppose $P$ is an irreducible factor of $A'_k$.  Then $P$ must be expressible as a Laurent monomial in $A'_k$ and the $A_i$ with $i \neq k$, since it divides $A'_k$.  On the other hand, since $P$ is regular on $\CA_{\Si_{\mb{i}}}$, it follows from the definition of $A'_k$ that $A'_k$ appears with a nonnegative exponent in this monomial expression.  But then in the prime factorization of $A'_k$ there is exactly one irreducible factor such that this exponent is 1, and the statement follows.  Again, it is clear that this irreducible element $A''_k$ is distinct from the $A_i$ since their restrictions to $\CA_k$ are distinct.

Finally, we observe that the complement $G^{u,v} \setminus U$ is the locus where either $A_j$ and $A_k$ vanish for two distinct $j,k \in I$, or $A''_k$ and $A_k$ vanish for some $k \in I\setminus I_0$.  Let $x \in G^{u,v}$ be any element in the complement of $U$.  Since $x \nin \CA_{\Si_{\mb{i}}}$, $A_k(x)$ must equal zero for some $k \in I\setminus I_0$.  But $x \nin \CA_k$, so either $A''_k(x) = 0$ or $A_j(x) = 0$ for some $j \neq k$.  Thus $G^{u,v} \setminus U$ is the union of finitely many subvarieties cut out by two distinct irreducible equations, and the lemma follows.
\end{proof}

\begin{thm}\label{prop:mainX}
There is a regular map $x_{|\Si_{\mb{i}}|}: \CX_{|\Si_{\mb{i}}|} \to G_{\Ad}^{u,v}$ extending the map $\CX_{\Si_{\mb{i}}} \to G_{\Ad}^{u,v}$ of \cref{defn:Xdef}.    We have a commutative diagram


\vspace{-2mm}
\[
\begin{tikzcd}
\CA_{|\Si_{\mb{i}}|} \arrow{r}{a_{|\Si_{\mb{i}}|}} \arrow{d}{p_M} & G^{u,v} \arrow{d}{p_G} \\
\CX_{|\Si_{\mb{i}}|} \arrow{r}{x_{|\Si_{\mb{i}}|}} & G_{\Ad}^{u,v},
\end{tikzcd}
\]
\noindent where $p_M$ and $p_G$ are as defined in \cref{thm:ensthm}
\end{thm}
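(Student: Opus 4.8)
The plan is to build $x_{|\Si_{\mb{i}}|}$ chart by chart and glue, descending regular functions from $G^{u,v}$ through the finite monomial cover $p_M$. First I would check that the asserted square already commutes \emph{as rational maps}. On the open torus $\CX_{\Si_{\mb{i}}}$ the coweight parametrization of \cref{defn:Xdef} gives $x_{\Si_{\mb{i}}}$, and for each coordinate $X_j$ one has, using \cref{prop:XtoA}, the definition of $p_M$ in \cref{prop:modifiedensmap}, and the identification of minors with cluster variables from \cref{thm:clusterthm},
\[
p_M^*\bigl(x_{\Si_{\mb{i}}}^*X_j\bigr)=p_M^*X_j=\prod_{k}A_k^{\wt{B}_{jk}}=a_{\Si_{\mb{i}}}^*\bigl(p_G^*X_j\bigr).
\]
Since the $X_j$ generate $\BC(G_{\Ad}^{u,v})$ and $p_M$ is dominant, this pins down $x_{|\Si_{\mb{i}}|}$ as a rational map extending that of \cref{defn:Xdef} and makes the square commute rationally. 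Because the target $G_{\Ad}^{u,v}$ is a separated (indeed affine) variety while each $\CX_{\Si'}$ is a dense open torus in $\CX_{|\Si_{\mb{i}}|}$, it then suffices to prove that the restriction of this rational map to each chart $\CX_{\Si'}$ is regular; the resulting chart-maps agree on the dense common overlaps, so they glue to a morphism on the (non-separated) source.

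The heart of the argument is a uniform regularity statement on an arbitrary chart $\CX_{\Si'}$, obtained by descent through $p_M$ rather than by the explicit parametrization of \cref{prop:x_k}. Fix a seed $\Si'$ mutation equivalent to $\Si_{\mb{i}}$. By \cref{thm:clusterthm} the map $a_{|\Si_{\mb{i}}|}$ is the canonical morphism onto $G^{u,v}\cong\Spec\BC[\CA_{|\Si_{\mb{i}}|}]$, hence regular on all of $\CA_{|\Si_{\mb{i}}|}$; in particular its restriction $a_{\Si'}\colon\CA_{\Si'}\to G^{u,v}$ is regular, and $p_G$ (a biregular automorphism followed by the finite quotient $\pi_G$) is regular and finite. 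Thus for every $\phi\in\BC[G_{\Ad}^{u,v}]$ the function $p_M^*\bigl(x_{|\Si_{\mb{i}}|}^*\phi\bigr)=a_{\Si'}^*\bigl(p_G^*\phi\bigr)$ lies in $\BC[\CA_{\Si'}]$. On this chart $p_M$ is the monomial map of tori with matrix $\wt{B}'=B'+M$; granting $\det\wt{B}'\neq0$ (see below), it is a finite surjection, so $\BC[\CA_{\Si'}]$ is finite, hence integral, over $p_M^*\BC[\CX_{\Si'}]$. Pulling the integral equation back through the injection $p_M^*$ shows $x_{|\Si_{\mb{i}}|}^*\phi$ is integral over $\BC[\CX_{\Si'}]$; since it lies in $\BC(\CX_{\Si'})$ and the torus $\CX_{\Si'}$ is normal, it lies in $\BC[\CX_{\Si'}]$. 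This holds for all $\phi$, yielding a ring homomorphism $\BC[G_{\Ad}^{u,v}]\to\BC[\CX_{\Si'}]$, i.e.\ a regular map $\CX_{\Si'}\to G_{\Ad}^{u,v}$ (note $G_{\Ad}^{u,v}$ is affine, being a finite quotient of the affine $G^{u,v}$ of \cref{prop:fdimdbc}), which automatically lands in $G_{\Ad}^{u,v}$ rather than merely its closure.

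It remains to verify $\det\wt{B}'\neq0$ for every seed. On the initial seed $\det\wt{B}\neq0$: by \cref{prop:XtoA}, $p_G^*(X_j)=\prod_k A_k^{\wt{B}_{jk}}$ with $p_G$ finite dominant of equal dimension and the $A_k$ algebraically independent, so the lattice map $\wt{B}$ is nondegenerate (this is the nondegeneracy recorded in the remark following \cref{thm:ensthm}). Invariance under mutation is a short check: since $M$ is supported on frozen-frozen entries and mutation is performed at an unfrozen index $k$, one reads off directly from \cref{eq:matmut} that $\wt{B}'=\mu_k(B)+M=\mu_k(\wt{B})$, i.e.\ passing to $\wt{B}$ commutes with matrix mutation. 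As matrix mutation factors as $\mu_k(\wt{B})=E\,\wt{B}\,F$ with $\det E=\det F=-1$, this gives $\det\wt{B}'=\det\wt{B}\neq0$ on every chart.

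The step I expect to be the main obstacle is precisely the upgrade from a rational to a regular map on the \emph{deep} charts: a priori $x_{|\Si_{\mb{i}}|}$ is only rational, and on a chart obtained by many mutations the explicit coweight parametrization becomes unwieldy, so the computation behind \cref{prop:x_k} does not extend directly. The device that resolves this is to avoid computation entirely and descend regular functions through the finite monomial cover $p_M$, exploiting normality of the tori $\CX_{\Si'}$ together with the already-established regularity of $a_{|\Si_{\mb{i}}|}$ on the whole $\CA$-space from \cref{thm:clusterthm}; the one genuinely new input this requires is the chart-wise nondegeneracy $\det\wt{B}'\neq0$, supplied by the mutation-invariance computation above.
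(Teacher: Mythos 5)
Your proof is correct and follows essentially the same route as the paper: reduce to regularity on each chart $\CX_{\Si'}$, then descend the regularity of $a'=a_{|\Si_{\mb{i}}|}|_{\CA_{\Si'}}$ through the finite surjection $p'_M$ using the commutative square (the paper phrases the descent as $\BC(\CX_{\Si'}) \cap \BC[\CA_{\Si'}] = \BC[\CX_{\Si'}]$ inside $\BC(\CA_{\Si'})$ via surjectivity of $p'_M$, where you use integrality plus normality of the torus --- these are interchangeable here). Your explicit verification that $\wt{B}'=\mu_k(B)+M=\mu_k(\wt{B})$ stays nondegenerate on every chart supplies a detail the paper leaves implicit when it asserts that $p'_M$ is surjective.
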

\begin{proof} It follows from \cref{prop:XtoA} that $p_M$ is well-defined and that there is a rational map $x_{|\Si_{\mb{i}}|}$ making the diagram commute.  Let $\Si'$ be any seed mutation equivalent to $\Si_{\mb{i}}$ and let $x'$ be the restriction of this rational map to $\CX_{\Si'}$; it will follow that $x_{|\Si_{\mb{i}}|}$ is regular if we show that each such $x'$ is regular. 

We have a commutative diagram 


\vspace{-2mm}
\[
\begin{tikzcd}
\CA_{\Si'} \arrow{r}{a'} \arrow[two heads]{d}{p'_M} & G^{u,v} \arrow[two heads]{d}{p_G} \\
\CX_{\Si'} \arrow[dashed]{r}{x'} & G_{\Ad}^{u,v},
\end{tikzcd}
\]
\noindent where $a'$ is the restriction of $a_{|\Si_{\mb{i}}|}$ to $\CA_{\Si'}$.  If we pull back $\BC[G^{u,v}_{\Ad}]$ along $x' \circ p'_M$ to the function field $\BC(\CA_{\Si'})$, we see that its image is contained in $\BC(\CX_{\Si'})$.  On the other hand,  if we perform the same pullback along $p_G \circ a'$, we see that the image of $\BC[G^{u,v}_{\Ad}]$ is contained in $\BC[\CA_{\Si'}]$.  Since $p'_M$ is surjective, any rational function on $\CX_{\Si'}$ which pulls back to a regular function on $\CA_{\Si'}$ must have been regular on $\CX_{\Si'}$.  Thus the intersection of $\BC(\CX_{\Si'})$ and $\BC[\CA_{\Si'}]$ in $\BC(\CA_{\Si'})$ is exactly $\BC[\CX_{\Si'}]$.  Thus $x'$ pulls back $\BC[G^{u,v}_{\Ad}]$ to $\BC[\CX_{\Si'}]$, hence is regular.
\end{proof}

\subsection{Poisson Brackets of $\CX$-coordinates}\label{sec:Poisson}

We now complete the proof of \cref{thm:ensthm}, demonstrating that the map $x_{|\Si_{\mb{i}}|}: \CX_{|\Si_{\mb{i}}|} \to G^{u,v}_{\Ad}$ is Poisson.  First we recall some rudiments of Poisson-Lie theory \cite{Chari1995}.

Any symmetrizable Kac-Moody group $G$ is a Poisson ind-algebraic group in a canonical way \cite{Williams2012}.  That is, its coordinate ring is equipped with a continuous Poisson bracket such that the multiplication map $G \times G \to G$ is Poisson.  The double Bruhat cells of $G$ are Poisson subvarieties, and on any given double Bruhat cell $H$ acts transitively on the set of symplectic leaves by left multiplication.  This standard Poisson structure is characterized by the fact that the maps
\[
\varphi_i:SL_2^{d_i} \to G
\]
are Poisson.  Here $SL_2^{d_i}$ refers to the following Poisson-Lie structure on $SL_2$: if we write
\[
SL_2 = \left\{ \begin{pmatrix} A & B \\ C & D \end{pmatrix} : AD-BC =1 \right\},
\]
then the brackets of the coordinate functions on $SL_2^{d_i}$ are given by
\begin{gather*}
\{B,A\} = \frac{d_i}{2} AB, \quad \{B,D\} = -\frac{d_i}{2}BD, \quad \{B,C\} = 0, \\
\{C,A\} = \frac{d_i}{2}AC, \quad \{C,D\} = -\frac{d_i}{2}CD, \quad \{D,A\} = d_i BC.
\end{gather*}
The Cartan subgroup of $G$ is a Poisson-Lie subgroup endowed with the trivial Poisson structure.  Then since the kernel of $G \to G_{\Ad}$ is a discrete subgroup of $H$, $G_{\Ad}$ in turn inherits the standard Poisson structure from $G$.

\begin{thm}\label{prop:Poissonmap}
The regular map $x_{|\Si_{\mb{i}}|}: \CX_{|\Si_{\mb{i}}|} \to G_{\Ad}^{u,v}$ defined in \cref{prop:mainX} is Poisson.\footnote{In finite type this is the result of \cite[Proposition 3.11]{Fock2006}.}
\end{thm}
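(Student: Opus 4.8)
The plan is to reduce the statement to a single explicit bracket computation on the initial torus, and then to show that this computation is insensitive to the passage from the semisimple to the Kac-Moody setting.

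First I would reduce to the initial torus. By construction $x_{|\Si_{\mb{i}}|}$ restricts on the coordinate torus $\CX_{\Si'}$ of any seed $\Si'$ mutation equivalent to $\Si_{\mb{i}}$ to the corresponding parametrization, and these restrictions are related by the cluster transformations \cref{eq:Xtrans}, which intertwine the canonical Poisson brackets on the tori (\cref{sec:clusters}). Since the transition maps are thus Poisson birational isomorphisms, each $x_{\Si'}$ is the composite of $x_{\Si_{\mb{i}}}$ with such a map; so if $x_{\Si_{\mb{i}}}$ is Poisson then every $x_{\Si'}$ is Poisson on a dense open subset, hence (being regular by \cref{prop:mainX}) Poisson on all of $\CX_{\Si'}$, and therefore $x_{|\Si_{\mb{i}}|}$ is Poisson. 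As $x_{\Si_{\mb{i}}}$ is an open immersion onto a dense subset of $G_{\Ad}^{u,v}$ (\cref{prop:fincov}), the claim becomes the identity
\[
\{X_j, X_k\} = b_{jk}\, d_k\, X_j X_k,
\]
where the bracket is the standard one on $G_{\Ad}^{u,v}$, the $X_i$ are viewed as rational functions through the coweight parametrization of \cref{defn:Xdef}, and $b_{jk}$ is the entry of the exchange matrix of \cref{def:wordseed}.

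Next I would set up the computation using multiplicativity. The map $x_{\mb{i}}$ is an ordered product of elementary factors of two kinds: coweight factors $X_k \mapsto X_k^{\om_{|i_k|}^\vee} \in H_{\Ad}$, and constant generators $E_{i_k} = x_{i_k}(1) \in \varphi_{|i_k|}(SL_2)$. Because the standard structure is multiplicative, $H$ carries the trivial bracket, and each $\varphi_i\colon SL_2^{d_i} \to G$ is Poisson, the Leibniz rule expresses $\{X_j, X_k\}$ as a sum of a \emph{diagonal} part, coming from the bracket within each factor and supported on the $E$-factors through the $SL_2^{d_i}$ relations, and an \emph{off-diagonal} part, coming from the interaction of a factor lying to the left with one lying to the right in the product. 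The off-diagonal cross terms are controlled by the classical $r$-matrix defining the standard structure, contracted against the left and right logarithmic derivatives of the individual factors.

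Finally I would argue that this reduces to the finite-type case. Every factor of $x_{\mb{i}}$ lies in the subgroup generated by $H$ and the real simple root subgroups $\varphi_{|i_k|}$; when the $r$-matrix is paired with the derivatives of the coweight and simple-root factors, only its Cartan part and the components $e_\al \wedge f_\al$ attached to the finitely many real roots that actually occur can contribute, and the imaginary root spaces never enter. Hence the entire computation takes place within the finite-dimensional data attached to $H$ and the $\varphi_i$, and is formally identical to the semisimple computation of \cite[Proposition 3.11]{Fock2006}; carrying it out yields exactly the matrix $B$ of \cref{def:wordseed}. I expect the main obstacle to be precisely this bookkeeping: organizing the accumulated cross terms according to the relative order of the factors, the signs $\ep_k$, and the successor indices $k^+$, matching the outcome to the conditional formula for $b_{jk}$, while verifying cleanly that the imaginary-root contributions drop out so that the finite-type argument transfers without change.
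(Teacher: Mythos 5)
Your reduction to the initial torus is fine (the paper gets there even more cheaply, just by density of $\CX_{\Si_{\mb{i}}}$ in $\CX_{|\Si_{\mb{i}}|}$), and you have correctly identified the target identity $\{X_j,X_k\}_G = b_{jk}d_kX_jX_k$. But the computational route you propose diverges from the paper's, and the divergence matters. The paper does not decompose $x_{\mb{i}}$ into coweight factors and constant generators $E_{i_k}$ and then contract against an $r$-matrix. Instead it regroups the factors: each block $P_k^{\al^\vee_{|i_k|}}(P_k^{-1}Q_k^{\ep_k})^{\om^\vee_{|i_k|}}E_{i_k}(P_kQ_k^{-\ep_k})^{\om^\vee_{|i_k|}}$ is the image under $\varphi_{|i_k|}$ of an element of the positive or negative Borel of $SL_2^{d_{|i_k|}}$, which is a finite-dimensional Poisson \emph{subgroup}. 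The resulting map $m_{\mb{i}}:H\times B_{i_1}\times\cdots\times B_{i_m}\to G^{u,v}_{\Ad}$ is then Poisson for free, being a composition of Poisson inclusions with the (Poisson) multiplication map, and the factors depend on disjoint coordinate pairs $(P_k,Q_k)$ whose only nonzero brackets are $\{P_k,Q_k\}=\tfrac{d_{|i_k|}}{2}P_kQ_k$. The whole theorem then reduces to pulling back $X_j$ along $m_{\mb{i}}$ (an explicit Laurent monomial in the $P_k,Q_k$) and computing a finite bracket. No $r$-matrix, no cross terms, and no imaginary roots ever appear.

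The soft spot in your version is precisely the step you defer to "bookkeeping." Your factorization does not consist of Poisson maps: the constant map to $E_{i_k}=x_{i_k}(1)$ is Poisson only if the bivector of the standard structure vanishes there, and it does not (already in $SL_2$, $\{B,A\}=\tfrac{d}{2}AB\neq 0$ at that point). So you cannot argue factor-by-factor; you must genuinely expand $\pi_G(g_1\cdots g_N)$ via the cocycle property and contract the classical $r$-matrix against left- and right-translated tangent vectors. In the Kac-Moody setting $r=\sum_{\al>0}e_\al\ot f_\al+(\text{Cartan part})$ is an infinite sum including imaginary root vectors, and the adjoint action of a general group element does not preserve the span of the real root spaces, so the claim that "the imaginary root spaces never enter" is exactly the assertion that needs proof --- and it is the same kind of difficulty that forces the paper to reprove \cref{lem:regularconj} rather than exponentiate Lie algebra elements as in finite type. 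Your plan is salvageable, but as written it assumes the hard part; the paper's regrouping into Borel Poisson subgroups is the device that makes the computation finite and makes the finite-type argument transfer.
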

\begin{proof}
Since $\CX_{\Si_{\mb{i}}}$ is dense in $\CX_{|\Si_{\mb{i}}|}$, it suffices to check that the original map $\CX_{\Si_{\mb{i}}} \to G_{\Ad}^{u,v}$ is Poisson.  Thus if $\{,\}_G$ denotes the restriction of the standard Poisson bracket on $G_{\Ad}^{u,v}$, we must check that
\[
\{X_j,X_k\}_G = b_{jk}d_kX_j X_k
\]
for all $j,k \in I$.  We recall that the upper and lower Borel subgroups of $SL_2^{d}$ are Poisson subgroups.  For $1 \leq k \leq m$ let $B_{i_k}$ denote the positive Borel subgroup of $SL_2^{d_{|i_k|}}$ if $\ep_k = 1$, and its negative Borel subgroup if $\ep_k = -1$.  There is then a Poisson map
\[
m_{\mb{i}}: H \times B_{i_1} \times \cdots \times B_{i_m} \to G^{u,v}_{\Ad}
\]
given by the maps $\varphi_{|i_k|}$ and multiplication in $G_{\Ad}$, and whose image coincides with $\CX_{\Si_{\mb{i}}}$.  We define coordinates $P_k,Q_k$ on each $B_{i_k}$ by
\[
B_{i_k} = 
\left\{ \begin{pmatrix} P_k & Q_k \\ 0 & P_k^{-1} \end{pmatrix}:  (P_k,Q_k) \in \BC^* \times \BC \right\}
\]
for $\ep_k = +1$ and
\[
B_{i_k} = \left\{ \begin{pmatrix} P_k & 0 \\ Q_k & P_k^{-1} \end{pmatrix}:  (P_k,Q_k) \in \BC^* \times \BC \right\}
\]
for $\ep_k = -1$.  In either case the Poisson bracket on $H \times B_{i_1} \times \cdots \times B_{i_m}$ is given by
\[
\{P_j,Q_k\} = \frac{d_{|i_k|}}{2}P_kQ_k\de_{jk}.
\]

Since $m_{\mb{i}}$ is dominant and Poisson, the brackets among the $X_i$ are determined by the brackets of their pullbacks along $m_{\mb{i}}$.  Moreover, since the coordinate functions on $H$ are Casimirs, it suffices to consider the restrictions of these pullbacks to $B_{i_1} \times \cdots \times B_{i_m}$.  

Note that
\begin{align*}
\varphi_{|i_k|}(B_{i_k}) & = P^{\al_{|i_k|}^\vee}_k(P_k^{-1}Q_k^{\ep_k})^{\om_{|i_k|}^\vee}E_{i_k}(P_kQ_k^{-\ep_k})^{\om_{|i_k|}^\vee}\\
& = \biggl( \prod_{\substack{j \neq |i_k| \\ 1 \leq j \leq \wt{r}}} P_k^{C_{|i_k|,|i_j|}\om_{j}^\vee} \biggr) (P_kQ_k^{\ep_k}))^{\om_{|i_k|}^\vee}E_{i_k}(P_kQ_k^{-\ep_k})^{\om_{|i_k|}^\vee}.
\end{align*}
Then writing out $m_{\mb{i}}$ explicitly and comparing with \cref{defn:Xdef} one obtains
\[
m_{\mb{i}}^*X_j = (P_jQ_j^{-\ep_j})^{[j>0]}(P_{j^+}Q_{j^+}^{\ep_{j^+}})^{[j^+ \leq m]}\biggl( \prod_{\substack{j<k<j^+ \\ k>0}}P_k^{C_{|i_k|,|i_j|}} \biggr).
\]
But now one can check directly that
\begin{align*}
\frac{\{X_j,X_k\}_G}{X_jX_k} & = \ep_j d_k [j = k^+] - \ep_k d_k[j^+ = k] + \ep_j d_j \frac{C_{kj}}{2}[k<j<k^+][j>0] \\
& \quad - \ep_{j^+} d_j \frac{C_{kj}}{2}[k<j^+<k^+][j^+\leq m] - \ep_k d_k \frac{C_{kj}}{2}[j<k<j^+][k>0] \\
& \quad + \ep_{k^+} d_k \frac{C_{kj}}{2}[j<k^+<j^+][k^+\leq m]\\
& = b_{jk}d_k.
\end{align*}
\end{proof}

\bibliography{CEKMRevised} 
\bibliographystyle{alpha}

\end{document}